\documentclass[11pt,a4paper]{article}
\pdfoutput=1

\usepackage[utf8]{inputenc}
\usepackage[T1]{fontenc}
\usepackage[english]{babel}
\usepackage{microtype}
\usepackage{amsmath}
\usepackage{amsfonts}
\usepackage{amssymb}
\usepackage{amsthm}
\usepackage{amscd}
\usepackage{latexsym}
\usepackage{thmtools}
\usepackage{lmodern}
\usepackage{stmaryrd}
\usepackage{MnSymbol}
\usepackage{graphicx}
\usepackage{extarrows}
\usepackage{mathtools}
\usepackage[center]{caption}
\usepackage{subfig}
\usepackage{mathrsfs}
\usepackage{mathtools}
\usepackage{slashed}
\usepackage{amsthm}
\usepackage{pifont}
\usepackage{fancyhdr}

\usepackage{hyperref}
\hypersetup{
    colorlinks=true,
    linkcolor=blue,
    urlcolor=RoyalBlue,
    citecolor=ForestGreen
    }
    
\usepackage[noabbrev,capitalize,nameinlink]{cleveref}
\crefname{prop}{Proposition}{Propositions}
\crefname{fact}{Fact}{Facts}

\usepackage[shortlabels]{enumitem} 
\setlist[enumerate,1]{label={\upshape\alph*)}}
\setlist[enumerate,2]{label={\upshape\roman*)}}

\usepackage{xfrac}
\usepackage{graphicx}
\usepackage[dvipsnames]{xcolor} 
\usepackage{slashed}
\usepackage[autostyle=true]{csquotes}
\usepackage[all,cmtip]{xy}
\usepackage{cancel} 
\usepackage{xparse}

\usepackage[a4paper,
            top=2.5cm,
            bottom=2.5cm,
            inner=3.5cm,
            outer=3.5cm]{geometry}

\usepackage[citestyle=alphabetic,
            bibstyle=alphabetic,
            backend=bibtex,
            url=true,
            doi=true,
            isbn=false,
            giveninits=true]{biblatex}

\bibliography{secondary_index}
\AtEveryBibitem{%
  \ifentrytype{article}{
    \clearfield{url}%
    \clearfield{urldate}%
  }{}
  \ifentrytype{book}{
    \clearfield{url}%
    \clearfield{urldate}%
  }{}
  \ifentrytype{incollection}{
    \clearfield{url}%
    \clearfield{urldate}%
  }{}
} 

\declaretheorem[name=Theorem,numberwithin=section]{thm}
\declaretheorem[name=Lemma,numberlike=thm]{lem}
\declaretheorem[name=Corollary,numberlike=thm]{cor}
\declaretheorem[name=Proposition,numberlike=thm]{prop}

\declaretheorem[name=Definition,numberlike=thm,style=definition,qed=\(\lozenge\)]{defn}

\declaretheorem[name=Remark,numberlike=thm,style=definition,qed=\(\lozenge\)]{rem}

\newtheorem{thmintro}{Theorem}

\newcommand{\fB}{\mathfrak{B}}
\newcommand{\fD}{\mathfrak{D}}
\newcommand{\fC}{\mathfrak{C}}
\newcommand{\fBH}{\mathfrak{B}(H)}
\newcommand{\sF}{\mathscr{F}}
\newcommand{\RR}{\mathbb{R}}

\newcommand{\olO}{\overline{\Omega}}
\newcommand{\olC}{\overline{C}}
\newcommand{\dO}{\partial\Omega}
\newcommand{\dom}{\mathrm{dom}}
\newcommand{\im}{\mathrm{im}}
\newcommand{\id}{\mathrm{id}}
\newcommand{\Ad}{\mathrm{Ad}}
\newcommand{\secInd}{\mathrm{Ind}_\mathrm{sec}^\Gamma}
\newcommand{\Ind}{\mathrm{Ind}^\Gamma}
\newcommand{\supp}{\mathrm{supp}}

\newcommand{\Dirac}{\slashed{D}}

\newcommand{\gsim}{\gtrsim}
\newcommand{\loc}{\mathrm{loc}}

\numberwithin{equation}{section} 

\author{Matti Lyko\thanks{Institut f{\"u}r Mathematik und Informatik der Universit{\"a}t Greifswald, Walther-Rathenau-Str.\,47, 17489 Greifswald, Germany, 
\href{mailto:matti.lyko@uni-greifswald.de}{matti.lyko@uni-greifswald.de}}
}

\title{A secondary index map on relative $K$-homology}

\begin{document}

\maketitle

\begin{abstract}
  We introduce a secondary index map from relative K-homology to the K-theory of a relative structure algebra. We prove an algebraic Mayer-Vietoris sequence for the relative structure algebra, and prove compatibility of the secondary index map with the Mayer-Vietoris boundary map. As an application, we compute the secondary index of the Dirac operator on an even-dimensional spin manifold with collared metric having uniformly positive scalar curvature on the boundary. This computation leads to a refinement of the delocalized APS-index theorem. 
\end{abstract}

\setcounter{tocdepth}{2}
\tableofcontents

\paragraph{Acknowledgements.}
This article arose out of my dissertation, written at the University of Greifswald under the supervision of Alexander Engel. I want to thank him for his constant support, both of the mathematical and the general variety. I furthermore acknowledge financial support by the Deutsche Forschungsgemeinschaft DFG (grant EN 1163/7-1, project number 510244757, Uniformly elliptic boundary value problems).

\section{Introduction}

Coarse index theory provides a powerful tool to study existence and properties of metrics of uniformly positive scalar curvature (upsc) on non-compact spin manifolds \cite{Roe1996}. For $X$ such a manifold, the basic set-up is this: There is an exact sequence
\begin{equation}\label{intro_les}
  K_{*+1}\left( D^*(X)\right) \longrightarrow K_*\left(X\right)  \xlongrightarrow{\mathrm{Ind}} K_*\left(C^*(X)\right) \; ,
\end{equation}
where $K_*(X)$ is the analytic K-homology of $X$, and $C^*(X)$ and $D^*(X)$ denote the \emph{Roe algebra} and \emph{structure algebra} of $X$, respectively, and $\mathrm{Ind}$ is the \emph{coarse index map}. The Dirac operator $\Dirac$ over $X$ defines a class in $K_*(X)$. If $X$ carries a upsc-metric, then the coarse index $\mathrm{Ind}(\Dirac)$ vanishes. By exactness, the K-homology class of $\Dirac$ then lifts to the structure algebra. In fact, there is a canonical lift, called the \emph{higher $\rho$-invariant} $\rho(\Dirac) \in K_{*+1}(D^*(X))$. 

The coarse index map is a powerful obstruction to upsc, and the higher $\rho$-invariant helpful in distinuishing different upsc-metrics on $M$. Even if one is interested in compact manifolds only, coarse index theory still provides value: For a compact manifold with fundamental group $\Gamma$, one can pass to its universal cover. There is an equivariant counterpart of the exact sequence \eqref{intro_les}, which is compatible with the action of $\Gamma$ by deck transformations. In this article, the $\Gamma$-equivariant objects will be indicated by adorning their non-equivariant counterpart with a superscript $(-)^\Gamma$. The equivariant Roe algebra of the universal cover is Morita-equivalent to the (reduced) group $C^*$-algebra $C^*\Gamma$. Generally, the equivariant index is a strictly finer obstruction to positive scalar curvature than the Fredholm index on the base space.

The present article deals specifically with the positive scalar curvature on manifolds with boundary. In order for the existence question for psc-metrics on such manifolds to be interesting, one has to impose boundary conditions on the metric (see e.g. \cite{BaerHanke2023}). The easiest such boundary condition is that the metric have product structure near the boundary. We will call such metrics \emph{collared}. Note that any collared psc-metric induces a psc-metric on the boundary. To detect whether a given (compact) manifold $\olO$ with boundary admits a collared psc-metric, Chang, Weinberger and Yu \cite{Chang2015} introduced a relative Baum-Connes map\footnote{Chang, Weinberger and Yu actually work with Real K-theory. Moreover, they generally consider the maximal relative group $C^*$-algebra, unless the inclusion induces an injection $\pi_1(\dO) \hookrightarrow \pi_1(\olO)$, in which case they also consider the reduced group $C^*$-algebra. For the sake of an efficient introduction, we will gloss over these distinctions.}
$$
  K_*(\olO,\dO) \longrightarrow K_*(C^*(\pi_1(\olO),\pi_1(\dO)))
$$
from the relative K-homology of the pair $(\olO,\dO)$ to the K-theory of a relative group $C^*$-algebra. They show that if $\olO$ admits a collared psc-metric, then the image of the relative K-homology class of the Dirac operator under this map vanishes. Under certain conditions on the fundamental groups of $\olO$ and $\dO$, they also show the converse, i.e. that vanishing of the image is sufficient for the existence of this type of psc-metric (see also \cite{Schick2021}).

Given that any collared psc-metric restricts to a psc-metric on the boundary, it is also natural to ask the relative question: If the boundary has a psc-metric, does this metric extend to a collared psc-metric on the entire manifold? An affirmative answer to this question is obstructed by the so-called \emph{delocalized APS-index} $\Ind_\mathrm{APS}(\Dirac_\Omega)$, which is an element in the equivariant Roe algebra of the universal cover $X$ of $\olO$. The delocalized APS-index theorem states that, upon being pushed to the K-theory of $D^*(X)^\Gamma$, the delocalized APS-index coincides with the higher $\rho$-invariant of the boundary, i.e.
$$
  \Ind_\mathrm{APS}(\Dirac_\Omega) = \rho^\Gamma(\Dirac_{\dO}) \; \in \; K_*\left( D^*(X)^\Gamma \right) \; ,
$$
where the respective inclusion maps into $K_*( D^*(X)^\Gamma)$ are left implicit. The delocalized APS-index theorem was proved by Piazza and Schick \cite{Piazza2014} in the even-dimensional case, and by Xie and Yu \cite{Xie2014} in the odd-dimensional case, in both cases assuming $\olO$ to have finite width. It has as corrolary the psc-bordism invariance of the higher $\rho$-invariant. Indeed, the theorem is part of a more general framework of mapping scalar curvature to analysis \cite{Piazza2014}.

The main contribution of the present article is the definition and study of a secondary index map from equivariant relative K-homology to the K-theory of an equivariant relative structure algebra. Let us describe the general set-up first. Let $X$ be a locally compact metric space carrying a free, proper, isometric action by a countable discrete group $\Gamma$, and let $Z \subseteq X$ a closed, invariant subspace. We consider the \emph{relative structure algebra} $D^*(X,Z)^\Gamma$ generated by all equivariant pseudo-local finite-propagation operators over $X$ that are locally compact over $X\setminus Z$. The algebra $D^*(X,Z)^\Gamma$ has been considered previously, though to the author's knowledge never as primary object of interest.

There is a natural isomorphism between the equivariant relative K-homology $K_*^\Gamma(X,Z)$ and the K-theory $K_{*+1}( D^*(X)^\Gamma/D^*(X,Z)^\Gamma )$. We define the \emph{secondary index map} as the composition
$$
\secInd: \; K_*^\Gamma(X,Z) \cong K_{*+1}\left( \frac{D^*(X)^\Gamma}{D^*(X,Z)^\Gamma} \right) \xlongrightarrow{\partial} K_*(D^*(X,Z)^\Gamma) \; .
$$
The relative structure algebra combines secondary information on $Z$ with coarse information on its complement. This hybrid nature manifests in an algebraic Mayer-Vietoris sequence. The boundary map of this sequence is compatible with the boundary map on K-homology. Specifically, we will prove the following.

\begin{thmintro}\label{IntroMV}
  Let $X$ be a locally compact metric space carrying a free, proper, isometric action by a countable discrete group $\Gamma$, and let $Z \subseteq X$ a closed, invariant subspace. There is an algebraic Mayer-Vietoris sequence
  $$
    K_*(C^*(Z)^\Gamma) \to K_*(C^*(X)^\Gamma) \oplus K_*(D^*(Z)^\Gamma) \to K_*(D^*(X,Z)^\Gamma) \xrightarrow{\delta} K_{*-1}(C^*(Z)^\Gamma) \; .
  $$
  Its boundary map fits into the commutative diagram \\
\centerline{
  \xymatrix{
    K^\Gamma_*(X,Z) \ar[r]^\partial \ar[d]_{\secInd} & K^\Gamma_{*-1}(Z) \ar[d]^{\Ind} \\
    K_*(D^*(X,Z)^\Gamma) \ar[r]_\delta & K_*(C^*(Z)^\Gamma)
  }
}
where $\mathrm{Ind}^\Gamma$ denotes the equivariant coarse index map.
\end{thmintro}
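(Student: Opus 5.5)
The plan is to realise $D^*(X,Z)^\Gamma$ as a sum of two closed ideals and then apply the standard Mayer--Vietoris sequence for C$^*$-algebras $A=I+J$ with $I,J$ closed ideals,
$$
K_*(I\cap J)\to K_*(I)\oplus K_*(J)\to K_*(A)\xrightarrow{\delta} K_{*-1}(I\cap J).
$$
Here $\delta$ is the composite $K_*(A)\to K_*(A/I)\cong K_*(J/(I\cap J))\xrightarrow{\partial}K_{*-1}(I\cap J)$. The ideals are:
\begin{itemize}
\item $I=C^*(X)^\Gamma$, which is an ideal in $D^*(X)^\Gamma$ and hence in $D^*(X,Z)^\Gamma$;
\item $J=D^*_X(Z)^\Gamma$, the closure of those operators in $D^*(X,Z)^\Gamma$ that are supported in some $R$-neighbourhood of $Z$.
\end{itemize}
Finite propagation shows that $J$ is an ideal in $D^*(X,Z)^\Gamma$: the product of an operator supported near $Z$ with a finite-propagation operator is again supported near $Z$. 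The intersection $I\cap J$ is the localised Roe algebra $C^*_X(Z)^\Gamma$ of operators supported near $Z$.

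\textbf{Step 1: $D^*(X,Z)^\Gamma=I+J$.} Since a sum of closed ideals is closed, it suffices to decompose a generator $T$ with finite propagation $r$. Choose a $\Gamma$-invariant Lipschitz function $\varphi$ with $\varphi=1$ on the $r$-neighbourhood of $Z$ and support in a slightly larger neighbourhood, and write
$$
T=\varphi T\varphi+(1-\varphi)T+\varphi T(1-\varphi).
$$
\begin{itemize}
\item The first term lies in $J$: it is a product of elements of $D^*(X)^\Gamma$, it is supported near $Z$, and it is locally compact off $Z$.
\item For the last two terms, take a compactly supported $g$. Then $g(1-\varphi)$ is supported away from $Z$, and finite propagation lets me insert a cutoff on the other side. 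Local compactness of $T$ off $Z$ then shows that these terms are locally compact over all of $X$, so they lie in $C^*(X)^\Gamma$.
\end{itemize}

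\textbf{Step 2: identify the localised algebras.} I need $K_*(C^*_X(Z)^\Gamma)\cong K_*(C^*(Z)^\Gamma)$ and $K_*(D^*_X(Z)^\Gamma)\cong K_*(D^*(Z)^\Gamma)$. For the Roe algebra this is the standard argument of Higson--Roe--Yu: the inclusion into an $R$-neighbourhood of $Z$ is a coarse equivalence, and one passes to the direct limit over $R$. For the structure algebra the same strategy applies, using equivariant covering isometries that are pseudolocal, but the neighbourhoods are not homotopy equivalent to $Z$. So I would instead show directly that $D^*_X(Z)^\Gamma$ is the inductive limit of algebras Morita/K-equivalent to $D^*(Z)^\Gamma$, using that pseudolocality off $Z$ becomes vacuous modulo $C^*_X(Z)$. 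I expect this to be the main technical obstacle; if it fails, I would state the sequence with $D^*_X(Z)^\Gamma$ and $C^*_X(Z)^\Gamma$ in place of $D^*(Z)^\Gamma$ and $C^*(Z)^\Gamma$, together with the comparison maps.

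\textbf{Step 3: the square.}
\begin{itemize}
\item By the description of $\delta$ above, $\delta\circ\secInd$ is the composite
$$
K_*(D^*(X)^\Gamma/D^*(X,Z)^\Gamma)\xrightarrow{\partial}K(D^*(X,Z)^\Gamma)\to K(D^*(X,Z)^\Gamma/C^*(X)^\Gamma)\xrightarrow{\partial}K(C^*_X(Z)^\Gamma),
$$
using $D^*(X,Z)^\Gamma/C^*(X)^\Gamma\cong D^*_X(Z)^\Gamma/C^*_X(Z)^\Gamma$.
\item By naturality of boundary maps for the morphism of extensions obtained by dividing out $C^*(X)^\Gamma$, the first two arrows equal the boundary map of
$$
0\to D^*(X,Z)^\Gamma/C^*(X)^\Gamma\to D^*(X)^\Gamma/C^*(X)^\Gamma\to D^*(X)^\Gamma/D^*(X,Z)^\Gamma\to 0.
$$
Under Paschke duality this extension realises the K-homology long exact sequence of the pair, so its boundary is $\partial: K^\Gamma_*(X,Z)\to K^\Gamma_{*-1}(Z)$. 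I would check this by comparing with the Paschke-dual description of $K_*^\Gamma(Z)$ via $D^*_X(Z)^\Gamma/C^*_X(Z)^\Gamma$.
\item The final boundary map, of $0\to C^*_X(Z)^\Gamma\to D^*_X(Z)^\Gamma\to D^*_X(Z)^\Gamma/C^*_X(Z)^\Gamma\to0$, is by definition $\Ind$ on $Z$.
\end{itemize}
Degree conventions and signs under the Paschke isomorphisms must be tracked, and the square may only commute up to a sign fixed by convention.
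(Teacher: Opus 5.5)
Your Steps 1 and 3 follow the paper's argument. Your $J$ is the paper's $D^*(Z\subseteq X,Z)^\Gamma$, and the paper uses the same decomposition $D^*(X,Z)^\Gamma=C^*(X)^\Gamma+D^*(Z\subseteq X,Z)^\Gamma$. It also describes $\delta$ the same way, through $D^*(X,Z)^\Gamma/C^*(X)^\Gamma$, and reduces the square to naturality of boundary maps in the same way. The last identification in Step 3 is not literally ``by definition''. You need the naturality square for the upper-corner inclusion $D^*(Z)^\Gamma\to D^*(Z\subseteq X,Z)^\Gamma$ and its compatibility with the Paschke identifications. With your choice of $\delta$, every step is then a naturality square, so no sign appears.

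The genuine gap is Step 2, and it is exactly what separates the stated sequence from your fallback. You give no mechanism for $K_*(D^*(Z)^\Gamma)\cong K_*(D^*(Z\subseteq X,Z)^\Gamma)$. Each stage of your inductive limit is essentially the relative structure algebra of $(\overline{N_R(Z)},Z)$, and showing it has the K-theory of $D^*(Z)^\Gamma$ is the same problem again. As you note, there is no uniform retraction onto $Z$ to supply an inverse. The missing idea is a five-lemma argument, which is how the paper proves \cref{RelLocalized}. The upper-corner inclusion maps the extension $0\to C^*(Z)^\Gamma\to D^*(Z)^\Gamma\to D^*(Z)^\Gamma/C^*(Z)^\Gamma\to 0$ to $0\to C^*(Z\subseteq X)^\Gamma\to D^*(Z\subseteq X,Z)^\Gamma\to D^*(Z\subseteq X,Z)^\Gamma/C^*(Z\subseteq X)^\Gamma\to 0$.
\begin{itemize}
\item The left map is a K-isomorphism by the Roe-algebra localization.
\item For the right map, truncation identifies the two quotients with $\fD(Z)^\Gamma/\fC(Z)^\Gamma$ and $\fD(X,Z)^\Gamma/\fC(X)^\Gamma$. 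Since $K_*(\fC(\cdot)^\Gamma)=0$, it suffices that $\fD(Z)^\Gamma\to\fD(X,Z)^\Gamma$ is a K-isomorphism.
\item After passing to the quotient $X_\Gamma$, this is Higson's lemma that $K_*(\fD(X,Z))$ is naturally isomorphic to $K_*(\fD(Z))$, i.e.\ Paschke-dually to the K-homology of $Z$. Local compactness off $Z$ is what makes the neighbourhood invisible.
\end{itemize}
This is the same Paschke-dual identification of $K^\Gamma_{*-1}(Z)$ with $K_*(D^*(Z\subseteq X,Z)^\Gamma/C^*(Z\subseteq X)^\Gamma)$ that you already invoke in Step 3, so you had the ingredient in hand. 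Combined with the five lemma, it closes Step 2 without any inductive limit.
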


We will apply this machinery to the situation where $X=\olO$ is a spin manifold with boundary $Z=\dO$, which carries a free, proper, isometric action by the group $\Gamma$, which preserves the boundary. We assume the metric on $\olO$ to be collared, and $\dO$ to have uniformly positive scalar curvature. In this setting, we will reframe the delocalized APS-index theorem using the secondary index map. To motivate this, consider the diagram \\
\begin{equation}\label{IntroCommDiag}
  \xymatrix{
 & K^{\Gamma}_{*}(\olO,\dO) \ar[d]_{\secInd}\ar[r]^-\partial & K^{\Gamma}_{*-1}(\dO)\ar[d]^{\Ind} \\
 K_{*}(C^*(\olO)^\Gamma)\oplus K_{*}( D^*(\dO)^\Gamma) \ar[r] & K_{*}( D^*(\olO,\dO)^\Gamma) \ar[r]^-\delta \ar[d] & K_{*-1}( C^*(\dO)^\Gamma) \\
 & K_{*}( D^*(\olO)^\Gamma) &
}
\end{equation}
where the horizontal row comes from the algebraic Mayer-Vietoris sequence, the middle column from the K-theory long exact sequence, and the square commutes due by \cref{IntroMV}. Since we assume $\dO$ to have upsc, we have $\Ind(\Dirac_{\dO}) = 0$. Thus,
$$
  \delta\left(\secInd(\Dirac_\Omega)\right) = \Ind(\partial [\Dirac_\Omega ]) = \Ind(\Dirac_{\dO}) = 0 
$$
via boundary-of-Dirac-is-Dirac. By exactness, $\secInd(\Dirac_\Omega)$ can therefore be written as the sum of an element of $K_{*}(C^*(\olO)^\Gamma)$ and an element of $K_{*}( D^*(\dO)^\Gamma )$. In this geometric set-up, there are natural candidates for these elements, namely $\Ind_\mathrm{APS}(\Dirac_\Omega)$ and $\rho^\Gamma(\Dirac_{\dO})$. In the case that $\olO$ is even-dimensional, we will show that these elements indeed do the trick.

\begin{thmintro}\label{IntroAPS}
  Let $\olO$ be an even-dimensional spin manifold with boundary, which carries a free, proper, isometric action by the discrete group $\Gamma$. Suppose that the metric on $\olO$ is collared, and that the boundary has uniformly positive scalar curvature. Then,
  $$
    \secInd(\Dirac_\Omega) = \Ind_\mathrm{APS}(\Dirac_\Omega) + \rho^\Gamma(\Dirac_{\dO}) \; .
  $$
\end{thmintro}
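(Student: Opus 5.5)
We prove Theorem~\ref{IntroAPS} at the level of explicit representatives. Let $\olO$ have dimension $2k$. The relative class $[\Dirac_\Omega]\in K_0^\Gamma(\olO,\dO)$ corresponds under the isomorphism $K^\Gamma_*(X,Z)\cong K_{*+1}(D^*(X)^\Gamma/D^*(X,Z)^\Gamma)$ to the symmetry $2\chi(\Dirac)-1$. Here $\chi$ is a normalizing function and $\Dirac$ is any self-adjoint extension of the Dirac operator on the interior, for instance the operator on the manifold $\Omega_\infty$ obtained by attaching an infinite half-cylinder $\dO\times[0,\infty)$. A more convenient choice is the grading-twisted odd unitary $F_+=\chi(\Dirac)_+$ viewed in the quotient. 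Its image under $\partial$ is the index class $\secInd(\Dirac_\Omega)\in K_0(D^*(\olO,\dO)^\Gamma)$, which is computed by the usual idempotent formula $\partial[F_+]=[P_{F}]-[e]$ for a lift $F\in D^*(\olO)^\Gamma$. This identification is the first step.

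The key freedom is the choice of $\chi$ and of the lift. Because $\dO$ has upsc $\geq 4c^2>0$, the boundary operator $\Dirac_{\dO}$ has a spectral gap $(-c,c)$. We choose $\chi$ with $\chi'$ supported in $(-c,c)$ and with $\widehat{\chi'}$ compactly supported (or approximate such a $\chi$ uniformly). We then work on the cylinder-completed manifold $\Omega_\infty$, using that $\olO\subseteq\Omega_\infty$ is a coarse equivalence relative to the boundary. Since $\dO\times[0,\infty)$ and $\dO$ are coarsely equivalent, the relative structure algebras and the Mayer--Vietoris sequence of Theorem~\ref{IntroMV} transfer. Next we split $\chi(\Dirac_{\Omega_\infty})$ with a cutoff $\phi$ that equals $1$ far out on the cylinder and $0$ on a compact-width neighbourhood of the interior. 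This writes the idempotent difference $P_F-e$ as a sum $A+B$ modulo terms that vanish in $K$-theory. The piece $A$ is supported near the interior and is locally compact there; it is exactly the representative of the delocalized APS index, namely the coarse index of the $b$/cylindrical Dirac operator, which defines $\Ind_\mathrm{APS}(\Dirac_\Omega)\in K_0(C^*(\olO)^\Gamma)$. The piece $B$ is supported on the cylinder. On the cylinder $\Dirac=\sigma(\partial_t+\Dirac_{\dO})$, so by the spectral gap $B$ is a product-type operator built from $\chi_t(\Dirac_{\dO})$, and it defines the class obtained from $\rho^\Gamma(\Dirac_{\dO})\in K_1(D^*(\dO)^\Gamma)$. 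The shift in degree is realized by the suspension/half-line isomorphism $K_1(D^*(\dO)^\Gamma)\cong K_0(D^*(\dO\times[0,\infty),\dO)^\Gamma)$-type identification, that is, by the standard relation between the odd $\rho$-invariant of $\dO$ and the even structure class on the cylinder.

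We then verify that the Mayer--Vietoris map $K_0(C^*(\olO)^\Gamma)\oplus K_0(D^*(\dO)^\Gamma)\to K_0(D^*(\olO,\dO)^\Gamma)$, which is a difference or sum of inclusions, sends $(\Ind_\mathrm{APS},\rho^\Gamma)$ to $[A]+[B]$. The additivity $[P_F]-[e]=[A]+[B]$ requires a homotopy: deform the cutoff products $\phi\chi(\Dirac)\phi$ and $(1-\phi)\chi(\Dirac)(1-\phi)$ to a block-diagonal operator, using finite propagation of $\chi(\Dirac)$ and pseudolocality. The cross terms are then locally compact away from $\dO$ and of controlled propagation, so the path stays inside $D^*(\olO,\dO)^\Gamma$.

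The main obstacle is the identification of $B$ with the image of $\rho^\Gamma(\Dirac_{\dO})$. This means matching the cylinder idempotent with the conventions used to define the odd $\rho$-invariant. We would handle it by a separation-of-variables computation: take the functional calculus of $\sigma(\partial_t+\Dirac_{\dO})$ and deform the $\partial_t$-part to a fixed Toeplitz-type operator on $L^2[0,\infty)$. This reduces $B$ to the boundary map of the Toeplitz extension applied to the unitary $\exp(2\pi i\,\chi_t(\Dirac_{\dO}))$, which is by definition the higher $\rho$-invariant. Sign and orientation conventions must be tracked throughout. The fact that everything lands in $D^*$ rather than a localization algebra uses the uniform spectral gap in an essential way.
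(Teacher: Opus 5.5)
Your decomposition cannot produce the $\rho$-term, and the framework you set it in would destroy it.

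Neither $\olO\subseteq\Omega_\infty$ nor $\dO\subseteq\dO\times[0,\infty)$ is a coarse equivalence: a point at height $t$ on the cylinder has distance $t$ from $\dO$. Moving information onto the half-cylinder is also fatal. Its equivariant Roe algebra has vanishing K-theory by flasqueness, and its locally finite K-homology vanishes, so $K_*(D^*(\dO\times[0,\infty))^\Gamma)=0$ and $\rho^\Gamma(\Dirac_{\dO})$ dies there.

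Moreover, $F=\chi(\Dirac_{\Omega_\infty})$ satisfies $1-F^2\in C^*(\olO\subseteq\Omega_\infty)^\Gamma$, because $\Omega_\infty$ has upsc off $\olO$. Its index idempotent is therefore, by definition, $\Ind_\mathrm{APS}(\Dirac_\Omega)$ and nothing more. Far out on the cylinder $F$ is an involution up to negligible error, so your piece $B$ is essentially zero. The cross terms $\phi F(1-\phi)$ sit near $\{R\}\times\dO$, where they are not locally compact, so they lie neither in $D^*(\olO,\dO)^\Gamma$ nor in any other ideal you have. Carried out literally, your scheme yields $\secInd(\Dirac_\Omega)=\Ind_\mathrm{APS}(\Dirac_\Omega)$. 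That fails whenever $\rho^\Gamma(\Dirac_{\dO})$ has nonzero image in $K_0(D^*(\olO,\dO)^\Gamma)$. The root problem is that $\Omega_\infty$ has no boundary at $\dO$, whereas the secondary index records exactly the failure of local compactness at $\dO$.

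The missing idea is to cut sharply at $\Sigma=\dO$ and compare in the localized algebra $D^*(\olO\subseteq M,\Sigma)^\Gamma$. Its K-theory agrees with that of $D^*(\olO,\Sigma)^\Gamma$ by \cref{RelLocalized}. Take $\psi_{\olO},\psi_{\Sigma_\infty}$ to be indicator functions. Then the off-diagonal parts of $\chi(\Dirac_M)$ lie in $D^*(\Sigma\subseteq M,\Sigma)^\Gamma$ (\cref{decomp}(i)), an ideal that tolerates non-local-compactness precisely at $\Sigma$. Hence $\chi(\Dirac_M)$ is block diagonal modulo this ideal. The $\olO$-block represents $[\Dirac_\Omega]$, and the cylinder block represents the relative class of $\Dirac_{\Sigma_\infty}$. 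Applying $\partial$ gives $\Ind_\mathrm{APS}(\Dirac_\Omega)=\secInd(\Dirac_\Omega)+\Ind_\Sigma(\Dirac_{\Sigma_\infty})$. So it is the APS index that splits, with the cylinder term on the other side.

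That cylinder term is computed from the boundary condition $\Dirac^+_{\max}$ on the half-cylinder, whose kernel is $\{e^{-t\Dirac_\Sigma}u_0 : u_0\in P_>L^2(\Sigma)\}$ (\cref{KernelCylinder}). Truncating in $t$ and rescaling homotopes the projection onto this kernel to $P_>$. Orientation reversal then gives $\Ind_\Sigma(\Dirac_{\Sigma_\infty})=-\rho^\Gamma(\Dirac_\Sigma)$.

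Your degree bookkeeping is also off. Since $\Sigma$ is odd-dimensional, $\rho^\Gamma(\Dirac_\Sigma)=[P_>]$ lies in $K_0(D^*(\Sigma)^\Gamma)$ and maps to $K_0(D^*(\olO,\Sigma)^\Gamma)$ by inclusion, with no suspension. The isomorphism $K_1(D^*(\Sigma)^\Gamma)\cong K_0(D^*(\Sigma\times[0,\infty),\Sigma)^\Gamma)$ you invoke is false; the right side is $K_0(D^*(\Sigma)^\Gamma/C^*(\Sigma)^\Gamma)\cong K_1^\Gamma(\Sigma)$. Nor is $\rho^\Gamma$ defined as a Toeplitz boundary of $\exp(2\pi i\chi_t(\Dirac_\Sigma))$. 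A Wiener--Hopf computation could replace \cref{FormulaCylinder}, but only with the correct symbol and only for the block cut at $\Sigma$.
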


From this formula one can deduce the delocalized APS-theorem. Indeed, by exactness of the middle column of the diagram \eqref{IntroCommDiag}, $\secInd(\Dirac_\Omega)$ vanishes upon being pushed to $K_*(D^*(\olO)^\Gamma)$. Hence, we deduce the delocalized APS-theorem
$$
  \Ind_\mathrm{APS}(\Dirac_\Omega) = - \rho^\Gamma(\Dirac_{\dO}) \in K_*\left(D^*(\olO)^\Gamma\right) \; .
$$
Our version of theorem differs from that of \cite{Piazza2014} by a sign stemming from a different choice in orientation, see \cref{SignAPS}. We also note that we do not need to assume $\olO$ to have finite width.

We prove \cref{IntroAPS} via reduction to an explicit computation on a cylinder, which makes use of a representation of the relative class $[\Dirac_\Omega] \in K_*^\Gamma(\olO,\dO)$ via boundary conditions. That sufficiently regular boundary conditions give rise to relative K-homology classes has been proved for first-order operators on compact manifolds by Baum, Douglas and Taylor \cite{BDT1989}. Extensions to first-order operators on non-compact manifolds, and to the minimal boundary condition for higher-order operators on (possibly non-compact) manifolds have been given by Forsyth--Goffeng--Messland--Rennie and Fries respectively \cite{Forsyth2019,Fries2025}. Recently, Engel and the author proved the result for general regular boundary conditions for operators of arbitrary order on possibly non-compact manifolds \cite{Engel2026}.

The explicit computation on which our proof relies is the reason why we prove \cref{IntroAPS} only in even dimensions. In odd dimensions we have found the boundary map to be less amenable to direct computation. However, we expect this to be a limitation of our methods rather than the theorem. We note that Piazza and Schick proved the delocalized APS-theorem only in even dimensions for the same reason. The proof of the odd-dimensional case by Xie and Yu uses localization algebras instead.

We also mention that Zeidler \cite{Zeidler2016} proved a similar strengthening of the delocalized APS-theorem, also using the framework of localization algebras. There, Zeidler equates the difference of the delocalized APS-index and the higher $\rho$-invariant of the boundary in the K-theory of the relative structure algebra with a \emph{$\rho$-invariant of the bordism}. A forteriori, we know this $\rho$-invariant of the bordism to coincide with our secondary index, though Zeidler's construction of this object was non-constructive and relied on the assumption of finite width. Hence, our Theorem \cref{IntroAPS} can also be seen as a conceptual complement to Zeidler's result.

Lastly, we define a \emph{relative index map} 
$$
  \Ind_\mathrm{rel}: \; K^\Gamma_*(X,Z) \longrightarrow K_*\left( C^*(X,Z)^\Gamma \right) \; .
$$
to the K-theory of the relative Roe algebra as the composition of the secondary index map with a map forgetting the secondary information over $Z$. In return for the discarded information, we gain favorable structural properties in the form of a mapping between long exact sequences. Specifically, we will prove the following.

\begin{thmintro}\label{IntroTransform}
  Let $X$ be a locally compact metric space carrying a free, proper, isometric action by a countable discrete group $\Gamma$, and let $Z \subseteq X$ a closed, invariant subspace. There is a commutative diagram \\
  \centerline{\xymatrix{
    K_*^{\Gamma}(Z) \ar[r] \ar[d]_{\Ind} & K^{\Gamma}_*(X) \ar[r] \ar[d]_{\Ind} & K^{\Gamma}_*(X,Z) \ar[r]^\partial \ar[d]_{\Ind_\mathrm{rel}} & K_{*-1}^{\Gamma}(Z) \ar[d]_{\Ind} \\
    K_*\left(C^*(Z)^\Gamma\right) \ar[r] & K_*\left(C^*(X)^\Gamma\right) \ar[r] & K_*\left(C^*(X,Z)^\Gamma\right) \ar[r]_\partial & K_{*-1}\left(C^*(Z)^\Gamma\right)
  }}
\end{thmintro}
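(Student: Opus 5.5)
We have to prove Theorem \ref{IntroTransform}, the commutativity of the ladder. Throughout, $C^*(X,Z)^\Gamma$ denotes the ideal of $C^*(X)^\Gamma$ generated by operators that are locally compact away from $Z$, equivalently $C^*(X)^\Gamma\cap D^*(X,Z)^\Gamma$ (I will take this as the working definition). Then $C^*(Z)^\Gamma\subseteq C^*(X,Z)^\Gamma$, and the bottom row is the long exact sequence of a pair. The natural setup is to realize the bottom row as boundary maps of short exact sequences, and to compare it with the top row using the identifications $K^\Gamma_*(X)\cong K_{*+1}(D^*(X)^\Gamma/C^*(X)^\Gamma)$, $K^\Gamma_*(Z)\cong K_{*+1}(D^*(Z)^\Gamma/C^*(Z)^\Gamma)$ and $K^\Gamma_*(X,Z)\cong K_{*+1}(D^*(X)^\Gamma/D^*(X,Z)^\Gamma)$. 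Under these identifications the long exact sequence of the pair in K-homology is induced by the evident maps of quotients. All the index maps are then boundary maps in six-term sequences, and $\Ind_\mathrm{rel}$ is $\partial$ followed by the map $K_*(D^*(X,Z)^\Gamma)\to K_*(D^*(X,Z)^\Gamma/D^*(Z)^\Gamma)\cong K_*(C^*(X,Z)^\Gamma/C^*(Z)^\Gamma)$. Here I use that the inclusion $C^*(X,Z)^\Gamma/C^*(Z)^\Gamma\to D^*(X,Z)^\Gamma/D^*(Z)^\Gamma$ is an isomorphism; I expect this algebraic excision fact to be the one established on the way to the Mayer-Vietoris sequence of Theorem \ref{IntroMV}, so I will cite it. The identification $K_*(C^*(X,Z)^\Gamma)\cong K_*(C^*(X,Z)^\Gamma/C^*(Z)^\Gamma)$ is not available in general, so instead I define $\Ind_\mathrm{rel}$ more carefully as follows. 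Secondary index is a boundary map to $K_*(D^*(X,Z)^\Gamma)$, and we then apply a "forgetful" map $K_*(D^*(X,Z)^\Gamma)\to K_*(C^*(X,Z)^\Gamma)$. That map is obtained from the Mayer-Vietoris square: $D^*(X,Z)^\Gamma = C^*(X,Z)^\Gamma + D^*(Z)^\Gamma$ up to the standard covering isometry bookkeeping, and one projects away the $D^*(Z)$-part. I will follow whatever definition the paper gives, but commit to the square
\[
\begin{array}{ccc}
C^*(X,Z)^\Gamma & \to & D^*(X,Z)^\Gamma\\
\downarrow && \downarrow\\
C^*(X,Z)^\Gamma/C^*(Z)^\Gamma &\xrightarrow{\cong}& D^*(X,Z)^\Gamma/D^*(Z)^\Gamma
\end{array}
\]
as the key structural input.

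The left square commutes by naturality of the coarse index under the inclusion $Z\hookrightarrow X$. Concretely, the map of short exact sequences from $0\to C^*(Z)\to D^*(Z)\to D^*(Z)/C^*(Z)\to 0$ to the corresponding sequence for $X$ is induced by an equivariant isometry covering the inclusion, so the two boundary maps commute with it.

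For the middle square, consider the morphism of extensions from $0\to C^*(X)^\Gamma\to D^*(X)^\Gamma\to D^*(X)^\Gamma/C^*(X)^\Gamma\to 0$ to $0\to D^*(X,Z)^\Gamma\to D^*(X)^\Gamma\to D^*(X)^\Gamma/D^*(X,Z)^\Gamma\to0$, given by inclusion and the identity on $D^*(X)^\Gamma$. Naturality of boundary maps then shows that $\secInd$ composed with $K^\Gamma_*(X)\to K^\Gamma_*(X,Z)$ equals $\Ind$ followed by $K_*(C^*(X)^\Gamma)\to K_*(D^*(X,Z)^\Gamma)$. It remains to check that the forgetful map composed with this last map is the ordinary map $K_*(C^*(X)^\Gamma)\to K_*(C^*(X,Z)^\Gamma)$, which should be immediate from the construction of the forgetful map.

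The right square is the main obstacle. Here I use Theorem \ref{IntroMV}: $\Ind\circ\partial=\delta\circ\secInd$. So it suffices to show that $\partial\circ\mathrm{forget}=\delta$ as maps $K_*(D^*(X,Z)^\Gamma)\to K_{*-1}(C^*(Z)^\Gamma)$, where the left-hand $\partial$ is the boundary of the pair $C^*(Z)^\Gamma\subseteq C^*(X,Z)^\Gamma$. I would prove this by recalling that the Mayer-Vietoris boundary $\delta$ arises from the extension $0\to D^*(Z)^\Gamma\to D^*(X,Z)^\Gamma\to Q\to0$, with $Q\cong C^*(X,Z)^\Gamma/C^*(Z)^\Gamma$, composed with the boundary $K_*(Q)\to K_{*-1}(C^*(Z)^\Gamma)$ of $0\to C^*(Z)^\Gamma\to C^*(X,Z)^\Gamma\to Q\to0$. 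This is the standard description of the Mayer-Vietoris boundary map for a pullback-type square of ideals. The required equality then reduces to compatibility of the forgetful map with the quotient maps to $Q$, together with naturality of the boundary map. The delicate points are two: sign conventions in $\delta$, and making sure the forgetful map is genuinely well defined, i.e.\ compatible with the quotient to $Q$. I would address the latter by writing elements of $K_*(D^*(X,Z)^\Gamma)$ via Mayer-Vietoris exactness, tracking the $K_*(C^*(X)^\Gamma)\oplus K_*(D^*(Z)^\Gamma)$-parts separately, and checking the identity on each part.
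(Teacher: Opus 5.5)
Your three-square skeleton matches the paper's. The left square is naturality. The middle square comes from the morphism of extensions from $C^*(X)^\Gamma\subseteq D^*(X)^\Gamma$ to $D^*(X,Z)^\Gamma\subseteq D^*(X)^\Gamma$; the paper instead rewrites $\Ind_\mathrm{rel}$ as the boundary map of $0\to C^*(X,Z)^\Gamma\to D^*(X)^\Gamma/D^*(Z\subseteq X,Z)^\Gamma\to D^*(X)^\Gamma/D^*(X,Z)^\Gamma\to 0$, which amounts to the same. The right square is \cref{IndexCommAlgMV} plus an identification of $\partial\circ\sF_*$ with $\delta$.

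\textbf{The gap is in how you set up the objects.} Every element of $C^*(X)^\Gamma$ is locally compact everywhere, so $C^*(X)^\Gamma\cap D^*(X,Z)^\Gamma=C^*(X)^\Gamma$. Your working definition therefore makes $C^*(X,Z)^\Gamma$ the Roe algebra itself, and the bottom row no longer makes sense as stated. In the paper, $C^*(X,Z)^\Gamma$ is the quotient $C^*(X)^\Gamma/C^*(Z\subseteq X)^\Gamma$. The bottom row is the six-term sequence of $0\to C^*(Z\subseteq X)^\Gamma\to C^*(X)^\Gamma\to C^*(X,Z)^\Gamma\to0$ combined with \cref{RoeIncl}. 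Also, $C^*(Z)^\Gamma$ and $D^*(Z)^\Gamma$ are not ideals in $C^*(X)^\Gamma$ and $D^*(X,Z)^\Gamma$. You must work with $C^*(Z\subseteq X)^\Gamma$ and $D^*(Z\subseteq X,Z)^\Gamma$, and invoke \cref{RoeIncl} and \cref{RelLocalized} only at the end.

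Having set things up this way, you discard your correct first description of $\Ind_\mathrm{rel}$. You replace it by ``projecting away the $D^*(Z)$-part'', which does not define a map, because the Mayer--Vietoris sequence does not split $K_*(D^*(X,Z)^\Gamma)$. Your proposed check on the image of $K_*(C^*(X)^\Gamma)\oplus K_*(D^*(Z)^\Gamma)$ carries no information. Indeed, $\delta$ vanishes on that image by exactness, while the right square only has content on classes with $\delta\neq0$.

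The fix is to keep your first description: $\sF$ is the composite $D^*(X,Z)^\Gamma\to D^*(X,Z)^\Gamma/D^*(Z\subseteq X,Z)^\Gamma\cong C^*(X)^\Gamma/C^*(Z\subseteq X)^\Gamma=C^*(X,Z)^\Gamma$. Then your $Q$ is the relative Roe algebra itself, and no identification $K_*(C^*(X,Z)^\Gamma)\cong K_*(Q)$ is needed. The restriction of $\sF$ to $C^*(X)^\Gamma$ is the quotient map, which finishes your middle square. Moreover, $\partial\circ\sF_*$ is literally the composite you wrote down for $\delta$.

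\textbf{That last point is where the sign you flagged bites, and it does not go away.} Put $A=D^*(X,Z)^\Gamma$, $I=C^*(X)^\Gamma$, $J=D^*(Z\subseteq X,Z)^\Gamma$, so that $A=I+J$ and $I\cap J=C^*(Z\subseteq X)^\Gamma$. Let $\pi_I\colon A\to A/I$ and $\pi_J\colon A\to A/J$ be the quotient maps. Let $\partial_I$ and $\partial_J$ be the boundary maps of $0\to I\cap J\to I\to I/(I\cap J)\to0$ and $0\to I\cap J\to J\to J/(I\cap J)\to 0$. Your $\delta$ is $\partial_I\circ(\pi_J)_*$, passing through $A/J\cong I/(I\cap J)$. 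By contrast, \cref{IndexCommAlgMV} is proved for \eqref{DescriptionMVboundary}, which is $\partial_J\circ(\pi_I)_*$, passing through $A/I\cong J/(I\cap J)$.

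These two maps sum to zero. Since $A=I+J$, the map $A\to A/I\oplus A/J$ is surjective with kernel $I\cap J$. By naturality, the boundary map of this extension restricts to $\partial_J$ and $\partial_I$ on the two summands, after the identifications above. By exactness it annihilates the image of $K_*(A)$. Consequently, with the conventions under which \cref{IndexCommAlgMV} holds, one gets $\partial\circ\Ind_\mathrm{rel}=-\Ind\circ\partial$. The right square commutes on the nose only once a sign is absorbed into one of the boundary maps.

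The paper's proof asserts that $\partial\circ\sF_*$ coincides with the Mayer--Vietoris boundary ``comparing with'' \eqref{DescriptionMVboundary}, and so passes over exactly this point. A complete argument along your lines must fix the sign convention explicitly rather than rely on that identification.
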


\cref{IntroTransform} and the algebraic Mayer-Vietoris sequence of \cref{IntroMV} can be synthesized into the following diagram: \\
\centerline{\xymatrix@C=10pt{
K_*^{\Gamma}(Z) \ar[r] \ar[d]_{\Ind} & K^{\Gamma}_*(X) \ar[r] \ar@{.>}[d]_{\Ind\oplus 0} & K^{\Gamma}_*(X,Z) \ar[r]^\partial \ar[d]_{\secInd} & K_{*-1}^{\Gamma}(Z) \ar[d]_{\Ind} \\ 
K_*(C^*(Z)^\Gamma) \ar[r] \ar@{=}[d] & K_*(C^*(X)^\Gamma)\oplus K_*(D^*(Z)^\Gamma) \ar@{.>}[d]_{\id\oplus 0} \ar[r] & K_*(D^*(X,Z)^\Gamma) \ar[d] \ar[r]^\delta & K_{*-1}(C^*(Z)^\Gamma)\ar@{=}[d] \\
K_*(C^*(Z)^\Gamma) \ar[r] & K_*(C^*(X)^\Gamma) \ar[r] & K_*(C^*(X,Z)^\Gamma) \ar[r]^\partial & K_{*-1}(C^*(Z)^\Gamma)
}}
Here, the third vertical composition is the relative index map $\Ind_\mathrm{rel}: \, K^\Gamma_*(X,Z) \to K_*( C^*(X,Z)^\Gamma )$. The dotted vertical arrows indicate that \emph{not} all small squares containing them commute due to the presence of the summand $K_*(D^*(Z)^\Gamma)$. Were one to omit that summand the entire diagram would commute, but the second row would no longer be exact. The large squares from the first to the third row do however commute. 

The author would not expect there to be more appropriate exact sequence to fill the middle row. Hence, relative structure algebra and secondary index map likely do not themselves fall into a structurally clean framework of a diagram between long exact sequences. Nevertheless, they provide interesting secondary information that is missed by the structurally favorable relative Roe algebra, especially since the group $K_*(C^*(X,Z)^\Gamma)$ has the unfortunate property of vanishing whenever $Z\hookrightarrow X$ is an equivariant coarse equivalence.

We note that Chang, Weinberger and Yu have a very similar diagram of long exact sequences for their relative Baum-Connes map as appears in \cref{IntroTransform}, only that the lower row involves the fundamental group of the boundary in addition to that of the entire manifold. This provides a richer structure, particularly if one is primarily interested in (covers of) compact manifolds. We suspect that one can upgrade the relative structure algebra and secondary index map to incorporate the fundamental group of the boundary as well, so that the relative Baum-Connes map of Chang, Weinberger and Yu factors through this upgraded secondary index map. However, we will not pursue this here.

The present article arose out of the author's dissertation \cite{LykoThesis}, which was concerned with uniform K-homology and uniformly elliptic operators on manifolds with boundary and bounded geometry. There, the secondary index map and its application to the delocalized APS-index theorem were discussed in the uniform setting. Those parts of the dissertation formed the baseline for this article, as, to the author's knowledge, the material does not appear in the literature even in the standard, non-uniform setting.

\paragraph{Structure of this article.}
In \cref{SecRelStructureAlg} we recall some definitions surrounding Roe algebras and structure algebras, define the relative structure algebra, and prove the algebraic Mayer-Vietoris sequence. In \cref{SecSecInd} we define the secondary index map, prove its compatibility with the algebraic Mayer-Vietoris sequence, and define the relative index map into the K-theory of the relative Roe algebra. In \cref{SectionBVPs} we recall the bare minimum about boundary conditions and their relative K-homology classes, which is needed for the proof of our index formula in \cref{SecAPS}.

\section{The relative structure algebra}\label{SecRelStructureAlg}

Let $X$ be a locally compact proper metric space, and $Z$ a closed subspace. Suppose that $X$ carries a free and proper action of a discrete, countable group $\Gamma$ by isometries, and that $Z$ is invariant under this action. Throughout, we fix a very ample $\Gamma$-equivariant representation $\rho$ of $C_0(X)$ on a separable $\Gamma$-Hilbert space $H$\footnote{This means that $(H,\rho)$ is a countable direct sum of copies of a $\Gamma$-equivariant representation $(H',\rho')$ such that $\rho'(f)$ is only compact if $f=0$.}. The representation $\rho$ extends to a representation of the algebra of bounded Borel-measurable functions on $X$. In particular, we can make sense of the projection $P_Z:=\rho(\mathbf{1}_Z)$. Consider the representation $\rho_Z$ on $H_Z:=P_ZH$ defined by $\rho_Z(g)=\rho(f\mathbf{1}_Z)$. We will assume throughout that this is also a very ample representation of $C_0(Z)$.

Recall that an operator $T\in\fBH$ is said to have \emph{finite propagation} if there is $R\geq 0$ so that $\rho(g)T\rho(f)=0$ whenever the supports of $f$ and $g$ have distance at least $R$. The operator $T$ is said to be \emph{pseudo-local} over $X$ if $[\rho(f),T]$ is compact for all $f\in C_0(X)$. Lastly, $T$ is said to be \emph{locally compact} over an open subset $U\subseteq X$ (which may be $X$ itself) if $T\rho(f)$ and $\rho(f)T$ are compact for all $f\in C_0(U)$.

\begin{defn}
  Let $D^*(X)^\Gamma$ denote the $C^*$-algebra generated by all pseudo-local $\Gamma$-invariant finite-propagation operators. Let $D^*(X,Z)^\Gamma$ denote the $C^*$-algebra generated by all pseudo-local $\Gamma$-invariant finite-propagation operators that are locally compact over $X\setminus Z$.
\end{defn}

The equivariant Roe algebra $C(X)^\Gamma$ is recovered as the special case of $D(X,\emptyset)^\Gamma$, while $D^*(X)^\Gamma$ itself is recovered as $D^*(X,X)^\Gamma$. The algebra $D^*(X,Z)^\Gamma$ forms a closed, two-sided $^*$-ideal in $D^*(X)^\Gamma$, and in turn the Roe algebra $C^*(X)^\Gamma$ is such an ideal in $D^*(X,Z)^\Gamma$. While these algebras depend on the choice of very ample representation, their K-theories are independent of it up to canonical isomorphism. Each of these algebras has a non-equivariant counterpart generated by the respective not-necessarily-equivariant operators. We denote these algebras by ommiting the superscript $(-)^\Gamma$.

Recall that a map $\Phi: \, X \to Y$ of proper metric spaces is called \emph{coarse} if it is proper, and for all $R\geq 0$ there is $S\geq 0$ such that $d_X(x,y)\leq R$ implies $d_Y(\Phi(x),\Phi(y))\leq S$. Moreover, $\Phi$ is called \emph{uniform} if it is both continuous and coarse. If $Z\subseteq X$ and $W\subseteq Y$ are closed subspaces, $\Phi$ is called a \emph{uniform map of pairs} if additionally $\Phi(Z)\subseteq W$. The K-theory groups $K_*(D^*(X)^\Gamma)$ are functorial under $\Gamma$-equivariant uniform maps, see e.g. \cite[~Prop. 4.3.24]{Siegel2012}, and the groups $K_*(D^*(X,Z)^\Gamma)$ are functorial under $\Gamma$-equivariant uniform maps of pairs.

We shall also need versions of these algebras that are localized near a closed subspace. Let $A\subseteq X$ be a closed $\Gamma$-invariant subspace. We say that an operator $T\in\fBH$ is \emph{supported near $A$} if there is $R\geq 0$ so that $T\rho(f)=\rho(f)T=0$ whenever the support of $f$ has disntance at least $R$ from $A$. 

\begin{defn}
  Define $D^*(A\subseteq X,Z)^\Gamma$ as the $C^*$-algebra generated by all pseudo-local $\Gamma$-invariant finite-propagation operators that are supported near $A$ and locally compact over $X\setminus Z$.
\end{defn}

Again, we get the localized Roe algebra $C^*(A\subseteq X)^\Gamma$ as the special case $D^*(A\subseteq X, \emptyset)$, and $D^*(A\subseteq X)$ as $D^*(A\subseteq X,X)$. The following lemma is proved in e.g. \cite[~Prop. 4.3.12]{Siegel2012}.

\begin{lem}\label{RoeIncl}
  The inclusion $A\hookrightarrow X$ induces an isomorphism
  $$
    K_*\left( C^*(A)^\Gamma \right) \xlongrightarrow{\sim} K_*\left( C^*(A\subseteq X)^\Gamma \right) \; .
  $$
\end{lem}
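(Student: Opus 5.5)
The plan is to prove that the map induced by the inclusion is an isomorphism by writing $C^*(A\subseteq X)^\Gamma$ as a directed union of Roe algebras of neighbourhoods of $A$. Each of these is coarsely equivalent to $A$, and the pieces can be compared via equivariant isometries covering coarse maps.

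First, for $R\geq 0$, let $A_R=\{x\in X: d(x,A)\leq R\}$. This is a closed $\Gamma$-invariant subspace, since $\Gamma$ acts by isometries and preserves $A$. Set $H_R:=\rho(\mathbf{1}_{A_R})H$ with the restricted representation. I would argue that $C^*(A\subseteq X)^\Gamma$ is the closure of the increasing union of the subalgebras $C^*(A_R)^\Gamma$, where the latter are realized on $H_R\subseteq H$ and extended by zero. Indeed, an operator supported near $A$ with constant $R$ satisfies $T=P_{A_{R'}}TP_{A_{R'}}$ for any $R'>R$. Its compression to $H_{R'}$ is still $\Gamma$-invariant, of finite propagation and locally compact over $A_{R'}$. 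One must check that functions in $C_0(A_{R'})$ extend to $C_0(X)$, which holds by Tietze. Conversely, any operator in $C^*(A_R)^\Gamma$, extended by zero, is supported near $A$. By continuity of K-theory,
$$
K_*\left(C^*(A\subseteq X)^\Gamma\right)\cong \varinjlim_R K_*\left(C^*(A_R)^\Gamma\right).
$$
Here I am assuming, as the paper does for $Z$, that these restricted representations are very ample. If not, I would first stabilise by replacing $H$ with $H\otimes\ell^2(\mathbb{N})\otimes\ell^2(\Gamma)$, which does not change K-theory up to canonical isomorphism.

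Second, I would show that each inclusion $A\hookrightarrow A_R$, and each $A_R\hookrightarrow A_{R'}$, induces an isomorphism on $K_*$ of the equivariant Roe algebras. The inclusion is an equivariant coarse equivalence. A coarse inverse is an equivariant map $p:A_R\to A$ with $d(p(x),x)\leq R+1$. To build it, choose a Borel fundamental domain for the free proper action, pick nearby points of $A$ on it, and extend equivariantly; this yields a Borel equivariant map. Equivariant coarse invariance of $K_*(C^*(-)^\Gamma)$ then follows from the standard construction. One takes an equivariant isometry $V$ covering $p$, which exists by very ampleness. Conjugation $\Ad_V$ gives the map, and the two possible composites are each implemented by an isometry covering a map close to the identity. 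Such an isometry induces the identity on K-theory, via the usual $2\times2$ rotation argument showing that $\Ad_V$ and $\Ad_W$ agree on $K_*$ when $V^*W$ has finite propagation.

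Finally, these isomorphisms are compatible with the connecting maps of the directed system. Hence the limit map $K_*(C^*(A)^\Gamma)\to\varinjlim_R K_*(C^*(A_R)^\Gamma)$ is an isomorphism, and it agrees with the map induced by $A\hookrightarrow X$. I expect the main obstacle to be the equivariant technicalities: constructing equivariant covering isometries on $H_R$ that are compatible with the very-ample hypothesis, and checking that the identification of $K_*(C^*(A\subseteq X)^\Gamma)$ with the direct limit is compatible with the canonical maps. Everything else reduces to the standard coarse-invariance argument.
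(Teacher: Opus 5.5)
Your proposal is correct and is essentially the standard Higson--Roe--Yu argument, in the equivariant form of Siegel's thesis, which the paper cites instead of giving its own proof: you exhaust $C^*(A\subseteq X)^\Gamma$ by the Roe algebras of the closed neighbourhoods $A_R$, pass to the limit in $K$-theory, and apply equivariant coarse invariance to $A\hookrightarrow A_R$.

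One minor repair concerns your fallback for non-ample $P_{A_R}H$. Stabilising by $\ell^2(\mathbb{N})\otimes\ell^2(\Gamma)$ does not address how ampleness actually fails, namely a $\rho$-null relatively open piece (e.g.\ $P_AH=0$ when $A$ is a hypersurface and $H$ is built from $L^2$). Instead, realise $C^*(A)^\Gamma$ on an arbitrary very ample $A$-module mapped into $H$ by a covering isometry. Then replace $A_R$ by the support of $P_{A_R}H$; for $R>0$ this support is still a closed invariant set between $A$ and $A_R$, so the coarse-equivalence step goes through unchanged.
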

  
We denote by $\fD(X)^\Gamma$ the $C^*$-algebra of all equivariant pseudo-local operators over $X$, by $\fC(X)^\Gamma$ that of all equivariant locally compact operators, and by $\fD(X,Z)^\Gamma$ the closed two-sided $^*$-ideal in $\fD(X)^\Gamma$ containing all operators that are additionally locally compact over $X\setminus Z$. We will denote the non-equivariant counterparts of these algebras by omitting the superscript $(-)^\Gamma$. The following lemma can be proved analogously to the non-equivariant setting, see e.g. \cite[~Lemma 1.6]{Higson1995}.

\begin{lem}\label{TrivialKtheory}
  $K_*(\fC(X)^\Gamma)=0$\; .
\end{lem}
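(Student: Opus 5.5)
We prove $K_*(\fC(X)^\Gamma)=0$ by an Eilenberg swindle, following the proof of \cite[~Lemma 1.6]{Higson1995}. The idea is to push operators off to infinity inside $X$. We construct an isometry-valued ``shift to infinity'' that is compatible with the $\Gamma$-action and preserves local compactness. We then show that the direct sum of infinitely many shifted copies of an element is again in $\fC(X)^\Gamma$, which forces every K-theory class to vanish.

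\textbf{Cutting $X$ into pieces.} Since $X$ is proper, we may assume it is non-compact (for compact $X$ and finite $\Gamma$ the argument adapts; in general $X/\Gamma$ need not be compact). Using properness and freeness of the action, we choose a $\Gamma$-invariant, locally finite decomposition of $X$ into bounded Borel pieces. We then arrange these pieces into a sequence of disjoint $\Gamma$-invariant Borel sets $X_0, X_1, X_2, \dots$ that tend to infinity, in the sense that every compact set meets only finitely many of them. This is done by choosing an exhaustion of $X/\Gamma$, or of $X$ itself, and lifting it via invariant sets. Using very ampleness of $H$, we find $\Gamma$-equivariant isometries $V_n\colon H\to \rho(\mathbf 1_{Y_n})H$ with mutually orthogonal ranges. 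Here the $Y_n$ are invariant sets escaping to infinity, and each $V_n$ is built from a covering isometry that moves points a bounded distance only locally.

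\textbf{The swindle.} Given a projection or unitary $T$ in matrices over $\fC(X)^\Gamma$, we form $\tilde T=\sum_n V_nTV_n^*$, a strongly convergent sum. The crucial check is that $\tilde T$ is again locally compact. For $f\in C_0(X)$, the compact support of $f$ meets only finitely many $Y_n$, so $\rho(f)\tilde T=\sum_{n\le N}\rho(f)V_nTV_n^*$. Each term must be compact, which requires $V_n^*\rho(f)V_n$ to essentially act as multiplication by a compactly supported function. We therefore choose $V_n$ covering proper maps (as in Higson–Roe), so that $\rho(f)V_n=V_n\rho(f\circ\phi_n)+\text{compact}$. Then $\tilde T\oplus T$ is unitarily equivalent to $\tilde T$ via a shift isometry that lies in the multiplier algebra and preserves $\fC(X)^\Gamma$. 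This gives $[T]+[\tilde T]=[\tilde T]$, hence $[T]=0$. Equivariance of all sums is automatic because the $V_n$ are equivariant.

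\textbf{Main obstacle.} The hard step is constructing the equivariant isometries $V_n$ with supports escaping to infinity that \emph{preserve local compactness}. Properness makes the local finiteness work, but $V_n^*\rho(f)V_n$ must be controlled by a compactly supported function, and equivariance must hold at the same time. Our first attempt is to reduce to a $\Gamma$-equivariant identification $H\cong \ell^2(\Gamma)\otimes H_0$ over a fundamental domain. Freeness and properness let us do this locally. We would then carry out the non-equivariant construction of Higson on the quotient and tensor it with $\ell^2(\Gamma)$.
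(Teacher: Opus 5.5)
Your strategy is the one the paper has in mind: its proof only says the lemma follows ``analogously'' to the non-equivariant swindle of \cite[Lemma~1.6]{Higson1995}. When $X/\Gamma$ is non-compact, your sketch can be completed as you indicate, by lifting pieces of $X/\Gamma$ that escape to infinity and using isometries covering proper maps. Beyond that case, however, there is a genuine gap.

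First, your parenthetical claim that the argument ``adapts'' to compact $X$ with finite $\Gamma$ is false. In that case $1\in C_0(X)$ acts as the identity, so $\fC(X)^\Gamma=\mathfrak K(H)^\Gamma$. Its $K_0$ is non-zero; already $K_0(\mathfrak K)=\mathbb Z$ for trivial $\Gamma$. The statement needs non-compactness, and no swindle can work without room at infinity.

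Second, and more seriously, the step you flag as the main obstacle cannot be carried out when $\Gamma$ is infinite but $X/\Gamma$ is compact. This is exactly the universal-cover situation. Then $X=\Gamma K$ for some compact $K$, and every non-empty invariant set $Y$ meets $K$: if $y=gk\in Y$, then $k=g^{-1}y\in Y$. So there is no sequence of disjoint non-empty invariant sets $Y_n$ such that each compact set meets only finitely many of them. There is also no exhaustion of $X/\Gamma$ to lift, and running Higson's construction on the compact quotient is impossible, since there the non-equivariant statement fails by the first point.

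Nor can you repair this by pushing along the group direction instead. Take $X=\Gamma$ and $H=\ell^2\Gamma\otimes H_0$. Let $W_k$ be equivariant isometries with mutually orthogonal ranges covering right translations $x\mapsto xg_k$, where $g_k\to\infty$. Then $P_eW_k=W_kP_{g_k^{-1}}$, so $P_e\bigl(\sum_kW_kTW_k^*\bigr)P_e=\sum_kW_kP_{g_k^{-1}}TP_{g_k^{-1}}W_k^*$ is an orthogonal sum of compact operators. By equivariance each summand has norm $\|P_eTP_e\|$. Hence the sum is not compact unless $P_eTP_e=0$, and so $\sum_kW_kTW_k^*\notin\fC(X)^\Gamma$.

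In the cocompact case you therefore need a different mechanism, or you must add the hypothesis that $X/\Gamma$ is non-compact. Roughly, one would have to spread an operator out within the fibres so that its compression to a fundamental domain becomes small, rather than move its support to infinity. Your proposal supplies neither, so as written it proves the lemma only for non-cocompact actions. The paper's one-line appeal to the non-equivariant case does not address this point either.
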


Fix a locally finite open cover $\{U_i\}_{i\in I}$ of $X$ with uniformly bounded diameters and a subordinate partition of unity $\{\psi_i\}_{i\in I}$ so that 
$$
  \forall \, \gamma \in \Gamma, \; i\in I \; \exists \, j \in J: \gamma U_i = U_j \; , \; \gamma \psi_i = \psi_j \; .
$$
Such data can be obtained for example by pulling back the corresponding data from the quotient $X_\Gamma$ of $X$ by $\Gamma$. There is a truncation map
\begin{equation}\label{TruncMap}
  \mathrm{trunc}: \; \fD(X)^\Gamma \longrightarrow D^*(X)^\Gamma \; , \; T \longmapsto \sum_i \sqrt{\psi_i} T \sqrt{\psi_i} \; .
\end{equation}
It has the property that $T$ and $\mathrm{trunc}(T)$ differ by an equivariant locally compact operator, and it preserves the properties of being locally compact over an open subset, and being supported near a closed one.

\begin{lem}\label{StructureDual} 
  Suppose that $Z\subseteq A$. Then,
  $$
  \frac{D^*(A \subseteq X, Z)^\Gamma}{C^*(A \subseteq X)^\Gamma} = \frac{\fD(X, Z)^\Gamma}{\fC(X)^\Gamma}\; .
  $$
\end{lem}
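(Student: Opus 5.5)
The equality should be read as saying that the natural $^*$-homomorphism
$$
  \iota: \; \frac{D^*(A \subseteq X, Z)^\Gamma}{C^*(A \subseteq X)^\Gamma} \longrightarrow \frac{\fD(X, Z)^\Gamma}{\fC(X)^\Gamma} \; ,
$$
induced by the inclusion $D^*(A\subseteq X,Z)^\Gamma \subseteq \fD(X,Z)^\Gamma$, is an isomorphism. The plan is to show that $\iota$ is well defined and injective, and then that it is surjective; surjectivity is the main obstacle.

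For well-definedness and injectivity, first note the inclusion itself. Every generator of $D^*(A\subseteq X,Z)^\Gamma$ is pseudo-local, equivariant and locally compact over $X\setminus Z$. These properties are preserved under norm limits and algebraic operations, so the generated $C^*$-algebra lies in $\fD(X,Z)^\Gamma$. In the same way $C^*(A\subseteq X)^\Gamma \subseteq \fC(X)^\Gamma$, so $\iota$ is well defined. For injectivity it suffices to prove
$$
  D^*(A\subseteq X,Z)^\Gamma \cap \fC(X)^\Gamma = C^*(A\subseteq X)^\Gamma \; .
$$
The inclusion $\supseteq$ is clear. For $\subseteq$, take $T$ in the intersection and approximate it by a finite-propagation operator $S$ that is supported near $A$ (a polynomial in generators), with $\|T-S\|<\varepsilon$. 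Then $\mathrm{trunc}(T)$ is locally compact, and since $\mathrm{trunc}$ preserves local compactness over $X$, it has finite propagation and is supported near $A$. However, $\mathrm{trunc}(T)$ need not be close to $T$ in norm. It is cleaner to use that $\mathrm{trunc}$ is a contractive linear map: $\|\mathrm{trunc}(T)\|\le\|T\|$, since it is a completely positive map with $\sum_i\psi_i=1$. If $S$ has finite propagation, then $S-\mathrm{trunc}(S)$ is small only after refining the cover, so instead I would use the elementary route: because $\mathrm{trunc}(S)-S$ is locally compact with finite propagation and support near $A$, the difference $\mathrm{trunc}(S)-S$ lies in $C^*(A\subseteq X)^\Gamma$. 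Then
$$
  T \approx S \equiv \mathrm{trunc}(S) \approx \mathrm{trunc}(T) \; ,
$$
where each approximation has error at most $\varepsilon$ and $\mathrm{trunc}(T)\in C^*(A\subseteq X)^\Gamma$. Hence $T$ lies within $2\varepsilon$ of $\mathrm{trunc}(T)+(S-\mathrm{trunc}(S))\in C^*(A\subseteq X)^\Gamma$. Since $\varepsilon$ is arbitrary and the ideal is closed, $T\in C^*(A\subseteq X)^\Gamma$.

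For surjectivity, let $T\in\fD(X,Z)^\Gamma$. The first step is to localize near $A$. Choose an equivariant cutoff $\phi$ with $\phi\equiv 1$ on $A$, supported in the $1$-neighbourhood of $A$, and with Lipschitz constant at most $1$; for example $\phi(x)=\max(0,1-d(x,A))$, which is $\Gamma$-invariant because $A$ is. Then $(1-\phi)$ is supported away from $A$, hence away from $Z\subseteq A$. Since $T$ is locally compact over $X\setminus Z$, $(1-\phi)T$ should be locally compact. Here is the subtle point: $1-\phi$ is not in $C_0(X\setminus Z)$, because it is not compactly supported. Local compactness, however, only requires $\rho(f)(1-\phi)T$ to be compact for $f\in C_0(X)$. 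Writing
$$
  \rho(f)(1-\phi)T \; ,
$$
the function $f(1-\phi)$ lies in $C_0(X\setminus Z)$, so this operator is compact. For compactness of $(1-\phi)T\rho(f)$ one commutes, using pseudo-locality of $T$. Thus $T\equiv \phi T\phi$ modulo $\fC(X)^\Gamma$, again using pseudo-locality for the right-hand factor.

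Finally, apply truncation: $\mathrm{trunc}(\phi T\phi)$ differs from $\phi T\phi$ by a locally compact operator. It has finite propagation, is equivariant and pseudo-local, and is locally compact over $X\setminus Z$, since $\mathrm{trunc}$ preserves these properties. It is supported near $A$, since $\mathrm{trunc}$ preserves that property. Hence it lies in $D^*(A\subseteq X,Z)^\Gamma$ and maps to the class of $T$, which proves surjectivity. I expect the main care to go into verifying that $\mathrm{trunc}$ preserves pseudo-locality, and into the commutator argument showing $(1-\phi)T$ is locally compact; both rest on the stated properties of \eqref{TruncMap}.
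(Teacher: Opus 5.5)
Your route is the paper's: surjectivity from $\fD(X,Z)^\Gamma = D^*(A\subseteq X,Z)^\Gamma + \fC(X)^\Gamma$ via a cutoff and truncation, and injectivity from $D^*(A\subseteq X,Z)^\Gamma\cap\fC(X)^\Gamma = C^*(A\subseteq X)^\Gamma$. Several parts are fine as written:
\begin{itemize}
\item the surjectivity argument, where you correctly note that only $f(1-\phi)\in C_0(X\setminus Z)$ is needed, i.e.\ that $1-\phi$ vanishes on $Z$;
\item the reduction of injectivity to the intersection identity;
\item the bound $\|\mathrm{trunc}\|\le 1$;
\item the observation $S-\mathrm{trunc}(S)\in C^*(A\subseteq X)^\Gamma$.
\end{itemize}

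The step that fails as written is the claim that $\mathrm{trunc}(T)$ is supported near $A$. An element $T\in D^*(A\subseteq X,Z)^\Gamma$ is only a norm limit of operators supported near $A$, and their supporting radii may tend to infinity. So $T$, and hence $\mathrm{trunc}(T)$, need not itself be supported near $A$. Truncation preserves this property only for operators that actually have it. Finite propagation of $\mathrm{trunc}(T)$ does hold, but because the $U_i$ have uniformly bounded diameters, not because of local compactness.

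The conclusion $\mathrm{trunc}(T)\in C^*(A\subseteq X)^\Gamma$ is still true; it just needs one more cutoff.
\begin{itemize}
\item Since $\mathrm{trunc}(S)$ is supported near $A$, choose $R$ so large that $\mathrm{trunc}(S)=P\,\mathrm{trunc}(S)\,P$ with $P=\rho(\mathbf{1}_{N_R(A)})$.
\item Then $\|\mathrm{trunc}(T)-P\,\mathrm{trunc}(T)\,P\|\le 2\|\mathrm{trunc}(T)-\mathrm{trunc}(S)\|\le 2\varepsilon$.
\item $P\,\mathrm{trunc}(T)\,P$ is equivariant (because $N_R(A)$ is $\Gamma$-invariant), locally compact, of finite propagation and supported near $A$. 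Hence it is a generator of $C^*(A\subseteq X)^\Gamma$.
\item $\mathrm{trunc}(T)$ does not depend on $\varepsilon$, and the algebra is closed, so $\mathrm{trunc}(T)\in C^*(A\subseteq X)^\Gamma$.
\end{itemize}
With this, your final estimate $\|T-(\mathrm{trunc}(T)+S-\mathrm{trunc}(S))\|\le 2\varepsilon$ completes the injectivity proof.

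You should also delete the exploratory remarks you subsequently abandon, for instance the one about refining the cover. They play no role in the argument.
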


\begin{proof}
  The properties of the truncation map \eqref{TruncMap} together with use of a cut-off function yield that $\fD(X, Z)^\Gamma = D^*(A \subseteq X, Z)^\Gamma + \fC(X)^\Gamma$, giving one inclusion. The observation that $C^*(A\subseteq X)^\Gamma$ equals the intersection of $D^*(A \subseteq X, Z)^\Gamma$ and $\fC(X)^\Gamma$ gives the reverse inclusion. 
\end{proof}

Using an analogous truncation argument, one also shows the following.

\begin{lem}\label{RelQuot}
  It holds that
  $$
  \frac{D^*(X)^\Gamma}{D^*(X,Z)^\Gamma} = \frac{\fD(X)^\Gamma}{\fD(X,Z)^\Gamma}\; .
  $$
\end{lem}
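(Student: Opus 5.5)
The equality is to be read, as in \cref{StructureDual}, as the statement that the natural $^*$-homomorphism
$$
  \iota: \; \frac{D^*(X)^\Gamma}{D^*(X,Z)^\Gamma} \longrightarrow \frac{\fD(X)^\Gamma}{\fD(X,Z)^\Gamma} \; ,
$$
induced by the inclusion $D^*(X)^\Gamma\subseteq\fD(X)^\Gamma$, is an isomorphism. This map is well-defined because $D^*(X,Z)^\Gamma\subseteq\fD(X,Z)^\Gamma$: every generator of $D^*(X,Z)^\Gamma$ is pseudo-local and locally compact over $X\setminus Z$, and both properties survive norm limits, sums and products. I will prove surjectivity and injectivity separately, following the same two-step pattern as the proof of \cref{StructureDual}.

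For surjectivity I will use the truncation map \eqref{TruncMap}. Given $T\in\fD(X)^\Gamma$, the operator $\mathrm{trunc}(T)$ lies in $D^*(X)^\Gamma$, and $T-\mathrm{trunc}(T)\in\fC(X)^\Gamma$. Since $\fC(X)^\Gamma\subseteq\fD(X,Z)^\Gamma$ (a locally compact operator is pseudo-local and locally compact over every open set), we get
$$
  \fD(X)^\Gamma = D^*(X)^\Gamma + \fD(X,Z)^\Gamma \; ,
$$
so $\iota$ is onto. Here I take for granted the stated properties of $\mathrm{trunc}$: it is well-defined as a strongly convergent sum and produces a finite-propagation operator, equivariant because the cover and partition of unity are permuted by $\Gamma$, and pseudo-local. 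In fact, surjectivity only needs $\mathrm{trunc}(T)\in D^*(X)^\Gamma$.

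For injectivity I must show
$$
  D^*(X)^\Gamma\cap\fD(X,Z)^\Gamma = D^*(X,Z)^\Gamma \; .
$$
The inclusion ``$\supseteq$'' is clear. For ``$\subseteq$'', take $T$ in the intersection. Since $T\in\fD(X,Z)^\Gamma$, the operator $\mathrm{trunc}(T)$ is locally compact over $X\setminus Z$, because truncation preserves local compactness over open subsets. It is also a pseudo-local equivariant finite-propagation operator, hence a generator of $D^*(X,Z)^\Gamma$. It remains to see that the remainder $T-\mathrm{trunc}(T)$, which lies in $D^*(X)^\Gamma\cap\fC(X)^\Gamma$, belongs to $D^*(X,Z)^\Gamma$. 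For this it suffices that $D^*(X)^\Gamma\cap\fC(X)^\Gamma=C^*(X)^\Gamma$, since $C^*(X)^\Gamma\subseteq D^*(X,Z)^\Gamma$. This identity is exactly the observation used in \cref{StructureDual}, in the case $A=X$, $Z=X$.

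I expect this last identity to be the main obstacle, since it is where the $C^*$-closure matters: an element of $D^*(X)^\Gamma$ is only a norm limit of finite-propagation operators. The plan is to approximate $S\in D^*(X)^\Gamma\cap\fC(X)^\Gamma$ in norm by finite-propagation pseudo-local operators $S_n$, and then replace each $S_n$ by $\mathrm{trunc}(S)$-type modifications. The tool is that for $S$ locally compact, $\sum_i\sqrt{\psi_i}S\sqrt{\psi_i}$ is locally compact with finite propagation, hence lies in $C^*(X)^\Gamma$. The difficulty is that $\mathrm{trunc}$ is bounded but only produces the same operator up to a locally compact error, so I cannot apply it directly to get $S$ back. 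Instead I will use the remaining piece of the argument behind \cref{StructureDual} ($S-\mathrm{trunc}(S)$ is locally compact, and a locally compact element of $D^*(X)^\Gamma$ is a limit of locally compact finite-propagation ones). If this proves delicate, I would cite it as the standard fact from the literature (e.g.\ \cite{Siegel2012}) already implicitly invoked in the proof of \cref{StructureDual}.
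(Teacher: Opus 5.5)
Your proposal is correct and matches the paper's terse ``analogous truncation argument''. Truncation gives $\fD(X)^\Gamma=D^*(X)^\Gamma+\fD(X,Z)^\Gamma$, and the reverse inclusion reduces to $D^*(X)^\Gamma\cap\fC(X)^\Gamma=C^*(X)^\Gamma$, which is exactly injectivity in \cref{StructureDual} for $A=Z=X$, so citing that lemma completes the proof.

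The self-contained plan in your last paragraph is circular as written, since it restates the identity, but a direct proof is easy. For $T\in D^*(X)^\Gamma$ one already has $T-\mathrm{trunc}(T)\in C^*(X)^\Gamma$: choose equivariant pseudo-local finite-propagation $T_n\to T$; each $T_n-\mathrm{trunc}(T_n)=\sum_i\sqrt{\psi_i}\,[\sqrt{\psi_i},T_n]$ is locally compact with finite propagation; and $\mathrm{trunc}$ is contractive, being $V^*(T\otimes 1)V$ for the isometry $Vh=\sum_i\sqrt{\psi_i}\,h\otimes e_i$. This gives injectivity immediately.
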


We denote by $X_\Gamma$ the quotient of $X$ by $\Gamma$, and similarly $Z_\Gamma$ and $A_\Gamma$. Note that since $Z$ and $A$ are assumed $\Gamma$-invariant, the quotients $Z_\Gamma$ and $A_\Gamma$ coincide with the image of the respective subspace under the quotient map $X\to X_\Gamma$.

\begin{lem}\label{EquivQuotient}
  There is an isomorphism 
  $$
  \frac{D^*(A\subseteq X,Z)^\Gamma}{C^*(A\subseteq X)^\Gamma} \cong \frac{D^*(A_\Gamma\subseteq X_\Gamma, Z_\Gamma)}{C^*(A_\Gamma \subseteq X_\Gamma)} \; .
  $$
\end{lem}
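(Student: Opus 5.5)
By \cref{StructureDual}, applied on both sides, it suffices to construct an isomorphism
$$
\frac{\fD(X,Z)^\Gamma}{\fC(X)^\Gamma} \cong \frac{\fD(X_\Gamma,Z_\Gamma)}{\fC(X_\Gamma)} \; .
$$
Here the right-hand side is formed with respect to a suitable very ample representation of $C_0(X_\Gamma)$. Since the K-theories do not depend on this choice, we may pick it conveniently. The natural choice comes from a fundamental domain.

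Since the action is free and proper, choose a Borel fundamental domain $F\subseteq X$, chosen so that each $\Gamma$-translate of a small open set meets only finitely many translates of $F$ (local finiteness). Then $H\cong \ell^2(\Gamma)\otimes H_F$ with $H_F:=\rho(\mathbf 1_F)H$, and $H_F$ carries a representation of bounded Borel functions on $X_\Gamma$ via the Borel bijection $F\to X_\Gamma$. This representation is very ample because $\rho$ is. Ampleness of the restriction to $Z_\Gamma$ follows from the assumption on $\rho_Z$.

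Next I would define the comparison map. For a $\Gamma$-invariant $T$, set $\Phi(T):=\rho(\mathbf 1_F)T\rho(\mathbf 1_F)$, viewed as an operator on $H_F$. A cleaner option is to first truncate $T$ to finite propagation using \eqref{TruncMap}, which changes $T$ only by an element of $\fC(X)^\Gamma$. For $f\in C_0(X_\Gamma)$ with small support, lift $f$ locally to $X$. Pseudo-locality of $T$, together with the fact that commutators of $T$ with the lifted, $\Gamma$-periodic function are locally compact, should show that $\Phi(T)$ is pseudo-local over $X_\Gamma$. Local compactness over $X\setminus Z$ then passes to local compactness over $X_\Gamma\setminus Z_\Gamma$, and $\Phi(\fC(X)^\Gamma)\subseteq \fC(X_\Gamma)$.

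\textbf{Main obstacle.} $\Phi$ is not multiplicative. Writing $T=\sum_{\gamma}\gamma\cdot\rho(\mathbf 1_F)T\rho(\mathbf 1_{\gamma F})$, one gets
$$
\Phi(ST)-\Phi(S)\Phi(T)=\sum_{\gamma\ne e}\rho(\mathbf 1_F)S\rho(\mathbf 1_{\gamma F})T\rho(\mathbf 1_F) .
$$
I must show that this error is locally compact over $X_\Gamma$, so that $\Phi$ becomes a $*$-homomorphism modulo $\fC(X_\Gamma)$. Each term is localized near the boundary $\partial F\cap\gamma^{-1}\partial F$ and need not be compact.

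The fix I would try is to replace the sharp cutoff $\mathbf 1_F$ by a smooth $\Gamma$-partition of unity $\{\gamma\cdot\phi\}$ and conjugate by $\sqrt{\phi}$. Pseudo-locality then makes the commutators $[\sqrt{\phi},T]$ compact, and the error terms become compact after multiplication by $C_0$ functions. Working on $X_\Gamma$ requires descending operators, which I would do via the identification $T\mapsto$ the operator on $L^2$-sections of the flat bundle $H$ over $X_\Gamma$. Concretely, the quotient operator is $\sum_\gamma \sqrt{\phi}\,T\,\gamma\sqrt{\phi}$. This is multiplicative modulo compact errors supported over compact sets.

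Finally I would construct the inverse map by lifting. Given an operator $S$ on $X_\Gamma$, of small propagation after truncation, set $\tilde S:=\sum_{\gamma}\gamma(\sqrt{\phi}\,\hat S\sqrt{\phi})\gamma^{-1}$, where $\hat S$ is the local lift of $S$ through the local isometry $X\to X_\Gamma$. Properness and small propagation make this sum strongly convergent. The resulting $\tilde S$ is equivariant, pseudo-local, and locally compact off $Z$ whenever $S$ is locally compact off $Z_\Gamma$. Both composites agree with the identity modulo locally compact operators, by the same commutator estimates. This yields the isomorphism of quotients, and hence the lemma via \cref{StructureDual}.
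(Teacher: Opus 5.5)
Your strategy of truncating and then identifying equivariant operators on $X$ with operators on $X_\Gamma$ through the covering is the paper's. The device you use for the identification, however, has a genuine gap outside the cocompact case.

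Write $U_\gamma$ for the unitary implementing $\gamma$. Your descent $T\mapsto\sum_\gamma\sqrt{\phi}\,T\,U_\gamma\sqrt{\phi}$ is only norm-controlled when uniformly finitely many $\gamma$ contribute. The same applies to your lift built from a ``local lift'' $\hat S$ near $\supp\phi$. Properness and finite propagation give convergence on compactly supported vectors, not boundedness, and the lemma does not assume a cocompact action.

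In fact a bounded equivariant operator of small finite propagation can have unbounded descent. Suppose the local injectivity radius of $X\to X_\Gamma$ degenerates and the short loops near certain points generate free groups of unbounded rank $k$. The descent then evaluates the trivial rather than the regular representation of these local groups, inflating norms by a factor of order $\sqrt{k}$. Truncating with \eqref{TruncMap} does not help unless the cover is adapted to the covering map.

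The missing idea is the paper's choice of a cover of $X_\Gamma$ by evenly covered open sets $U_i$. Truncate with a partition of unity subordinate to this cover, lifted sheetwise, so that each piece lives on a single sheet. Then lifting or descending each piece is a unitary identification. The resulting sums are controlled by the truncation estimate $\|\sum_i\sqrt{\psi_i}\,T\sqrt{\psi_i}\|\le\|T\|$, which follows from Cauchy--Schwarz and $\sum_i\psi_i=1$.

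Two further points. First, the obstacle you flag is not the real one. The compression $\rho(\mathbf 1_F)T\rho(\mathbf 1_F)$ fails to be multiplicative only because it drops the sum over $\gamma$. The sharp descent $\sum_\gamma\rho(\mathbf 1_F)\,T\,U_\gamma\,\rho(\mathbf 1_F)$ is already exactly multiplicative on invariant finite-propagation operators. This follows from $U_\gamma\rho(\mathbf 1_F)U_\gamma^*=\rho(\mathbf 1_{\gamma F})$ and $\sum_\gamma\rho(\mathbf 1_{\gamma F})=I$. The same computation applies to your smoothed version, so no commutator estimates are needed for multiplicativity. Pseudo-locality is needed only to see that descended operators are pseudo-local and that truncation changes $T$ by a locally compact operator. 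The real difficulty is analytic, not algebraic.

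Second, your reduction via \cref{StructureDual} requires $Z\subseteq A$, which the lemma does not assume. For $A=\emptyset$ and $Z\neq\emptyset$ the left-hand side is $0$, while $\fD(X,Z)^\Gamma/\fC(X)^\Gamma$ contains the nonzero class of $P_Z$. This restriction is harmless for every application in the paper. The paper's sheetwise lifting nevertheless handles arbitrary $A$ directly, since it preserves being supported near $A$.
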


\begin{proof}
  Pick a locally finite cover of $X_\Gamma$ by open sets $U_i$ with uniformly bounded diameters such that the pre-image $\tilde{U}_i$ of $U_i$ in $X$ is isomorphic to $U_i \times \Gamma$. Any operator over $X_\Gamma$ with support in $U_i\times U_i$ can then be lifted to a $\Gamma$-equivariant operator over $X$ with support in $\tilde{U}_i \times \tilde{U}_i$. Conversely, any such operator over $X$ arises from a unique operator over $X_\Gamma$ in this way. This procedure preserves the properties of being pseudo-local, being supported near $A$ or $A_\Gamma$, and being locally compact on the complement of $Z$ or $Z_\Gamma$. Finally, the truncation map \eqref{TruncMap} can be used to represent a class in either quotient by the strong limit of a series of operators with supports in some $U_i$ or $\tilde{U}_i$, respectively. For the details we refer to the discussion around Theorem 4.3.25 in \cite{Siegel2012}.
\end{proof}

\begin{prop} \label{RelLocalized}
  Suppose that $Z\subseteq A$. Then, the inclusion $A\hookrightarrow X$ induces an isomorphism
  $$
  K_*\left( D^*(A,Z)^\Gamma \right) \xlongrightarrow{\sim} K_*\left( D^*(A\subseteq X,Z)^\Gamma \right) \; .
  $$
\end{prop}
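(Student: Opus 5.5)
We compare the short exact sequences coming from the Roe ideals. On the side of $A$ there is the extension
$$
  0 \to C^*(A)^\Gamma \to D^*(A,Z)^\Gamma \to D^*(A,Z)^\Gamma/C^*(A)^\Gamma \to 0 \; ,
$$
and on the side of $X$ the extension
$$
  0 \to C^*(A\subseteq X)^\Gamma \to D^*(A\subseteq X,Z)^\Gamma \to D^*(A\subseteq X,Z)^\Gamma/C^*(A\subseteq X)^\Gamma \to 0 \; .
$$
The inclusion $A\hookrightarrow X$ is realized by an equivariant isometry $V\colon H_A \to H$ covering the inclusion. Here we may take $V$ to be the inclusion $P_AH\subseteq H$, since $\rho_A$ is assumed very ample; otherwise we choose a covering isometry with propagation zero. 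Conjugation by $V$ maps pseudo-local, finite-propagation, locally compact over $A\setminus Z$ operators to operators of the same type on $H$ that are supported near $A$. Local compactness over $X\setminus Z$ holds because for $f\in C_0(X\setminus Z)$ we have $\rho(f)VTV^*=V\rho_A(f|_A)TV^*$, and $f|_A\in C_0(A\setminus Z)$. Hence $\Ad_V$ yields a morphism of the two extensions. We then apply the five lemma to the induced map of six-term exact sequences. On the ideals, \cref{RoeIncl} gives an isomorphism $K_*(C^*(A)^\Gamma)\cong K_*(C^*(A\subseteq X)^\Gamma)$, so it remains to show that $\Ad_V$ induces an isomorphism on the K-theory of the quotients. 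We will in fact show it is a $^*$-isomorphism on the quotients.

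For the quotients we use \cref{StructureDual}, once for the pair $(A,Z)$ with $A=A$ and once for $X$:
$$
  \frac{D^*(A,Z)^\Gamma}{C^*(A)^\Gamma}=\frac{\fD(A,Z)^\Gamma}{\fC(A)^\Gamma}\; , \qquad \frac{D^*(A\subseteq X,Z)^\Gamma}{C^*(A\subseteq X)^\Gamma}=\frac{\fD(X,Z)^\Gamma}{\fC(X)^\Gamma}\; .
$$
The claim is that compression $T\mapsto P_ATP_A$, viewed as an operator on $H_A$, is an inverse to $\Ad_V$ on these quotients.

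The first step is to show that for $T\in\fD(X,Z)^\Gamma$ the operator $T - P_ATP_A$ is locally compact over $X$. Let $f\in C_c(X)$. The piece $\rho(f)(1-P_A)TP_A$ is handled through pseudo-locality. Write $f=f\mathbf 1_{X\setminus Z}+f\mathbf 1_Z$. Away from $Z$, local compactness over $X\setminus Z$ does the job. Near $Z\subseteq A$, the commutator $[\rho(f),T]$ is compact, so $(1-P_A)\rho(f)TP_A \equiv (1-P_A)T\rho(f)P_A$. Here one must use that pseudo-locality extends to bounded Borel functions modulo compacts on the region where this is needed.

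This is the main obstacle: pseudo-locality does not directly give compactness of $(1-P_A)TP_A\rho(f)$ for indicator functions. I would first try the following route. Choose a continuous cutoff $\chi$ that equals $1$ on $Z$ and is supported in a small neighbourhood. Then $(1-\chi)$ lies in $C_0(X\setminus Z)$ locally, so the $(1-\chi)$-part is locally compact. For the $\chi$-part, split $T$ near $Z$ relative to $A$ using a continuous function, not $\mathbf 1_A$. If that fails, I would instead redefine the comparison by replacing $\fD(X,Z)^\Gamma/\fC(X)^\Gamma$ with $\fD(\overline{U},Z)^\Gamma/\fC$ for closed neighbourhoods, and appeal to the quotient description via \cref{EquivQuotient} to reduce to a non-equivariant statement on $X_\Gamma$, where Paschke duality gives excision.

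Once $T\equiv P_ATP_A$ modulo $\fC(X)^\Gamma$ is established, the remaining checks are routine. Compression preserves equivariance, pseudo-locality over $A$ (since $\rho_A(g)=P_A\rho(\tilde g)P_A$ for an extension $\tilde g$) and local compactness over $A\setminus Z$. The two maps are mutually inverse $^*$-homomorphisms on the quotients; multiplicativity follows from $TP_AS\equiv TS$ modulo locally compacts. The five lemma then concludes the proof.
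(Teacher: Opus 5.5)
Your skeleton (a morphism of extensions, \cref{RoeIncl} on the ideals, then the five lemma) is the same as the paper's. However, the step you use for the quotients is false, not merely hard to prove. The congruence $T\equiv P_ATP_A$ modulo $\fC(X)^\Gamma$ for $T\in\fD(X,Z)^\Gamma$ fails in general when $Z$ meets the closure of $X\setminus A$. In particular it fails for $A=Z$, which is the case the paper actually uses (\cref{RelLocalizedAisZ}, \cref{AlgebraicMV}).

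For a counterexample, take $\Gamma$ trivial, $X=\RR$, $Z=\{0\}$, and $A=Z$ or $A=[0,\infty)$. Let $H$ be a countable sum of copies of $L^2(\RR)\oplus\ell^2$, with $C_0(\RR)$ acting on $\ell^2$ by evaluation at $0$. Let $Q$ be the projection onto the closed span of orthonormal vectors $\phi_n$ in one copy of $L^2(\RR)$ with $\supp\phi_n\subseteq(-2^{-n},-2^{-n-1})$.
\begin{enumerate}
\item For $h\in C_0(\RR\setminus\{0\})$ one has $\|\rho(h)Q_N\|\leq\sup_{(-2^{-N},0)}|h|\to 0$, where $Q_N$ projects onto the span of the $\phi_n$ with $n\geq N$. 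Hence $\rho(h)Q$ and $Q\rho(h)$ are compact.
\item $Q$ commutes with $\rho(\chi)$ for $\chi\in C_c(\RR)$ equal to $1$ on $[-1,1]$. Writing $f=f(0)\chi+h$ shows that $Q$ is pseudo-local.
\item Thus $Q\in\fD(X,Z)$, yet $P_AQP_A=0$ while $Q\rho(\chi)=Q$ is not compact.
\end{enumerate}
Worse, since $1-P_A$ commutes with $\rho$, the operator $(1-P_A)K(1-P_A)$ is locally compact for every $K\in\fC(X)$. On the other hand, $(1-P_A)(Q-VSV^*)(1-P_A)=Q$ for every $S\in\fB(H_A)$. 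So the class of $Q$ is not in the image of $\Ad_V$, and the map $\fD(A,Z)/\fC(A)\to\fD(X,Z)/\fC(X)$ is not surjective. No inverse can exist: neither compression nor your continuous-cutoff variant, which produces operators supported in a neighbourhood of $A$ rather than on $A$. (Compression is not even multiplicative here: $QP_AQ=0\neq Q^2$.) Your argument does go through when $Z$ lies in the interior of $A$, but that excludes the case of interest.

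Consequently the map on quotients is only a K-theory isomorphism, and what is missing is a genuinely K-theoretic input. The paper supplies it in three steps:
\begin{enumerate}
\item it uses \cref{StructureDual} and \cref{EquivQuotient} to reduce to the non-equivariant situation;
\item it uses \cref{TrivialKtheory} to replace the quotients by $\fD(A,Z)$ and $\fD(X,Z)$ themselves;
\item it invokes the excision fact that both of these have K-theory naturally isomorphic to $K_*(\fD(Z))$ \cite[Lemma~2.2]{Higson1995}. This is a statement of Paschke-duality type, not an identification of algebras.
\end{enumerate}
Your fallback sentence points toward this route. But it has to replace the compression step rather than supplement it, and as written it is not carried out. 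So the proposal does not yet establish the proposition.
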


\begin{proof}
  The proof is an essentially verbatim adaptation of the proof of \cite[Prop.~4.3.34]{Siegel2012}. The extension by zero of operators over $A$ to operators over $X$ gives rise to the diagram \\
  \centerline{\xymatrix{
    0 \ar[r] & C^*(A)^\Gamma \ar[r] \ar[d] & D^*(A,Z)^\Gamma \ar[r] \ar[d] & \frac{D^*(A,Z)^\Gamma}{C^*(A)^\Gamma} \ar[r] \ar[d] & 0 \\
    0 \ar[r] & C^*(A\subseteq X)^\Gamma \ar[r] & D^*(A\subseteq X,Z)^\Gamma \ar[r] & \frac{D^*(A\subseteq X,Z)^\Gamma}{C^*(A\subseteq X)^\Gamma} \ar[r] & 0
  }}
  Once we show that the first and third vertical arrow are K-theory isomorphisms, the claim will follow from the Five lemma applied to the associated diagram between K-theory long exact sequences. The first vertical arrow is a K-theory isomorphism by \cref{RoeIncl}. To see that third vertical arrow is a K-theory isomorphism as well, note that it fits into the diagram\\
  \centerline{\xymatrix{
    D^*(A,Z)^\Gamma/C^*(A)^\Gamma \ar[r] \ar[d] & \fD(A,Z)^\Gamma/\fC(A)^\Gamma \ar[d] \\
    D^*(A\subseteq X,Z)^\Gamma/C^*(A\subseteq X)^\Gamma \ar[r] & \fD(X,Z)^\Gamma/\fC(X)^\Gamma 
  }}
  The horizontal arrows are isomorphisms by \cref{StructureDual}. The right-vertical arrow is a K-theory isomorphism as well. Indeed, through a combination of \cref{StructureDual} and \cref{EquivQuotient} it suffices to consider the non-equivariant situation. Via \cref{TrivialKtheory} it further suffices that $\fD(A,Z)\hookrightarrow \fD(X,Z)$ induces a K-theory isomorphism, which in turn follows from the fact that both algebras have K-theory naturally isomorphic to $\fD(Z)$ \cite[Lemma~2.2]{Higson1995}. We conclude that $D^*(A,Z)^\Gamma/C^*(A)^\Gamma \to D^*(A\subseteq X,Z)^\Gamma/C^*(A\subseteq X)^\Gamma$ is a K-theory isomorphism as well, finishing the proof. 
\end{proof}

For the case $A=Z$, \cref{RelLocalized} specializes to give the following.

\begin{cor}\label{RelLocalizedAisZ}
  The inclusion $Z\hookrightarrow X$ induces an isomorphism
  $$
  K_*\left( D^*(Z)^\Gamma \right) \xlongrightarrow{\sim} K_*\left( D^*(Z\subseteq X,Z)^\Gamma \right) \; .
  $$
\end{cor}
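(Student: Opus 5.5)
This is the special case $A=Z$ of \cref{RelLocalized}, so the plan is to check that the hypotheses hold and that both sides are the algebras named in the statement. The only hypothesis of \cref{RelLocalized} is $Z\subseteq A$, which is trivially satisfied for $A=Z$. The proposition then gives that extension by zero along $Z\hookrightarrow X$ induces an isomorphism $K_*(D^*(Z,Z)^\Gamma)\to K_*(D^*(Z\subseteq X,Z)^\Gamma)$.

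It remains to identify the domain. By the definitions, $D^*(Z,Z)^\Gamma$ is built on the very ample representation $\rho_Z$ on $H_Z$, which we assumed throughout. It is generated by pseudo-local $\Gamma$-invariant finite-propagation operators over $Z$ that are locally compact over $Z\setminus Z=\emptyset$. That condition is vacuous, so $D^*(Z,Z)^\Gamma=D^*(Z)^\Gamma$, as already noted after the definition ($D^*(X)^\Gamma=D^*(X,X)^\Gamma$).

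Substituting this identification finishes the argument. There is no genuine obstacle. The one point to keep track of is that the map in the corollary is the same one as in \cref{RelLocalized}, namely extension by zero of operators on $H_Z$ to operators on $H$. This extension lands in $D^*(Z\subseteq X,Z)^\Gamma$: it preserves pseudo-locality and finite propagation, the extended operators are supported near $Z$, and they vanish over $X\setminus Z$, so in particular they are locally compact there.
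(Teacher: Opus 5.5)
Your proposal is correct and matches the paper: the corollary is the specialization $A=Z$ of \cref{RelLocalized}, using that $D^*(Z,Z)^\Gamma=D^*(Z)^\Gamma$ because local compactness over $Z\setminus Z=\emptyset$ is vacuous. Your added check that extension by zero lands in $D^*(Z\subseteq X,Z)^\Gamma$ is a harmless extra detail that the paper leaves implicit.
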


In the case that $Z\subseteq X$ is $c$-dense for some $c$, i.e. $N_c(Z)=X$, this allows us to identify the K-theory of the relative structure algebra.

\begin{cor}\label{IsoRelBdry}
  Suppose that $N_c(Z)=X$ for some $c\geq 0$. Then, the inclusion $Z\hookrightarrow X$ induces an isomorphism
  $$
  K_*\left( D^*(Z)^\Gamma \right) \xlongrightarrow{\sim} K_*\left( D^*(X,Z)^\Gamma \right) \; .
  $$
\end{cor}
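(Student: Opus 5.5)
The plan is to reduce the statement to \cref{RelLocalizedAisZ} by showing that, when $N_c(Z)=X$, the localized algebra $D^*(Z\subseteq X,Z)^\Gamma$ is all of $D^*(X,Z)^\Gamma$. The inclusion-induced map $K_*(D^*(Z)^\Gamma)\to K_*(D^*(X,Z)^\Gamma)$ factors as
$$
  K_*\left(D^*(Z)^\Gamma\right) \longrightarrow K_*\left(D^*(Z\subseteq X,Z)^\Gamma\right) \longrightarrow K_*\left(D^*(X,Z)^\Gamma\right) \; .
$$
The first arrow is an isomorphism by \cref{RelLocalizedAisZ}. So it suffices to see that the second arrow, induced by the inclusion of $C^*$-algebras $D^*(Z\subseteq X,Z)^\Gamma\subseteq D^*(X,Z)^\Gamma$, is the identity.

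The key step is the observation that when $N_c(Z)=X$, every operator $T\in\fBH$ is supported near $Z$. Take $R>c$. If $f\in C_0(X)$ has support at distance at least $R$ from $Z$, then $\supp f\subseteq X\setminus N_c(Z)=\emptyset$. Hence $f=0$, and trivially $T\rho(f)=\rho(f)T=0$.

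Consequently, the generating sets coincide. These are the pseudo-local $\Gamma$-invariant finite-propagation operators that are locally compact over $X\setminus Z$, with or without the condition of being supported near $Z$. The generated $C^*$-algebras are therefore equal, $D^*(Z\subseteq X,Z)^\Gamma = D^*(X,Z)^\Gamma$, and the second arrow above is the identity. Composing with the isomorphism from \cref{RelLocalizedAisZ} gives the claim.

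I do not expect a genuine obstacle; the result is essentially a remark. The only point to check is that the map in the statement, induced by extension by zero from $H_Z$ to $H$, is literally the composite of the map of \cref{RelLocalizedAisZ} with the inclusion, so that no extra identification intervenes. This holds by construction in the proof of \cref{RelLocalized}.
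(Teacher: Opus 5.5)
Your proposal is correct and follows the paper's route. The paper presents this as an immediate consequence of \cref{RelLocalizedAisZ}: when $N_c(Z)=X$, being supported near $Z$ is automatic (take any $R>c$), so $D^*(Z\subseteq X,Z)^\Gamma=D^*(X,Z)^\Gamma$, and the inclusion-induced map is exactly the isomorphism of that corollary.
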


More generally, there is an algebraic Mayer-Vietoris sequence expressing the K-theory of the relative structure algebra of $(X,Z)$ in terms the uniform structure algebra of $Z$, as well as the Roe algebras of $X$ and $Z$.

\begin{thm}\label{AlgebraicMV}
There is an algebraic Mayer-Vietoris sequence\\
\centerline{\xymatrix{
K_0\left(C^*(Z)^\Gamma\right) \ar[r] & K_0\left(C^*(X)^\Gamma\right)\oplus K_0\left(D^*(Z)^\Gamma\right) \ar[r] & K_0\left(D^*(X,Z)^\Gamma\right) \ar[d]^\delta \\
K_1\left(D^*(X,Z)^\Gamma\right) \ar[u]^\delta & K_1\left(C^*(X)^\Gamma\right)\oplus K_1\left(D^*(Z)^\Gamma\right) \ar[l] &  K_1\left(C^*(Z)^\Gamma\right) \ar[l]
}}
\end{thm}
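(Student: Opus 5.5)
The plan is to obtain the sequence from a pullback (or "Mayer--Vietoris for ideals") argument at the level of $C^*$-algebras. Set $A=N_R(Z)$ or simply work with $D^*(Z\subseteq X,Z)^\Gamma$. Consider the two closed ideals
$$
I_1 = C^*(X)^\Gamma \; , \qquad I_2 = D^*(Z\subseteq X,Z)^\Gamma
$$
inside $D^*(X,Z)^\Gamma$. The intersection $I_1\cap I_2$ is $C^*(Z\subseteq X)^\Gamma$, since an operator that is supported near $Z$ and locally compact everywhere is exactly a generator of the localized Roe algebra; this uses the intersection observation already made in the proof of \cref{StructureDual}. The standard Mayer--Vietoris sequence for two closed ideals $I_1,I_2$ of a $C^*$-algebra then yields a six-term exact sequence
$$
K_*(I_1\cap I_2)\to K_*(I_1)\oplus K_*(I_2)\to K_*(I_1+I_2)\to K_{*-1}(I_1\cap I_2) \; .
$$
Finally, \cref{RoeIncl} identifies $K_*(C^*(Z\subseteq X)^\Gamma)\cong K_*(C^*(Z)^\Gamma)$, and \cref{RelLocalizedAisZ} identifies $K_*(D^*(Z\subseteq X,Z)^\Gamma)\cong K_*(D^*(Z)^\Gamma)$. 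It remains to check the compatibility of these identifications with the maps, which is naturality of extension by zero.

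The main step is to show $I_1+I_2 = D^*(X,Z)^\Gamma$. Since a sum of closed ideals in a $C^*$-algebra is closed, it suffices to decompose each generator $T$. Let $T$ be pseudo-local, $\Gamma$-invariant, of finite propagation, and locally compact over $X\setminus Z$. Choose a $\Gamma$-invariant cut-off $\chi$: a bounded continuous function with $\chi=1$ on $N_1(Z)$ and support in $N_2(Z)$. We want $\chi$ to be uniformly continuous, for instance $\chi(x)=\max(0,\min(1,2-d(x,Z)))$, which is $1$-Lipschitz and invariant because $Z$ is invariant.

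Write $T=\rho(\chi)T\rho(\chi) + \big(T-\rho(\chi)T\rho(\chi)\big)$. The first summand is supported near $Z$, has finite propagation, is invariant and pseudo-local, and remains locally compact off $Z$, so it lies in $I_2$. The second summand equals $\rho(1-\chi)T + \rho(\chi)T\rho(1-\chi)$. Here $1-\chi$ is supported away from $Z$, which is where we expect local compactness to enter.

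This is the obstacle. Local compactness over $X\setminus Z$ gives compactness of $T\rho(f)$ only for $f\in C_0(X\setminus Z)$, whereas $1-\chi$ is not compactly supported. However, a locally compact condition is itself tested only against $f\in C_0(X)$. For such $f$, the product $f(1-\chi)$ lies in $C_0(X\setminus Z)$, since its support is contained in $X\setminus N_1(Z)$, so $\rho(f)\rho(1-\chi)T$ is compact. For the other side, $\rho(1-\chi)T\rho(f)$, we use pseudo-locality to commute $\rho(f)$ past $T$ at the cost of a compact operator, and then apply the previous case.

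For the term $\rho(\chi)T\rho(1-\chi)$, we argue the same way after noting $[\rho(\chi),T]$. That commutator need not be compact, since $\chi\notin C_0$. It is, however, locally compact: multiplied by $\rho(f)$ it becomes compact, by pseudo-locality and because $\chi f\in C_0$. Thus both terms are locally compact, have finite propagation and are invariant, hence lie in $C^*(X)^\Gamma = I_1$. I expect the careful verification of local compactness in this step to be the only delicate point; everything else is bookkeeping.

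If the $C^*$-algebraic ideal Mayer--Vietoris sequence is not taken as standard, I would derive it from the isomorphism $I_2/(I_1\cap I_2)\cong (I_1+I_2)/I_1$. Applying the five lemma to the map between the long exact sequences of the two extensions, the usual Barratt--Whitehead construction then produces the sequence with boundary map $\delta$.
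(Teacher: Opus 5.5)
Your proposal is correct and follows the paper's proof. You use the same decomposition $D^*(X,Z)^\Gamma = C^*(X)^\Gamma + D^*(Z\subseteq X,Z)^\Gamma$ with intersection $C^*(Z\subseteq X)^\Gamma$, the same Mayer--Vietoris sequence for two ideals, and the same identifications via \cref{RoeIncl} and \cref{RelLocalizedAisZ}; your explicit cut-off computation, including the observation that $[\rho(\chi),T]$ is locally compact though not compact, spells out the step the paper summarizes as ``using cutoff functions''.
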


\begin{proof}
  Using cutoff functions one shows that the algebra $D^*(X,Z)^\Gamma$ can be written as $D^*(X,Z)^\Gamma=C^*(X)^\Gamma + D^*(Z\subseteq X,Z)^\Gamma$. Both summands are closed two-sided $^*$-ideals, and their intersection is given by $C^*(Z\subseteq X)^\Gamma$. Application of \cref{RoeIncl} and \cref{RelLocalized} to the corresponding Mayer-Vietoris sequence on K-theory concludes the proof.
\end{proof}

\begin{rem}\label{CoarseEqMV}
  The algebraic Mayer-Vietoris sequence allows us to extend the conclusion of \cref{IsoRelBdry} to the case where $Z$ is coarsely equivalent to $X$, but not necessarily $c$-dense. Indeed, if $Z\hookrightarrow X$ is a coarse equivalence of $\Gamma$-spaces, the map $K_*(C^*(Z)^\Gamma) \to K_*(C^*(X)^\Gamma)$ is an isomorphism. From the Mayer-Vietoris sequence one may then deduce that $K_*(D^*(Z)^\Gamma)\to K_*(D^*(X,Z)^\Gamma)$ is an isomorphism, and that the boundary map $\delta: \,K_*(D^*(X,Z)^\Gamma) \to K_{*-1}(C^*(Z)^\Gamma)$ is zero. The latter fact is a version of bordism invariance of the index, see \cref{BordInvIndex} below.
\end{rem}

\section{The secondary index map}\label{SecSecInd}

As before, let $X$ be a locally compact proper metric space, $Z$ a closed subspace, and suppose that $X$ carries a free, proper, and isometric action of a discrete, countable group $\Gamma$ which leaves $Z$ invariant. Again, we let $X_\Gamma$ and $Z_\Gamma$ denote the corresponding quotients.

Let us briefly recall the definition of the relative equivariant K-homology. For simplicity we focus on degrees zero and one. An \emph{ungraded relative equivariant Fredholm module} over $(X,Z)$ is a triple $(H,\rho,F)$ consisting of a $\Gamma$-Hilbert space, an equivariant representation $\rho: \, C_0(X)\to \fBH$, and an equivariant operator $F\in\fBH$ that is pseudo-local over $X$ so that $I-F^2$ and $F-F^*$ are locally compact over $X\setminus Z$. In other words, $F$ is in $\fD(X)^\Gamma$, and is a self-adjoint involution modulo $\fD(X,Z)^\Gamma$. A \emph{graded relative equivariant Fredholm module} is defined analogously, only that it is additionally assumed that $H$ carries a grading compatible with the $\Gamma$-action, that $\rho$ is a representation by even operators, and that $F$ is an odd operator. 

Then, $K_0^\Gamma(X,Z)$ resp. $K_1^\Gamma(X,Z)$ are defined as the groups generated by graded resp. ungraded relative equivariant Fredholm modules under direct sums, modulo the relations of equivariant unitary equivalence and operator-homotopy through relative equivariant Fredholm modules. A brief introduction to relative equivariant K-homology can be found in \cite[~Appendix A]{Forsyth2019}. Generally, definitions and statements for relative equivariant K-homology are equivalents of their non-equivariant counterparts.

\begin{prop}\label{EquivKhom}
  \begin{itemize}
    \item[(i)] There is a isomorphism
      $$
        K_*^\Gamma(X,Z) \cong K_{*+1}\left( \frac{D^*(X)^\Gamma}{D^*(X,Z)^\Gamma}\right) \; ,
      $$
      which is natural with respect to uniform maps of pairs.
    \item[(ii)] There is an isomorphism
      $$
        K_*^\Gamma(X,Z) \cong K_*(X_\Gamma,Z_\Gamma)
      $$
     which is compatible with boundary maps, i.e. the square \\
      \centerline{\xymatrix{
        K^\Gamma_*(X,Z) \ar[r]^\partial \ar[d]_\sim & K^\Gamma_{*-1}(Z) \ar[d]^\sim \\
        K_*(X_\Gamma) \ar[r]_\partial & K_{*-1}(Z_\Gamma)
      }}
      commutes.
  \end{itemize}
\end{prop}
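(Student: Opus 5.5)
Both parts follow the familiar non-equivariant arguments, run once at the level of Fredholm modules and once through the quotient identification of \cref{EquivQuotient}.

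\emph{Part (i).} The approach is Paschke duality. Let $F$ be a relative equivariant Fredholm module realised on the fixed very ample representation $H$. Every module can be stabilised to one on $H$, using that $H$ absorbs equivariant representations. The module then gives an element of $\fD(X)^\Gamma/\fD(X,Z)^\Gamma$.

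\begin{itemize}
  \item In the ungraded case this element is a self-adjoint involution. So $\tfrac12(1+F)$ is a projection in the quotient, giving a class in $K_0$.
  \item In the graded case, the odd part $F_+ : H_+\to H_-$ is a unitary modulo $\fD(X,Z)^\Gamma$, after identifying $H_\pm$ with $H$ by ampleness. This gives a class in $K_1$.
\end{itemize}

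By \cref{RelQuot} the quotient agrees with $D^*(X)^\Gamma/D^*(X,Z)^\Gamma$. Homotopies and unitary equivalences of modules map to homotopies and conjugations, so the map is well defined.

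For the inverse, take a projection (resp. unitary) in matrices over the quotient. Lift it to a self-adjoint element of $\fD(X)^\Gamma$; this is a relative Fredholm module (resp. a graded one after doubling). Ampleness absorbs the matrix size.

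The hard point is showing these two maps are mutually inverse on the level of relations. Here one has to use that $K_*(\fC(X)^\Gamma)=0$ (\cref{TrivialKtheory}) to absorb degenerate modules, exactly as in Higson--Roe's proof of Paschke duality. Naturality under uniform maps of pairs is checked by choosing a covering isometry $V$ for the map, so that conjugation $\Ad_V$ implements both sides.

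\emph{Part (ii).} First, the isomorphism $K_*^\Gamma(X,Z)\cong K_*(X_\Gamma,Z_\Gamma)$ comes from part (i) together with the lifting procedure in the proof of \cref{EquivQuotient}. For a free proper action, an operator over $X_\Gamma$ supported in $U_i\times U_i$ lifts uniquely to an equivariant operator over $X$. This gives a $^*$-isomorphism
$$\fD(X)^\Gamma/\fD(X,Z)^\Gamma\cong \fD(X_\Gamma)/\fD(X_\Gamma,Z_\Gamma),$$
after applying truncation on both sides. Combined with (i), this yields the vertical isomorphisms; the absolute case $Z=X$ or $Z=\emptyset$ is included.

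Next, compatibility with $\partial$. The K-homology boundary map corresponds, under (i), to the K-theory boundary of the extension
$$0\to \frac{D^*(X,Z)^\Gamma}{C^*(X)^\Gamma}\to \frac{D^*(X)^\Gamma}{C^*(X)^\Gamma}\to\frac{D^*(X)^\Gamma}{D^*(X,Z)^\Gamma}\to0.$$
The left term is identified with $D^*(Z)^\Gamma/C^*(Z)^\Gamma$ in K-theory, using \cref{StructureDual} with $A=X$ and the proof of \cref{RelLocalized}.

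The lifting isomorphism is compatible with all three terms of this extension and with the corresponding extension for $(X_\Gamma,Z_\Gamma)$. Naturality of the six-term boundary map then gives commutativity of the square.

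I expect the main obstacle to be verifying that the Paschke-type identification of $\partial$ on Fredholm modules agrees with this K-theoretic boundary, up to the sign conventions. I would first check it on the explicit formula: restrict $F$ to $P_Z H$ and compress.
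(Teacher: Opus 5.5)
Your proposal is correct and follows essentially the same route as the paper. Part (i) is Paschke duality combined with \cref{RelQuot}. Part (ii) transports the extension $0\to\fD(X,Z)^\Gamma/\fC(X)^\Gamma\to\fD(X)^\Gamma/\fC(X)^\Gamma\to\fD(X)^\Gamma/\fD(X,Z)^\Gamma\to 0$ to the corresponding extension over $X_\Gamma$ via \cref{StructureDual} and \cref{EquivQuotient}, and concludes by naturality of the K-theory boundary map.

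The differences are only cosmetic. You sketch Paschke duality where the paper simply cites it. You identify the K-homology boundary directly with the boundary of this quotiented extension, whereas the paper also maps the unquotiented extension $0\to\fD(X,Z)^\Gamma\to\fD(X)^\Gamma\to\fD(X)^\Gamma/\fD(X,Z)^\Gamma\to 0$ into it using \cref{TrivialKtheory}.
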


\begin{proof}
  Ad (i): It holds that 
  $$
    K^\Gamma_*(X,Z) \cong K_{*+1}\left( \frac{\fD(X)^\Gamma}{\fD(X,Z)^\Gamma} \right) = K_{*+1}\left( \frac{D^*(X)^\Gamma}{D^*(X,Z)^\Gamma} \right) \; ,
  $$
  where the first (natural) isomorphism is due to Paschke duality (see e.g. \cite{Higson1995} for the non-equivariant case), and the second equality is due to \cref{RelQuot}.
  
  Ad (ii): Combine \cref{StructureDual} and \cref{EquivQuotient} to see that 
  $$
    \frac{\fD(X,Z)^\Gamma}{\fC(X)^\Gamma} = \frac{D^*(X,Z)^\Gamma}{C^*(X)^\Gamma} \cong \frac{D^*(X_\Gamma,Z_\Gamma)}{C^*(X_\Gamma)} = \frac{\fD(X_\Gamma,Z_\Gamma)}{\fC(X_\Gamma)} \; .
  $$
  Analogously, one finds that $\fD(X)^\Gamma/\fC(X)^\Gamma \cong \fD(X_\Gamma)/\fC(X_\Gamma)$. These isomorphisms fit into the larger diagram \\
  \centerline{\xymatrix{
    0 \ar[r] &  \fD(X,Z)^\Gamma \ar[r] \ar[d] & \fD(X)^\Gamma \ar[r] \ar[d] & \frac{\fD(X)^\Gamma}{\fD(X,Z)^\Gamma} \ar[r] \ar[d] & 0 \\
    0 \ar[r] &  \frac{\fD(X,Z)^\Gamma}{\fC(X)^\Gamma} \ar[r] \ar[d]_\sim & \frac{\fD(X)^\Gamma}{\fC(X)^\Gamma} \ar[r]\ar[d]_\sim & Q(X,Z)^\Gamma \ar[r] \ar[d] & 0 \\
    0 \ar[r] &  \frac{\fD(X_\Gamma,Z_\Gamma)}{\fC(X_\Gamma)} \ar[r] & \frac{\fD(X_\Gamma)}{\fC(X_\Gamma)} \ar[r] & Q(X_\Gamma,Z_\Gamma) \ar[r] & 0 \\
    0 \ar[r] &  \fD(X_\Gamma,Z_\Gamma) \ar[r] \ar[u] & \fD(X_\Gamma) \ar[r] \ar[u] & \frac{\fD(X_\Gamma)}{\fD(X_\Gamma,Z_\Gamma)}\ar[r] \ar[u] & 0
  }}
  where $Q(X,Z)^\Gamma$ and $Q(X_\Gamma,Z_\Gamma)$ denote the appropriate quotient algebras. Commutativity of the middle squares holds because all middle arrows are defined via the same quotienting/lifting procedure described in the proof of \cref{EquivQuotient}. By \cref{TrivialKtheory} the upper- and lower-left, and upper- and lower-middle arrows are K-theory isomorphisms. Hence, upon passing to the corresponding diagram between long exact K-theory sequences, we obtain isomorphisms
  $$
    K_*(\fD(X,Z)^\Gamma) \cong K_*(\fD(X_\Gamma,Z_\Gamma)) \quad , \quad K_*(\fD(X)^\Gamma) \cong K_*(\fD(X_\Gamma))
  $$
  The Five lemma lets us conclude
  $$
    K_*^\Gamma(X,Z) \cong K_{*+1}\left( \frac{\fD(X)^\Gamma}{\fD(X,Z)^\Gamma} \right) \cong K_{*+1}\left( \frac{\fD(X_\Gamma)}{\fD(X_\Gamma,Z_\Gamma)} \right) \cong K_*(X_\Gamma,Z_\Gamma) \; .
  $$
  Compatibility with the boundary maps follows from the fact that the K-theory boundary map is natural with respect to maps of short exact seuquences.
\end{proof}

We come to the definition of the secondary index map.

\begin{defn}
  The \emph{secondary index map} is defined as the composition
  $$
  \secInd: \; K_*^\Gamma(X,Z) \cong K_{*+1}\left( \frac{D^*(X)^\Gamma}{D^*(X,Z)^\Gamma} \right) \xlongrightarrow{\partial} K_*(D(X,Z)^\Gamma) \; .
  $$
\end{defn}

\begin{rem}
  As a consequence of \cref{EquivKhom} (ii), one could in principle forego equivariant K-homology, and work with the pair $(X_\Gamma, Z_\Gamma)$ together with the datum of a locally isometric $\Gamma$-covering determining $(X,Z)$: Suppose for the sake of concreteness that $\olO$ is a complete riemannian manifold with boundary. Associated to any continuous map $f: \, \olO \to B\Gamma$ to the classifying space of $\Gamma$, there is an associated $\Gamma$-covering $X\to \olO$. Equip the total space $X$ with the pullback metric, and the $\Gamma$-action by deck transformations. Let $Z$ denote the pre-image of $\dO$ in $X$. Then $X_\Gamma = \olO$ and $Z_\Gamma = \dO$. In this situation, one may consider the secondary index map
  $$
    \secInd(f): \; K_*(\olO,\dO) \cong K_*^\Gamma(X,Z) \xlongrightarrow{\secInd} K_*(D(X,Z)^\Gamma)
  $$
  with domain the (non-equivariant) K-homology of $(\olO,\dO)$, which takes the classifying map $f$ as an additional datum. This approach has its own advantages, especially if it is the base space $\olO$ which is of primary geometric interest. Nevertheless, for our purposes we find it cleanest to work on the total space directly.
\end{rem}

\begin{prop}\label{RelIndexNatural}
Let $(X,Z)$ and $(Y,W)$ be pairs of proper locally compact metric spaces with free, proper, isometric $\Gamma$-actions fixing the respective subspace, and let $\Phi: \, (X,Z) \to (Y,W)$ be an equivariant uniform map of pairs. Then, the diagram \\
\centerline{\xymatrix{
K^\Gamma_*(X,Z) \ar[d]_{\secInd} \ar[r]^{\Phi_*} &  \ar[d]^{\secInd} K^\Gamma_*(Y,W) \\
K_*(D^*(X,Z)^\Gamma) \ar[r]_{\Phi_*} & K_*(D^*(Y,W)^\Gamma)
}}
commutes. 
\end{prop}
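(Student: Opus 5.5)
The plan is to realize $\Phi_*$ on all the algebras involved by a single isometry, and then use naturality of the K-theory boundary map for the resulting map of extensions. Since the very ample representations are fixed, we first choose a $\Gamma$-equivariant isometry $V: H_X \to H_Y$ that covers $\Phi$ in the uniform sense. Concretely, $V$ should satisfy:
\begin{itemize}
  \item[(1)] $\rho_Y(f)V - V\rho_X(f\circ\Phi)$ is compact for all $f\in C_0(Y)$;
  \item[(2)] there is $R\ge 0$ such that $\rho_Y(g)V\rho_X(h)=0$ whenever $d(\mathrm{supp}\, g,\Phi(\mathrm{supp}\, h))\ge R$.
\end{itemize}
Such $V$ exist because $\Phi$ is continuous and coarse and $\rho_Y$ is very ample; this is the construction underlying functoriality of $K_*(D^*(X)^\Gamma)$ in \cite[Prop.~4.3.24]{Siegel2012}. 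Conjugation $\Ad_V: T\mapsto VTV^*$ then maps $D^*(X)^\Gamma$ into $D^*(Y)^\Gamma$. By definition, it also induces the map $\Phi_*$ on K-theory, independently of the choice of $V$ up to canonical isomorphism.

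The key point is that $\Ad_V$ preserves the relative ideal: $\Ad_V(D^*(X,Z)^\Gamma)\subseteq D^*(Y,W)^\Gamma$. To see this, let $T$ be locally compact over $X\setminus Z$ and let $f\in C_0(Y\setminus W)$. Then $\rho_Y(f)VTV^*$ agrees with $V\rho_X(f\circ\Phi)TV^*$ modulo compacts. Since $\Phi(Z)\subseteq W$, the function $f\circ\Phi$ vanishes on $Z$. Properness of $\Phi$ gives that $f\circ\Phi\in C_0(X)$, and in fact $f\circ\Phi\in C_0(X\setminus Z)$, because $f\circ\Phi$ is continuous and vanishes on $Z$. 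Hence $\rho_X(f\circ\Phi)T$ is compact. The same argument applies on the other side. This defines $\Phi_*$ on $K_*(D^*(X,Z)^\Gamma)$, and consequently we obtain a morphism of short exact sequences
$$
0\to D^*(X,Z)^\Gamma\to D^*(X)^\Gamma\to D^*(X)^\Gamma/D^*(X,Z)^\Gamma\to 0
$$
into the corresponding sequence for $(Y,W)$. Naturality of the boundary map $\partial$ in K-theory gives a commutative square relating the two boundary maps on the quotient level.

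It remains to check that under the isomorphism of \cref{EquivKhom}~(i) the induced map on $K_{*+1}(D^*(X)^\Gamma/D^*(X,Z)^\Gamma)$ corresponds to $\Phi_*$ on relative K-homology. The paper asserts naturality in \cref{EquivKhom}~(i), but I would verify it explicitly, since this is the main obstacle. The isomorphism passes through $\fD(X)^\Gamma/\fD(X,Z)^\Gamma$ via \cref{RelQuot}, and $\Ad_V$ is equally defined on $\fD(X)^\Gamma\to\fD(Y)^\Gamma$, compatibly with the inclusions. So one needs Paschke duality to be natural for conjugation by a covering isometry.

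On the Fredholm-module side, $\Phi_*[(H,\rho,F)]=[(H,\rho\circ\Phi^*,F)]$. Paschke duality sends a module to the class of $F$ (or its Cayley transform, or projection) in the dual algebra, using an embedding of $H$ into the ample $H_X$. Composing that embedding with $V$ yields an embedding of $(H,\rho\circ\Phi^*)$ into $H_Y$ that is covariant up to compacts, by property (1). Changing the embedding changes the dual class only by conjugation by a unitary in the dual algebra, and such a conjugation acts trivially on K-theory. This gives the required compatibility.

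Combining the three squares proves commutativity of the diagram. In the graded case the same argument applies verbatim with $V$ chosen grading-preserving.
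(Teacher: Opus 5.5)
Your proof is correct and follows the same route the paper takes. The paper states this proposition without a separate proof and treats it as immediate from two facts: the naturality of the Paschke duality isomorphism asserted in \cref{EquivKhom}~(i), and the naturality of the K-theory boundary map for the morphism of extensions induced by a covering isometry. Your proposal spells out exactly these, including the check that $\Ad_V$ preserves the relative ideal because $f\circ\Phi\in C_0(X\setminus Z)$, and your explicit sketch of why Paschke duality is natural goes beyond what the paper writes.
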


\begin{rem}
  In the case that $\Gamma$ acts non-cocompactly on $X$, one also sees (relative) equivariant K-homology $K^\Gamma_*(X,Z)$ to be defined as the direct limit of $K_*^\Gamma(Y,Y\cap Z)$, taken over all closed, invariant $Y\subseteq X$ on which $\Gamma$ acts cocompactly. This version can be seen as equivariant K-homology with cocompact supports, whereas the version used in this paper can be seen as equivariant locally finite K-homology. There are natural maps
  $$
    \varinjlim K_*^\Gamma(Y, Y\cap Z) \longrightarrow K_*^\Gamma(X,Z) \; , \; \varinjlim K_*\left(D^*(Y, Y\cap Z)^\Gamma\right) \longrightarrow K_*\left(D^*(X,Z)^\Gamma\right)
  $$
  induced by the inclusions $(Y,Y\cap Z)\hookrightarrow (X,Z)$. As a consequence of \cref{RelIndexNatural}, the diagram \\
  \centerline{\xymatrix{
    \varinjlim K_*^\Gamma(Y, Y\cap Z) \ar[r] \ar[d]_{\varinjlim\secInd} & K_*^\Gamma(X,Z) \ar[d]^{\secInd} \\
    \varinjlim K_*\left(D^*(Y, Y\cap Z)^\Gamma\right) \ar[r] &  K_*\left(D^*(X,Z)^\Gamma\right)
  }}
  commutes. Hence, there is an analogous secondary index map on cocompactly supported K-homology, and one may pass frictionlessly from it to its locally finite counterpart.  
\end{rem}

We come to the main theorem of this section.

\begin{thm}\label{IndexCommAlgMV}
Let $X$ be a  proper locally compact metric space with free, proper, isometric $\Gamma$-actions, and $Z$ a $\Gamma$-invariant subspace. Then, the diagram \\
\centerline{\xymatrix{
K^\Gamma_{*}(X,Z) \ar[r]^\partial \ar[d]_{\secInd} & K^\Gamma_{*-1}(Z) \ar[d]^-{\Ind} \\
K_{*}\left( D^*(X,Z)^\Gamma\right) \ar[r]_-\delta & K_{*-1}\left( C^*(Z)^\Gamma \right)
}} 
commutes.
\end{thm}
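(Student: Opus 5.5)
We reduce the statement to naturality of K-theory boundary maps. The plan is to exhibit the four maps in the square as boundary maps or connecting maps attached to short exact sequences of $C^*$-algebras, and to relate those sequences by $^*$-homomorphisms. The first step is to recall that, under Paschke duality (\cref{EquivKhom}(i)), the K-homology boundary map $\partial\colon K_*^\Gamma(X,Z)\to K_{*-1}^\Gamma(Z)$ is the K-theory boundary map of the extension
$$
0\to \frac{\fD(X,Z)^\Gamma}{\fC(X)^\Gamma}\to \frac{\fD(X)^\Gamma}{\fC(X)^\Gamma}\to \frac{\fD(X)^\Gamma}{\fD(X,Z)^\Gamma}\to 0 .
$$
This uses the identification $K_{*}(\fD(X,Z)^\Gamma/\fC(X)^\Gamma)\cong K_*(\fD(Z)^\Gamma/\fC(Z)^\Gamma)\cong K^\Gamma_{*-1}(Z)$, obtained from \cref{StructureDual}, \cref{RelLocalized} and the Higson--Roe excision \cite[Lemma~2.2]{Higson1995}, possibly up to a sign convention that has to be fixed once. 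Similarly, $\Ind$ on $Z$ is the boundary map of $0\to C^*(Z)^\Gamma\to D^*(Z)^\Gamma\to D^*(Z)^\Gamma/C^*(Z)^\Gamma\to 0$. By \cref{RoeIncl} and \cref{RelLocalizedAisZ}, this may be replaced by the localized sequence $0\to C^*(Z\subseteq X)^\Gamma\to D^*(Z\subseteq X,Z)^\Gamma\to D^*(Z\subseteq X,Z)^\Gamma/C^*(Z\subseteq X)^\Gamma\to 0$, whose quotient equals $\fD(X,Z)^\Gamma/\fC(X)^\Gamma$ by \cref{StructureDual}.

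Next we identify $\delta$. Since $D^*(X,Z)^\Gamma=I+J$ with $I=C^*(X)^\Gamma$, $J=D^*(Z\subseteq X,Z)^\Gamma$ and $I\cap J=C^*(Z\subseteq X)^\Gamma$, the Mayer--Vietoris boundary is the composition
$$
K_*(D^*(X,Z)^\Gamma)\to K_*\!\left(\frac{D^*(X,Z)^\Gamma}{C^*(X)^\Gamma}\right)\cong K_*\!\left(\frac{J}{I\cap J}\right)\xrightarrow{\partial} K_{*-1}(I\cap J).
$$
We use this standard description, up to the usual sign convention for the Mayer--Vietoris boundary. The quotient map in the first arrow is $D^*(X,Z)^\Gamma\to\fD(X,Z)^\Gamma/\fC(X)^\Gamma$, and the connecting map in the last arrow is precisely the index map of the localized sequence above.

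Hence $\delta\circ\secInd$ equals the composite
$$
K_{*+1}\!\left(\tfrac{D^*(X)^\Gamma}{D^*(X,Z)^\Gamma}\right)\xrightarrow{\partial_1}K_*(D^*(X,Z)^\Gamma)\xrightarrow{q}K_*\!\left(\tfrac{\fD(X,Z)^\Gamma}{\fC(X)^\Gamma}\right)\xrightarrow{\partial_2}K_{*-1}(C^*(Z\subseteq X)^\Gamma).
$$
The sequence $0\to D^*(X,Z)^\Gamma\to D^*(X)^\Gamma\to D^*(X)^\Gamma/D^*(X,Z)^\Gamma\to 0$ maps to the Paschke extension of the first step by quotienting out $C^*(X)^\Gamma$ and $\fC(X)^\Gamma$ respectively. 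On the quotient the map is the identity, by \cref{RelQuot}. Naturality of the boundary map therefore gives $q\circ\partial_1=\partial_{\mathrm{Paschke}}$, which is $\partial$ on K-homology. Composing with $\partial_2$, which is $\Ind$ after the identifications of the first step, yields $\Ind\circ\partial$.

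The main obstacle is the bookkeeping in the first step, namely checking that the identification $K_*(\fD(X,Z)^\Gamma/\fC(X)^\Gamma)\cong K^\Gamma_{*-1}(Z)$ used for the K-homology boundary is the same one under which $\partial_2$ becomes the coarse index of $Z$. Concretely, one has to verify that the Paschke-dual class of a module over $Z$, extended by zero and viewed in $\fD(X,Z)^\Gamma/\fC(X)^\Gamma$, is the image of its class in $D^*(Z)^\Gamma/C^*(Z)^\Gamma$. This holds because both identifications are induced by extension by zero $H_Z\subseteq H$, using the assumed very ampleness of $\rho_Z$, together with the diagram in the proof of \cref{RelLocalized}. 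Signs in the degree-shifting conventions must be tracked consistently; at worst they contribute a global sign fixed by convention.
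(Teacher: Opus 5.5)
Your proposal is correct and follows essentially the same route as the paper. It uses naturality of the boundary map together with \cref{RelQuot} to pass from $\secInd$ to the K-homology boundary, and it describes $\delta$ through $D^*(X,Z)^\Gamma/C^*(X)^\Gamma = D^*(Z\subseteq X,Z)^\Gamma/C^*(Z\subseteq X)^\Gamma$. The ``bookkeeping'' step you isolate is settled, exactly as in the paper, by the upper-corner inclusion $H_Z\subseteq H$. This inclusion is compatible with the identifications $\fD(Z)^\Gamma/\fC(Z)^\Gamma = D^*(Z)^\Gamma/C^*(Z)^\Gamma$ and $\fD(X,Z)^\Gamma/\fC(X)^\Gamma = D^*(Z\subseteq X,Z)^\Gamma/C^*(Z\subseteq X)^\Gamma$ already at the level of algebras.

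The only cosmetic difference is the order of steps. You quotient the Paschke extension by $\fC(X)^\Gamma$ from the start. The paper instead first maps into $K_*(\fD(X,Z)^\Gamma)$ and then passes to the quotient via \cref{TrivialKtheory}. Your sign caveats are also unnecessary once $\delta$ is taken to be the composite you wrote down, which is how the paper defines it.
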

\begin{proof}
We implicitly use a very ample representation $(H,\sigma)$ such that $(H_Z:=P_ZH, \sigma_Z:=P_Z\sigma(\mathbf{1}_Z\cdot))$ is a very ample representation of $Z$, where $P_Z:=\sigma(\mathbf{1}_Z)$. Then, the upper-corner inclusion of $\fD(Z)^\Gamma \subseteq \fB(H_Z)$ into $\fD(X,Z)^\Gamma \subseteq \fB(H=H_Z\oplus H^\perp_Z)$ realizes the isomorphism $K_*(\fD(Z)^\Gamma)\cong K_*(\fD(X,Z)^\Gamma)$. Restricting the upper corner inclusion gives rise to inclusions $D^*(Z)^\Gamma\hookrightarrow D^*(Z\subseteq X,Z)^\Gamma$ and $C^*(Z)^\Gamma\hookrightarrow C^*(Z\subseteq X)^\Gamma$, with the latter inducing an isomorphism on K-theory by \cref{RoeIncl}. Thus, there is also a well-defined map 
\begin{equation}\label{TauQuot}
\frac{D^*(Z)^\Gamma}{C^*(Z)^\Gamma} \longrightarrow \frac{D^*(Z\subseteq X,Z)^\Gamma}{C^*(Z\subseteq X)^\Gamma} \; .
\end{equation}
For the sake of efficient notation we will denote this and all other maps given or induced by the upper-corner inclusion by $\tau$.

Observe that by \cref{RelQuot} and naturality of the boundary map we have a commutative diagram\\
\centerline{\xymatrix{
K_{*+1}\left( \frac{D^*(X)^\Gamma}{D^{*}(X,Z)^\Gamma} \right) \ar@{=}[r] \ar[d]_{\secInd} & K_{*+1}\left( \frac{\fD(X)^\Gamma}{\fD(X,Z)^\Gamma} \right) \ar[d]^\partial \\
K_{*}\left( D^*(X,Z)^\Gamma\right) \ar[r] & K_{*}\left( \fD(X,Z)^\Gamma \right)
}}
where the right-vertical arrow represents the boundary map on K-homology. It therefore suffices to prove that that the diagram\\
\begin{equation}\label{DiagramMV}
\xymatrix{
K_{*}\left( D^*(X,Z)^\Gamma\right) \ar[rr] \ar[dr]_{\delta} & & K_{*}\left( \fD(X,Z)^\Gamma \right) \ar[dl]^{\Ind} \\
& K_{*-1}\left(C^*(Z)^\Gamma\right) & }
\end{equation}
commutes. Let us unravel the definitions of $\delta$ and $\Ind$. The former is given by the composition\footnote{This description of the Mayer-Vietoris boundary map can be obtained by viewing the Mayer-Vietoris sequence of a decomposition $A=I+J$ as the long exact sequence of the pair $(I\cap J, I \oplus J)$, where $I\cap J$ is included diagonally in $I\oplus J$.}
\begin{align} \label{DescriptionMVboundary}
K_{*}\left( D^*(X,Z)^\Gamma \right) &\rightarrow K_{*}\left( D^*(X,Z)^\Gamma/C^*(X)^\Gamma\right) \\
&= K_{*}\left( D^*(Z\subseteq X,Z)^\Gamma/C^*(Z\subseteq X)^\Gamma\right) \nonumber \\
& \xrightarrow{\partial} K_{*-1}\left(C^*(Z\subseteq X)^\Gamma\right) \nonumber \\
&\xrightarrow{\sim} K_{*-1}\left( C^*(Z)^\Gamma \right) \; . \nonumber
\end{align}
Here, we used that
\begin{equation}\label{eq_cutoff_Z}
  \frac{D^*(Z\subseteq X,Z)^\Gamma}{C^*(Z\subseteq X)^\Gamma} = \frac{D^*(X,Z)^\Gamma}{C^*(X)^\Gamma} \; ,
\end{equation}
which can be obtained by cutting off near $Z$. The coarse index map $\Ind$ is given by 
\begin{align*}
K_{*}\left( \fD(X,Z)^\Gamma \right) &\xrightarrow{\sim} K_{*}\left( \fD(Z)^\Gamma \right) \\
&\xrightarrow{\sim} K_{*}\left( \fD(Z)^\Gamma/\fC(Z)^\Gamma\right) \\
&= K_{*}\left( D^*(Z)^\Gamma/C^*(Z)^\Gamma\right) \\
& \xrightarrow{\partial} K_{*-1}\left(C^*(Z)^\Gamma\right) \; .
\end{align*}
As discussed above the first arrow is inverse to the upper-corner inclusion. Recall also that the upper-corner inclusion induced the map \eqref{TauQuot} on quotients. Combining this with the definitions of $\delta$ and $\Ind$ we write diagram \eqref{DiagramMV} as 
\begin{equation}\label{DiagramMV2}
\xymatrix{
K_{*}\left( D^*(X,Z)^\Gamma\right) \ar[r] \ar[d] & K_{*}\left( \fD(X,Z)^\Gamma \right) \ar[d]^{\sim} \\
K_{*}\left( \frac{D^*(X,Z)^\Gamma}{C^*(X)^\Gamma}\right) \ar@{=}[d] & \ar[l]_-{\tau_*} K_{*}\left( \frac{D^*(Z)^\Gamma}{C^*(Z)^\Gamma}\right) \ar@{=}[d] \\
K_{*}\left( \frac{D^*(Z\subseteq X,Z)^\Gamma}{C^*(Z\subseteq X)^\Gamma}\right) \ar[d]_\partial & \ar[l]_-{\tau_*} K_{*}\left( \frac{D^*(Z)^\Gamma}{C^*(Z)^\Gamma}\right) \ar[d]^\partial \\
K_{*-1}\left( C^*(Z\subseteq X)^\Gamma \right) & \ar[l]_\sim K_{*-1}\left( C^*(Z)^\Gamma \right) }
\end{equation}
where the bottom horizontal isomorphism is also induced by the upper-corner inclusion. The lower square is commutative by naturality of the boundary map. Therefore it suffices to show commutativity of the upper square. 

To that end, note first that by \cref{StructureDual} the quotient in \eqref{eq_cutoff_Z} is also equal to $\fD(X,Z)^\Gamma/\fC(X)^\Gamma$. The diagram\\
\centerline{\xymatrix{
K_{*}\left( \fD(Z)^\Gamma \right) \ar[r]^-{\tau_*} \ar[d] & K_{*}\left( \fD(X,Z)^\Gamma \right) \ar[d] \\
K_{*}\left( \frac{\fD(Z)^\Gamma}{\fC(Z)^\Gamma}\right)\ar[r]_-{\tau_*} & K_{*}\left( \frac{\fD(X,Z)^\Gamma}{  \fC(X)^\Gamma}\right)
}}
commutes, where we recall that $\tau$ always denotes maps induced by upper-corner inclusion. Recall also that the upper-right isomorphism in \eqref{DiagramMV2} is given by 
$$
K_{*}\left( \fD(X,Z)^\Gamma \right) \xrightarrow{\tau_*^{-1}} K_{*}\left( \fD(Z)^\Gamma \right) \longrightarrow K_{*}\left( \frac{\fD(Z)^\Gamma}{\fC(Z)^\Gamma}\right) = K_{*}\left( \frac{D^*(Z)^\Gamma}{C^*(Z)^\Gamma}\right) \; .
$$
The combination of these facts reduces the upper square in \eqref{DiagramMV2} to the commutativity of \\
\centerline{\xymatrix{
K_{*}\left( D^*(X,Z)^\Gamma\right) \ar[r] \ar[d] & K_{*}\left( \fD(X,Z)^\Gamma \right) \ar[d] \\
 K_{*}\left( \frac{D^*(X,Z)^\Gamma}{C^*(X)^\Gamma}\right) \ar@{=}[r]& K_{*}\left( \frac{\fD(X,Z)^\Gamma}{\fC(X)^\Gamma} \right)
}}
which holds even before passing to K-theory. This concludes the proof.
\end{proof}

While our primary interest is the secondary index map, we also describe how it can be used to construct a relative index map to the relative Roe algebra. To that end, define $C^*(X,Z)^\Gamma$ as the quotient $C^*(X)/C^*(Z\subseteq X)^\Gamma$.

\begin{lem}
The inclusion $C^*(X)^\Gamma\hookrightarrow D^*(X,Z)^\Gamma$ induces an isomorphism
$$
\frac{C^*(X)^\Gamma}{C^*(Z\subseteq X)^\Gamma} \xlongrightarrow{\sim} \frac{D^*(X,Z)^\Gamma}{D^*(Z\subseteq X,Z)^\Gamma} \; .
$$
Thus, the composition
$$
D^*(X,Z)^\Gamma \longrightarrow \frac{D^*(X,Z)^\Gamma}{D^*(Z\subseteq X,Z)^\Gamma} \xlongleftarrow{\sim} \frac{C^*(X)^\Gamma}{C^*(Z\subseteq X)^\Gamma}
$$
produces a natural forgetful\footnote{We use the adjective \emph{forgetful} here to indicate that the map $\sF$ is defined by forgetting the information near $Z$.} map
$$
D^*(X,Z)^\Gamma \xlongrightarrow{\sF} C^*(X,Z)^\Gamma \; .
$$
\end{lem}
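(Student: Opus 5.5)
The plan is to use the second isomorphism theorem for $C^*$-algebras. We know from the proof of \cref{AlgebraicMV} that $D^*(X,Z)^\Gamma = C^*(X)^\Gamma + D^*(Z\subseteq X,Z)^\Gamma$, where both summands are closed two-sided $^*$-ideals. The sum of two closed ideals in a $C^*$-algebra is automatically closed, so this decomposition holds on the nose. We also know that $C^*(X)^\Gamma \cap D^*(Z\subseteq X,Z)^\Gamma = C^*(Z\subseteq X)^\Gamma$. Granting these two facts, the map $C^*(X)^\Gamma \to D^*(X,Z)^\Gamma/D^*(Z\subseteq X,Z)^\Gamma$ induced by inclusion is a $^*$-homomorphism. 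Its image is $(C^*(X)^\Gamma + D^*(Z\subseteq X,Z)^\Gamma)/D^*(Z\subseteq X,Z)^\Gamma$, which is everything, so it is surjective. Its kernel is $C^*(X)^\Gamma\cap D^*(Z\subseteq X,Z)^\Gamma = C^*(Z\subseteq X)^\Gamma$. The induced injective $^*$-homomorphism from the quotient is therefore a bijection, hence a $^*$-isomorphism.

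The two inputs need some justification, so I would recheck them as follows. For the sum decomposition, take a generator $T$ of $D^*(X,Z)^\Gamma$: it is equivariant, pseudo-local, of finite propagation $R$, and locally compact over $X\setminus Z$. Choose an equivariant cut-off function $\chi$ with the following properties:
\begin{itemize}
\item $\chi = 1$ on $N_1(Z)$;
\item $\chi$ is supported in $N_2(Z)$;
\item $\chi$ is Lipschitz.
\end{itemize}
Write $T = \chi T\chi + \big(T - \chi T \chi\big)$. The first term is supported near $Z$, pseudo-local, and of finite propagation. It is also locally compact over $X\setminus Z$, because it is a product of $T$ with multiplication operators, and $T$ already has this property. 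The second term equals $(1-\chi)T + \chi T(1-\chi)$. Since $1-\chi$ vanishes near $Z$, the term $T\rho(1-\chi)$ is locally compact over all of $X$: compactness of $T\rho(1-\chi)\rho(f)$ follows from local compactness over $X\setminus Z$, because $(1-\chi)f\in C_0(X\setminus Z)$. The same holds for $\rho(1-\chi)T$. Hence the second term lies in $C^*(X)^\Gamma$. Passing to norm limits and using closedness of the sum of ideals gives the decomposition.

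For the intersection, the inclusion $C^*(Z\subseteq X)^\Gamma\subseteq$ both summands is clear. Conversely, take an element of the intersection and approximate it by finite-propagation elements. One subtlety is that the approximants must be simultaneously supported near $Z$ and locally compact. I would handle this by multiplying by $\rho(\chi_n)$, where $\chi_n$ are cut-offs around $Z$. If $S\in D^*(Z\subseteq X,Z)^\Gamma$, then $\chi_n S \chi_n \to S$ in norm for appropriately growing supports, since on the generators one eventually has $\chi_n S\chi_n = S$. If $S$ is also a norm limit of locally compact finite-propagation operators $S_k$, then $\chi_n S_k\chi_n$ lies in $C^*(Z\subseteq X)^\Gamma$ and approximates $S$. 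This is the step I expect to require the most care, since it needs a uniform control of the approximation, but the argument is standard (compare \cref{StructureDual}).

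Finally, $\sF$ is defined as the composition of the quotient map with the inverse isomorphism. It is natural because all constructions commute with the maps induced by equivariant uniform maps of pairs, or with covering isometries in the relevant cases.
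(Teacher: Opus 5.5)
Your proof is correct and follows the paper's own argument: the cut-off decomposition $D^*(X,Z)^\Gamma = C^*(X)^\Gamma + D^*(Z\subseteq X,Z)^\Gamma$, the intersection $C^*(X)^\Gamma\cap D^*(Z\subseteq X,Z)^\Gamma = C^*(Z\subseteq X)^\Gamma$, and then the second isomorphism theorem; the paper prints the intersection as $\cap\, D^*(X,Z)^\Gamma$, evidently a typo for the version you use. One point to tighten: local compactness of $\rho(1-\chi)T$ and $T\rho(1-\chi)$ over $X$ is a two-sided condition, and the sides you did not check, e.g.\ $\rho(f)T\rho(1-\chi) = T\rho(f(1-\chi)) + [\rho(f),T]\rho(1-\chi)$, need the pseudo-locality of $T$ (or, alternatively, a cut-off equal to $1$ on $N_{R+1}(Z)$ combined with finite propagation).
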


\begin{proof}
  It holds that $C^*(Z\subseteq X)^\Gamma = D^*(Z\subseteq X,Z)^\Gamma \cap D^*(X,Z)^\Gamma$, and cut-off argument proves $C^*(X)^\Gamma + D^*(Z \subseteq X,Z)^\Gamma = D^*(X,Z)^\Gamma$. 
\end{proof}

Hence, we may define the \emph{relative equivariant index map} as the composition
$$
  \Ind_\mathrm{rel} : \; K^\Gamma_*(X,Z) \xlongrightarrow{\secInd} K_*\left( D^*(X,Z)^\Gamma \right) \xlongrightarrow{\sF_*} K_*\left( C^*(X,Z)^\Gamma \right) \; .
$$
This index map forgets the secondary information near the boundary, and only retains the coarse information in the interior. As compensation, one obtains a transformation between long exact sequences.

\begin{thm}\label{LEStransform}
  There is a commutative diagram \\
\centerline{\xymatrix{
K_*^{\Gamma}(Z) \ar[r] \ar[d]_{\Ind} & K^{\Gamma}_*(X) \ar[r] \ar[d]_{\Ind} & K^{\Gamma}_*(X,Z) \ar[r]^\partial \ar[d]_{\Ind_\mathrm{rel}} & K_{*-1}^{\Gamma}(Z) \ar[d]_{\Ind} \\
K_*\left(C^*(Z)^\Gamma\right) \ar[r] & K_*\left(C^*(X)^\Gamma\right) \ar[r] & K_*\left(C^*(X,Z)^\Gamma\right) \ar[r]_\partial & K_{*-1}\left(C^*(Z)^\Gamma\right)
}}
\end{thm}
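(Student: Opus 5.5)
We prove commutativity square by square, reducing everything to naturality of K-theory boundary maps for morphisms of short exact sequences. The first square is naturality of the coarse index under the equivariant uniform inclusion $Z\hookrightarrow X$, which follows from \cref{RelIndexNatural} with $Z=\emptyset$ (there $\secInd$ is the coarse index, since $D^*(X,\emptyset)^\Gamma=C^*(X)^\Gamma$). The third square follows from \cref{IndexCommAlgMV}. It suffices to check that the boundary map $\partial$ of the extension $0\to C^*(Z\subseteq X)^\Gamma\to C^*(X)^\Gamma\to C^*(X,Z)^\Gamma\to 0$, composed with $\sF_*$ and with the inverse of the isomorphism of \cref{RoeIncl}, equals the Mayer-Vietoris boundary $\delta$. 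To see this, map the extension
$$
0\to D^*(Z\subseteq X,Z)^\Gamma\to D^*(X,Z)^\Gamma\to \frac{D^*(X,Z)^\Gamma}{D^*(Z\subseteq X,Z)^\Gamma}\to 0
$$
into, or rather compare it with, the Roe extension via the isomorphism of the preceding lemma. Both $\delta$ and $\partial\circ\sF_*$ then factor through the decomposition $D^*(X,Z)^\Gamma=C^*(X)^\Gamma+D^*(Z\subseteq X,Z)^\Gamma$. In this decomposition the Mayer-Vietoris boundary is obtained by mapping to the quotient by either ideal and then taking the boundary into the intersection. The description \eqref{DescriptionMVboundary} uses the quotient by $C^*(X)^\Gamma$, whereas $\partial\circ\sF_*$ uses the quotient by $D^*(Z\subseteq X,Z)^\Gamma$. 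These two routes agree up to a universal sign convention, which is the standard symmetry of the Mayer-Vietoris boundary. Concretely, this follows from naturality of $\partial$ applied to the map of extensions from $(C^*(Z\subseteq X)^\Gamma, C^*(X)^\Gamma)$ into $(D^*(Z\subseteq X,Z)^\Gamma, D^*(X,Z)^\Gamma)$ and its counterpart, and the sign is absorbed by the choice of orientation of $\delta$.

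For the second square we must show that $K^\Gamma_*(X)\to K^\Gamma_*(X,Z)\xrightarrow{\Ind_\mathrm{rel}}K_*(C^*(X,Z)^\Gamma)$ equals $\Ind$ followed by the quotient map. The map $K^\Gamma_*(X)\to K^\Gamma_*(X,Z)$ is induced under Paschke duality by the quotient $\fD(X)^\Gamma/\fC(X)^\Gamma\to \fD(X)^\Gamma/\fD(X,Z)^\Gamma$. Equivalently, via \cref{RelQuot}, it is induced by $D^*(X)^\Gamma/C^*(X)^\Gamma\to D^*(X)^\Gamma/D^*(X,Z)^\Gamma$. We therefore get a morphism of extensions
$$
(C^*(X)^\Gamma\to D^*(X)^\Gamma)\longrightarrow (D^*(X,Z)^\Gamma\to D^*(X)^\Gamma),
$$
which is the identity on $D^*(X)^\Gamma$ and the inclusion on ideals. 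Naturality of $\partial$ then gives $\secInd\circ(\text{restriction})=\iota_*\circ\Ind$, where $\iota\colon C^*(X)^\Gamma\hookrightarrow D^*(X,Z)^\Gamma$. Finally $\sF\circ\iota$ is the quotient map $C^*(X)^\Gamma\to C^*(X,Z)^\Gamma$ by construction of $\sF$.

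The fourth square is the delicate step. It asks that $\partial\circ\Ind_\mathrm{rel}$ agree with $\Ind\circ\partial$. Using \cref{IndexCommAlgMV}, this reduces to showing $\partial\circ\sF_*=(\text{\cref{RoeIncl}})^{-1}\circ\,\ldots\circ\delta$, that is, the comparison of Mayer-Vietoris boundary routes above. So this comparison, and keeping the signs consistent, is the main obstacle. We would first try to verify it at the level of explicit representatives (unitaries/projections lifted through $C^*(X)^\Gamma$ versus $D^*(Z\subseteq X,Z)^\Gamma$ using a cut-off function near $Z$), checking that the two lifts differ by an element whose boundary vanishes.
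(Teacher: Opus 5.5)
The first two squares in your proposal are correct and follow the paper. For the second square, your morphism of extensions $C^*(X)^\Gamma\subseteq D^*(X,Z)^\Gamma$ inside $D^*(X)^\Gamma$, combined with the fact that $\sF\circ\iota$ is the quotient map $C^*(X)^\Gamma\to C^*(X,Z)^\Gamma$, is a clean way to argue. Note that the diagram has only three squares: what you call the ``fourth'' square is your third square again.

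The gap is in the one non-formal step: identifying $\partial\circ\sF_*$ with the Mayer--Vietoris boundary $\delta$. You say the two routes agree ``up to a universal sign convention'' and that ``the sign is absorbed by the choice of orientation of $\delta$''. There is no such freedom. The map $\delta$ does not appear in the statement, and you use it only through \cref{IndexCommAlgMV}. That theorem holds for the specific $\delta$ of \eqref{DescriptionMVboundary}: first pass to the quotient by $J:=C^*(X)^\Gamma$, then take the boundary of $0\to I\cap J\to I\to I/(I\cap J)\to 0$, where $I:=D^*(Z\subseteq X,Z)^\Gamma$. Reversing the sign of $\delta$ only moves the sign into \cref{IndexCommAlgMV}.

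The sign is genuinely there. Put $A:=D^*(X,Z)^\Gamma=I+J$. Then $A/(I\cap J)\cong A/I\oplus A/J$, with $A/I\cong J/(I\cap J)=C^*(X,Z)^\Gamma$ and $A/J\cong I/(I\cap J)$. Take $x\in K_*(A)$. By exactness, the image of $x$ in $K_*(A/(I\cap J))$ has vanishing boundary in $K_{*-1}(I\cap J)$. By naturality for the inclusions $J\hookrightarrow A$ and $I\hookrightarrow A$, that boundary equals $\partial\sF_*(x)+\delta(x)$, computed before the identification of \cref{RoeIncl}. Hence $\partial\circ\sF_*=-\delta$.

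It follows that $\partial\circ\Ind_\mathrm{rel}=-\Ind\circ\partial$ under exactly the conventions for which \cref{IndexCommAlgMV} was proved. This is the familiar anticommutativity of composite connecting maps in a $3\times 3$ diagram. It is not vacuous: for $X=[0,\infty)$, $Z=\{0\}$ and trivial $\Gamma$, the composite $\Ind\circ\partial\colon K_1(X,Z)\to K_0(C^*(Z))$ is an isomorphism $\mathbb{Z}\to\mathbb{Z}$. So the representative-level check you propose, showing that the two lifts differ by something with vanishing boundary, cannot succeed.

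The paper's proof takes your route and simply asserts that $\partial\circ\sF_*$ ``coincides'' with $\delta$, so it overlooks the same sign. An honest proof shows the third square commutes up to $-1$. Alternatively, it commutes strictly after reversing the sign convention for the K-homology boundary map, at the price of a sign in \cref{IndexCommAlgMV}. Either way, the sign has to be stated rather than absorbed.
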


\begin{proof}
Commutativity of the first square is due to the naturality of the (non-relative) index map. Using \cref{EquivKhom} and naturality of the K-theory boundary map, it is not hard to see that the relative index map $\Ind_\mathrm{rel}$ can also be written as the composition
$$
  K_{*+1}\left(\frac{D^*(X)^\Gamma}{D^*(X,Z)^\Gamma}\right) \longrightarrow K_{*+1}\left( \frac{D^*(X)^\Gamma/D^*(X,Z)^\Gamma}{C^*(X,Z)^\Gamma} \right) \xlongrightarrow{\partial} K_*\left( C^*(X,Z)^\Gamma \right) \; ,
$$
where one uses that $C^*(X,Z)^\Gamma$ is included as an ideal in $D^*(X)^\Gamma/D^*(X,Z)^\Gamma$. The second square can thus be written as \\
\centerline{\xymatrix{
K_{*+1}\left(\frac{D^*(X)^\Gamma}{C^*(X)^\Gamma}\right) \ar[r] \ar[dd]_{\partial} & K_{*+1}\left(\frac{D^*(X)^\Gamma}{D^*(X,Z)^\Gamma}\right) \ar[d] \\
& K_{*+1}\left( \frac{D^*(X)^\Gamma/D^*(X,Z)^\Gamma}{C^*(X,Z)^\Gamma} \right) \ar[d]^\partial \\
K_*\left(C^*(X)^\Gamma\right) \ar[r] & K_*\left(C^*(X,Z)^\Gamma\right)
}}
which commutes by naturality of the boundary map. Lastly, inserting the definition of the relative index map, write the third square as \\
\centerline{\xymatrix{
K^{\Gamma}_*(X,Z) \ar[rr]^\partial \ar[d]_{\secInd} && K_{*-1}^{\Gamma}(Z) \ar[d]^{\Ind} \\
K_*\left(D^*(X,Z)^\Gamma\right) \ar[r]_{\sF_*} & K_*\left(C^*(X,Z)^\Gamma\right) \ar[r]_\partial & K_{*-1}\left(C^*(Z)^\Gamma\right)
}}
Comparing with \eqref{DescriptionMVboundary}, one sees the composition 
$$
K_*\left(D^*(X,Z)^\Gamma\right) \xrightarrow{\sF_*} K_*\left(C^*(X,Z)^\Gamma\right) \xrightarrow{\partial} K_{*-1}\left(C^*(Z)^\Gamma\right)
$$ 
to coincide with the algebraic Mayer-Vietoris boundary map. Thus, commutativity of the last square is a consequence of \cref{IndexCommAlgMV}.
\end{proof}

As described in the introduction, \cref{LEStransform} and the algebraic Mayer-Vietoris sequence of \cref{AlgebraicMV} can be combined into the diagram: \\
\centerline{\xymatrix@C=10pt{
K_*^{\Gamma}(Z) \ar[r] \ar[d]_{\Ind} & K^{\Gamma}_*(X) \ar[r] \ar@{.>}[d]_{\Ind\oplus 0} & K^{\Gamma}_*(X,Z) \ar[r]^\partial \ar[d]_{\secInd} & K_{*-1}^{\Gamma}(Z) \ar[d]_{\Ind} \\ 
K_*(C^*(Z)^\Gamma) \ar[r] \ar@{=}[d] & K_*(C^*(X)^\Gamma)\oplus K_*(D^*(Z)^\Gamma) \ar@{.>}[d]_{\id\oplus 0} \ar[r] & K_*(D^*(X,Z)^\Gamma) \ar[d]_{\sF_*} \ar[r]^\delta & K_{*-1}(C^*(Z)^\Gamma)\ar@{=}[d] \\
K_*(C^*(Z)^\Gamma) \ar[r] & K_*(C^*(X)^\Gamma) \ar[r] & K_*(C^*(X,Z)^\Gamma) \ar[r]^\partial & K_{*-1}(C^*(Z)^\Gamma)
}}
where the dotted vertical arrows indicate that the small squares containing them do \emph{not} commute due to the presence of the summand $K_*(D^*(Z)^\Gamma)$, and the third vertical composition is the relative index map $\Ind_\mathrm{rel}$. 

\section{Relative cycles from the Dirac operator}\label{SectionBVPs}

This section contains a minimalist review of how to obtain relative cycles from Dirac operators on manifolds with boundary by choosing a boundary condition. While the use of boundary conditions is not necessary to define the relative class of the Dirac operator on a manifold with boundary, we will find them to be a convenient computational tool. Relative cycles have been constructed from boundary conditions for first-order operators on compact manifolds by Baum, Douglas and Taylor \cite{BDT1989}. Extensions to first-order operators on non-compact manifolds, and to the minimal boundary condition for higher-order operators on (possibly non-compact) manifolds have been given by Forsyth--Goffeng--Messland--Rennie and Fries respectively \cite{Forsyth2019,Fries2025}. Recently, Engel and the author discussed general regular boundary conditions for operators of arbitrary order on possibly non-compact manifolds \cite{Engel2026}. We will recall only what is necessary for our application in \cref{SecAPS}.

Let $\olO$ be a complete spin manifold with boundary with interior $\Omega$. We will denote by $L^2(\olO)$ the space of $L^2$-sections of the spinor bundle, keeping the bundle implicit in our notation. We denote by $\rho$ the representation of $C_0(\olO)$ by multiplication operators. Suppose that $\olO$ carries a free, proper $\Gamma$-action by spin isometries. Consider the Dirac operator $\Dirac$. We will focus on the even-dimensional case, so that $\Dirac$ is a graded operator 
$$
\Dirac = \begin{pmatrix}
0 & \Dirac^- \\ \Dirac^+ & 0
\end{pmatrix} \; .
$$
The Dirac operator defines an unbounded operator on $L^2(\olO)$ with domain consisting of smooth sections with compact support in the interior. We consider the self-adjoint extension
\begin{equation}\label{bc}
  \Dirac_e = \begin{pmatrix}
0 & \Dirac^-_{\min} \\ \Dirac^+_{\max} & 0
\end{pmatrix} \; .
\end{equation}
Here, $\Dirac^-_{\min}$ denotes the closure of $\Dirac^-$ as an unbounded operator, and $\Dirac^+_{\max}$ the Hilbert space adjoint of $\Dirac_{\min}^-$. The domain of $\Dirac_{\min}^-$ is continuously included in the space $H^1_{0,\loc}(\olO)$ of local Sobolev sections vanishing on the boundary, making it a semi-regular boundary condition in the sense of \cite{Engel2026}. The Dirac operator is $\Gamma$-equivariant, and the same holds for $\Dirac_e$. Hence, the results of \cite{Engel2026} provide the following\footnote{This result can also be obtained from \cite{Forsyth2019}, specifically Section 2.2.2 therein, though a little analysis has to be done to verify that the extension $\Dirac_e$ falls in the setting of that work.}.

\begin{prop}[{\cite[Prop.~6.8]{Engel2026}}]
  Let $\olO$ be an even-dimensional, complete spin manifold with boundary. Suppose that $\olO$ carries a free, proper action of $\Gamma$ by spin isometries. Let $\chi$ be any normalizing function\footnote{A normalizing function is an odd, continuous function $\chi: \, \RR \to \RR$ with $\lim_{\lambda\to\infty} \chi(\lambda)=1$.}. Then, the triple
  $$
    \left( L^2(\olO), \rho, \chi(\Dirac_e) \right)
  $$
  is a relative equivariant Fredholm module over $(\olO,\dO)$, whose K-homology class is independent of $\chi$, and coincides with $[\Dirac] \in K^\Gamma_0(\Omega)$ under the excision isomorphism.
\end{prop}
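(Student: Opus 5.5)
We need to verify three things: that $F:=\chi(\Dirac_e)$ is a relative equivariant Fredholm module over $(\olO,\dO)$, that its class is independent of $\chi$, and that it maps to $[\Dirac]$ under excision.

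\textbf{Fredholm module conditions.} The operator $\Dirac_e$ is self-adjoint and odd, since it is built from the adjoint pair $\Dirac^-_{\min}$, $\Dirac^+_{\max}$. Hence $F$ is odd and self-adjoint, and it commutes with the unitary $\Gamma$-action because $\Dirac_e$ is equivariant. It remains to check two properties:
\begin{enumerate}
\item pseudo-locality of $F$ over all of $\olO$;
\item local compactness of $I-F^2=(1-\chi^2)(\Dirac_e)$ over $\Omega$.
\end{enumerate}

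For the second property, note that $1-\chi^2\in C_0(\RR)$, so it suffices to show that $\rho(f)\,g(\Dirac_e)$ is compact for $f\in C_c(\Omega)$ and $g\in C_0(\RR)$. By Stone--Weierstrass it is enough to take $g(\lambda)=(\lambda\pm i)^{-1}$. For $f$ supported in the interior, $\rho(f)(\Dirac_e\pm i)^{-1}$ maps $L^2$ into $H^1$ sections compactly supported in $\Omega$; this is interior elliptic regularity, for which no boundary condition is needed. Rellich's lemma then gives compactness.

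For pseudo-locality, I would first reduce to $\chi(\lambda)=\lambda(1+\lambda^2)^{-1/2}$. For $f$ smooth and compactly supported in $\olO$, use the integral formula
\begin{equation*}
\chi(\Dirac_e)=\frac{2}{\pi}\int_0^\infty \Dirac_e(1+\lambda^2+\Dirac_e^2)^{-1}\,d\lambda .
\end{equation*}
The commutator $[\Dirac_e,\rho(f)]$ is Clifford multiplication by $\mathrm{grad} f$. This uses that multiplication by $f$ preserves both $\dom \Dirac^-_{\min}$ and $\dom \Dirac^+_{\max}$, which is where the semi-regularity of the boundary condition enters. Expanding the commutator of $\rho(f)$ with the integrand yields terms of the form $\rho(h)$ times resolvents. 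These are compact provided $(\Dirac_e\pm i)^{-1}\rho(h)$ is compact for $h\in C_c(\olO)$, including near the boundary.

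\textbf{Main obstacle.} This compactness up to the boundary is the key step. Here I would invoke that $\dom\Dirac^-_{\min}\subseteq H^1_{0,\loc}$. For $\dom\Dirac^+_{\max}$ one has no a priori $H^1$ regularity, so I would use the graded structure: since $\Dirac_e^2=\Dirac^-_{\min}\Dirac^+_{\max}\oplus\Dirac^+_{\max}\Dirac^-_{\min}$, it suffices to establish compactness of $\rho(h)(1+\Dirac^-_{\min}\Dirac^+_{\max})^{-1/2}$. For this I would use the identity
\begin{equation*}
(1+\Dirac_e^2)^{-1/2}=\Dirac_e(1+\Dirac_e^2)^{-1/2}\cdot(\text{bounded})+(1+\Dirac_e^2)^{-1}
\end{equation*}
to transfer the estimate to the minimal-domain side. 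Alternatively, one can argue as in \cite{Engel2026}, where semi-regularity is exactly what this step requires.

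\textbf{Independence of $\chi$ and identification of the class.} Two normalizing functions differ by an element of $C_0(\RR)$, so the corresponding operators differ by a locally compact operator over $\Omega$. The straight-line homotopy between them is then an operator homotopy through relative modules, and the class is independent of $\chi$.

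Under excision $K_0^\Gamma(\olO,\dO)\cong K_0^\Gamma(\Omega)$, the class is obtained by restricting the representation to $C_0(\Omega)$. It then suffices to compare with $\chi(\Dirac_\Omega)$ for some self-adjoint realisation on $\Omega$ in the sense of Higson--Roe. The standard locality argument applies: for $f\in C_c(\Omega)$, the operators $\rho(f)\bigl(\chi(\Dirac_e)-\chi(\Dirac')\bigr)$ are compact whenever $\Dirac_e$ and $\Dirac'$ agree near $\supp f$. This gives equality in $K^\Gamma_0(\Omega)$.
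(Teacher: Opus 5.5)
The parts of your argument for self-adjointness, oddness, equivariance, local compactness of $I-F^2$ over $\Omega$ (interior regularity plus Rellich) and the excision step are fine. The pseudo-locality argument, however, rests on a false claim, exactly at the step you call the main obstacle. You reduce it to compactness of $(\Dirac_e\pm i)^{-1}\rho(h)$ for $h\in C_c(\olO)$ not vanishing on $\dO$, and then to compactness of $\rho(h)(1+\Dirac^-_{\min}\Dirac^+_{\max})^{-1/2}$. But that operator equals $\rho(h)$ on $\ker\Dirac^+_{\max}$. This kernel is infinite-dimensional and reaches up to the boundary: on a half-cylinder over a upsc manifold $\Sigma$ it is $\{e^{-t\Dirac_\Sigma}u_0 : u_0\in P_>L^2(\Sigma)\}$ by \cref{KernelCylinder}.

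For compact $\olO$ and $h\equiv 1$, your claim would make $(\Dirac_e\pm i)^{-1}$ compact, although it acts as the nonzero scalar $(\pm i)^{-1}$ on the infinite-dimensional space $\ker\Dirac^+_{\max}\subseteq\ker\Dirac_e$. More conceptually, it would make $I-F^2$ locally compact over all of $\olO$. The class would then come from $K_0^\Gamma(\olO)$, and $[\Dirac_{\dO}]=\partial[\Dirac_\Omega]$ would vanish. No identity can transfer this estimate to the minimal side; in yours, the term $(1+\Dirac_e^2)^{-1}$ is still the identity on that kernel.

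The same problem affects two other steps: your unexplained reduction to $\chi(\lambda)=\lambda(1+\lambda^2)^{-1/2}$, and the straight-line homotopy. Both need $[\rho(f),g(\Dirac_e)]$ to be compact for $g\in C_0(\RR)$ and $f$ non-vanishing on $\dO$. Near $\dO$ the operator $g(\Dirac_e)$ is not locally compact, so this does not follow from local compactness over $\Omega$.

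What $\dom\Dirac^-_{\min}\subseteq H^1_{0,\loc}$ actually gives is compactness on the negative half-spinors only. Let $P_-$ be the projection onto them. Then $\rho(h)P_-(\Dirac_e\pm i\mu)^{-1}$ is compact for $h\in C_c(\olO)$ by Rellich, since its range lies in $\dom\Dirac^-_{\min}$. The missing idea is that this suffices, because $c:=[\Dirac_e,\rho(f)]$ is odd. Hence $c=cP_-+P_-c$, and
\begin{equation*}
[\rho(f),(\Dirac_e\pm i\mu)^{-1}]=-(\Dirac_e\pm i\mu)^{-1}\bigl(cP_-(\Dirac_e\pm i\mu)^{-1}\bigr)-\bigl((\Dirac_e\pm i\mu)^{-1}P_-c\bigr)(\Dirac_e\pm i\mu)^{-1},
\end{equation*}
where both bracketed factors are compact; the second is the adjoint of $c^*P_-(\Dirac_e\mp i\mu)^{-1}$.

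Stone--Weierstrass then gives pseudo-locality of $g(\Dirac_e)$ for all $g\in C_0(\RR)$, which justifies your reduction and the homotopy. For $\chi(\Dirac_e)$ itself, insert $\Dirac_e(\mu^2+\Dirac_e^2)^{-1}=\tfrac12\sum_\pm(\Dirac_e\pm i\mu)^{-1}$ with $\mu^2=1+\lambda^2$ into your integral formula. This yields a norm-convergent integral of compact operators, with integrand of norm $O(\mu^{-2})$. Equivalently, write the off-diagonal entries of $\chi(\Dirac_e)$ as $T(1+T^*T)^{-1/2}$ and its adjoint with $T=\Dirac^-_{\min}$, as for Hilsum's half-closed chains.

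The paper gives no proof and cites \cite{Engel2026}. Its remarks indicate that semi-regularity, i.e.\ $H^1_{0,\loc}$-regularity of the minimal domain, is the key input, consistent with the minimal-side-only argument above. Finally, multiplication by smooth $f$ preserving $\dom\Dirac^-_{\min}$ and $\dom\Dirac^+_{\max}$ is automatic and does not use semi-regularity.
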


In our geometric application in \cref{SecAPS} below, we will investigate the image of this relative class under the secondary image map in the presence of uniformly positive scalar curvature. Before that, recall that in \cref{CoarseEqMV} above, we noted that the algebraic Mayer-Vietoris boundary map $K_*(D^*(X,Z)^\Gamma) \to K_{*-1}(D^*(Z)^\Gamma)$ is zero if $Z\hookrightarrow X$ is a coarse equivalence. As a small demonstration of the machinery of \cref{SecSecInd}, we combine this fact with \cref{IndexCommAlgMV} to give an alternative proof of bordism invariance of the coarse index.

\begin{cor}\label{BordInvIndex}
  Let $\olO$ be a complete spin $\Gamma$-manifold with boundary, and suppose that $\dO \hookrightarrow \olO$ is a coarse equivalence of $\Gamma$-spaces. Then,
  $$
    \mathrm{Ind}^\Gamma(\Dirac_{\dO}) = 0 \; .
  $$
\end{cor}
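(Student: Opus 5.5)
The plan is to feed the relative class of the Dirac operator into the commutative square of \cref{IndexCommAlgMV} and then use the vanishing of $\delta$ from \cref{CoarseEqMV}. Write $X=\olO$ and $Z=\dO$. By the proposition from \cite{Engel2026} recalled above, the triple $(L^2(\olO),\rho,\chi(\Dirac_e))$ defines a class $[\Dirac_\Omega]\in K^\Gamma_*(\olO,\dO)$. We may assume $\olO$ is even-dimensional; in the odd case the relative class of the Dirac operator still exists via the excision isomorphism $K^\Gamma_*(\olO,\dO)\cong K^\Gamma_*(\Omega)$, and the argument below is insensitive to the choice of representative.

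The first and main step is to invoke \enquote{boundary of Dirac is Dirac}: the K-homology boundary map satisfies
$$
\partial[\Dirac_\Omega]=[\Dirac_{\dO}]\in K^\Gamma_{*-1}(\dO).
$$
I expect this to be the only genuinely analytic input, and hence the main obstacle. It is a standard result (Baum--Douglas--Taylor in the compact case, Forsyth--Goffeng--Messland--Rennie in the non-compact equivariant setting). I would cite it rather than reprove it, possibly after passing to the quotient $(X_\Gamma,Z_\Gamma)$ via \cref{EquivKhom}(ii), which is compatible with boundary maps. Depending on orientation conventions there may be a sign, but a sign is irrelevant for a vanishing statement.

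Next, apply \cref{IndexCommAlgMV}. This gives
$$
\Ind(\Dirac_{\dO})=\Ind(\partial[\Dirac_\Omega])=\delta\bigl(\secInd[\Dirac_\Omega]\bigr).
$$
Finally, since $\dO\hookrightarrow\olO$ is an equivariant coarse equivalence, the map $K_*(C^*(\dO)^\Gamma)\to K_*(C^*(\olO)^\Gamma)$ is an isomorphism. Exactness of the algebraic Mayer--Vietoris sequence of \cref{AlgebraicMV} at $K_{*-1}(C^*(\dO)^\Gamma)$ then forces the image of $\delta$ to lie in the kernel of the map $K_{*-1}(C^*(\dO)^\Gamma)\to K_{*-1}(C^*(\olO)^\Gamma)\oplus K_{*-1}(D^*(\dO)^\Gamma)$. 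The first component of this map is injective, so that kernel is zero and $\delta=0$, as in \cref{CoarseEqMV}. Hence $\Ind(\Dirac_{\dO})=0$.
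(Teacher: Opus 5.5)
Your proposal is correct and matches the paper's proof: both push $[\Dirac_\Omega]$ around the square of \cref{IndexCommAlgMV}, use boundary-of-Dirac-is-Dirac (reduced to the non-equivariant case via \cref{EquivKhom}) for the top arrow, and conclude from $\delta=0$ as in \cref{CoarseEqMV}. Your exactness argument, that the first component $K_{*-1}(C^*(\dO)^\Gamma)\to K_{*-1}(C^*(\olO)^\Gamma)$ is injective and hence forces $\delta=0$, simply spells out what that remark asserts.
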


\begin{proof}
  Consider the image of $[\Dirac_{\Omega}] \in K_*(\olO,\dO)$ under the maps of the diagram \\
  \centerline{\xymatrix{
    K_{*}^\Gamma(\olO,\dO) \ar[r]^\partial \ar[d]_{\secInd} & K_{*-1}^\Gamma(\dO) \ar[d]^-{\Ind} \\
    K_{*}\left( D^*(\olO,\dO)^\Gamma\right) \ar[r]_-\delta & K_{*-1}\left( C^*(\dO)^\Gamma \right)
  }} 
  Its image under the upper-horizontal map is $[\Dirac_{\dO}] \in K_{*-1}^\Gamma(\dO)$ by boundary-of-Dirac-is-Dirac\footnote{Using \cref{EquivKhom}, the equivariant case of boundary-of-Dirac-is-Dirac can be reduced to the non-equivariant case, which is discussed in e.g. \cite[~Prop. 11.2.5]{HigsonRoe2000}.}. Hence, going right then down computes $\mathrm{Ind}^\Gamma(\Dirac_{\dO})$. On the other hand, since we assumed that $\dO \hookrightarrow \olO$ is a coarse equivalence, the Mayer-Vietoris boundary map $\delta$ is zero, as discussed in \cref{CoarseEqMV}. Thus, going down then right gives zero.
\end{proof}

Now, we lay some groundwork for our application to scalar curvature in \cref{SecAPS}. Assume that $\olO$ has uniformly positive scalar curvature (upsc). By the Lichnerowicz formula, it then holds that
$$
  \|\Dirac^-u\| \geq c \|u\|
$$
for some $c>0$ and every smooth section $u$ with compact support in the interior. It follows that $\Dirac^+_{\max}\Dirac_{\min}^- \geq c^2$, and $\Dirac_{\min}^-\Dirac^+_{\max}$ can only have $0$ as spectral value with absolute value $<c^2$. We pick a normalizing function $\chi$ so that $\chi^2(\lambda)=1$ for all $|\lambda|\geq c$.

\begin{prop}\label{InvolutionModBdry}
  Suppose that $\olO$ has uniformly positive scalar curvature. Then, $\chi(\Dirac_e)$ is in $D^*(\olO)^\Gamma$, and an involution modulo $D^*(\dO\subseteq \olO,\dO)^\Gamma$.
\end{prop}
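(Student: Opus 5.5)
We split the statement into two claims: first $\chi(\Dirac_e)\in D^*(\olO)^\Gamma$, then that $1-\chi(\Dirac_e)^2$ lies in $D^*(\dO\subseteq\olO,\dO)^\Gamma$. Self-adjointness is automatic because $\chi$ is real and $\Dirac_e$ is self-adjoint.

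For the first claim we follow the standard route for complete manifolds. Equivariance holds since $\Dirac_e$ commutes with the $\Gamma$-action. For finite propagation, we first take $\chi$ with compactly supported distributional Fourier transform $\hat\chi$; this is compatible with $\chi^2=1$ off $[-c,c]$ only up to approximation, so we argue by approximation. Write $\chi(\Dirac_e)$ via the wave group $e^{is\Dirac_e}$ and use unit propagation speed. Finite propagation speed for the boundary condition $\Dirac_e$ should follow from the energy estimate: for $u$ in the domain of $\Dirac_e$ the boundary term in $\langle \Dirac u,u\rangle$ vanishes by self-adjointness. Equivalently, we cite the corresponding statement from \cite{Engel2026}. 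Pseudo-locality is part of the relative Fredholm module statement (Prop.~6.8 of \cite{Engel2026}): $[\rho(f),\chi(\Dirac_e)]$ is compact for $f\in C_0(\olO)$. A general $\chi$ is then a uniform limit of such functions, and both $D^*(\olO)^\Gamma$ membership and the later conclusion pass to norm limits.

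For the second claim, let $\chi$ satisfy $\chi^2=1$ on $|\lambda|\ge c$. Then $1-\chi^2(\Dirac_e)=\phi(\Dirac_e)$ with $\phi:=1-\chi^2$ supported in $[-c,c]$. By the spectral remarks before the proposition, $\Dirac_e^2=\Dirac^+_{\max}\Dirac^-_{\min}\oplus\Dirac^-_{\min}\Dirac^+_{\max}$ has spectrum in $\{0\}\cup[c^2,\infty)$, with $\Dirac^+_{\max}\Dirac^-_{\min}\ge c^2$. Hence $\phi(\Dirac_e)=\phi(0)P$, where $P$ is the projection onto $\ker\Dirac_e=0\oplus\ker\Dirac^+_{\max}$. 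Local compactness over $\Omega$ follows from local compactness of $g(\Dirac_e)$ for $g\in C_0(\RR)$ and $f\in C_0(\Omega)$: interior elliptic regularity gives compactness of $\rho(f)g(\Dirac_e)$ (Rellich). This is again part of \cite{Engel2026}, since $1-\chi^2(\Dirac_e)$ being locally compact over $\Omega$ is exactly the relative Fredholm module condition.

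The remaining and main point is that $P$ is, up to norm approximation, supported near $\dO$ with finite propagation. Since $P=\phi(\Dirac_e)$ and the spectrum has a gap, we can choose $\phi$ with $\hat\phi$ of compact support. This gives an operator $\phi(\Dirac_e)$ of finite propagation that agrees with $P$ up to small norm error. Correcting via $\phi(\Dirac_e)=\phi(0)P+(\text{small on } [c,\infty))$, $P$ is a norm limit of finite propagation operators.

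For support near the boundary, take $u\in\ker\Dirac^+_{\max}$ and a cutoff $\eta$ vanishing on an $R$-neighbourhood of $\dO$ with $|\nabla\eta|\le 1/R$. Then $\eta u$ lies in the minimal domain, and the Lichnerowicz estimate applies to it. The estimate must be used for $\Dirac^+$ on compactly supported interior sections, which holds symmetrically on both chiralities by upsc. This yields $c\|\eta u\|\le\|\Dirac^+(\eta u)\|=\|c(d\eta)u\|\le R^{-1}\|u\|$. So $\|\rho(\eta)P\|\le (cR)^{-1}$, which shows $P$ is a norm limit of the operators $\rho(1-\eta)P\rho(1-\eta)$ supported near $\dO$.

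Combining these with finite propagation and truncation (\eqref{TruncMap}), $P\in D^*(\dO\subseteq\olO,\dO)^\Gamma$. I expect this localisation near the boundary to be the main obstacle, in particular keeping finite propagation, local compactness and support near $\dO$ simultaneously in the approximants.
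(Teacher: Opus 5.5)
Your proof is correct, but it takes a more explicit route than the paper. The paper's proof only refers to Roe's boundaryless partial-vanishing argument, run with $\dO$ as the region lacking upsc, with details deferred to \cite{Engel2026} and \cite{LykoThesis}. That argument localizes $f(\Dirac_e)$ near $\dO$ for every $f\in C_0(-c,c)$.

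You instead use that upsc holds on all of $\olO$. Then the spectrum of $\Dirac_e$ meets $(-c,c)$ only in $0$, so $1-\chi^2(\Dirac_e)$ is exactly the kernel projection $P$. (In the paper's ordering its range is $\ker\Dirac^+_{\max}\oplus 0$, not $0\oplus\ker\Dirac^+_{\max}$; this is harmless.) The localization then reduces to the cutoff--Lichnerowicz estimate $c\|\eta u\|\le R^{-1}\|u\|$ for $u\in\ker\Dirac^+_{\max}$, which gives an explicit rate $\|\rho(\eta)P\|\le (cR)^{-1}$. This is more elementary, since you never analyse general spectral subspaces. It is the same identification $1-\chi^2(\Dirac_e)=P_{\ker\Dirac_e}$ that the paper uses in \cref{FormulaCylinder}. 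The price is that it does not cover the generalization in \cref{rem_psc_subset}, where $1-\chi^2(\Dirac_e)$ is no longer a kernel projection and the Roe-type argument is genuinely needed.

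The step you single out as the main obstacle is immediate and needs no truncation. Take normalizing functions $\chi_n\to\chi$ uniformly with compactly supported distributional Fourier transform. Take $\Gamma$-invariant cutoffs $0\le\eta_n\le 1$ vanishing on $N_{R_n}(\dO)$ with $|\nabla\eta_n|\le R_n^{-1}$, and set $Q_n:=\rho(1-\eta_n)\bigl(1-\chi_n(\Dirac_e)^2\bigr)\rho(1-\eta_n)$. Then:
\begin{itemize}
\item $Q_n$ is $\Gamma$-invariant and has finite propagation.
\item It is supported near $\dO$.
\item It is pseudo-local and locally compact over $\Omega$. Both properties are inherited from the relative Fredholm module $\chi_n(\Dirac_e)$, because multiplication operators commute with $\rho(f)$ and $f(1-\eta_n)\in C_0(\Omega)$.
\item It approximates $P$: $\|Q_n-P\|\le\|\chi_n^2-\chi^2\|_\infty+3\|\rho(\eta_n)P\|\le\|\chi_n^2-\chi^2\|_\infty+3(cR_n)^{-1}\to 0$.
\end{itemize}
Using $1-\chi_n^2$ rather than a separately chosen $\phi$ also gives pseudo-locality of the approximants, which your write-up leaves implicit. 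It also removes the need for the smallness correction, which as written would have to hold on $|\lambda|\ge c$, not just on $[c,\infty)$.

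For finite propagation speed of $\Dirac_e$, the local energy estimate needs more than self-adjointness. It also needs $\dom(\Dirac_e)$ to be stable under multiplication by smooth cutoffs, which holds for this local boundary condition. Citing \cite{Engel2026} for this is fine.
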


\begin{proof}
  The proof is analogous to the boundaryless case (see e.g. \cite[Section~2]{Roe2016}). This is sketched in \cite[Remark~6.16]{Engel2026} (see also preceeding results therein), and details are given in the uniform setting in \cite[Section~7.1]{LykoThesis}.
\end{proof}

\begin{rem}\label{rem_psc_subset}
  \cref{InvolutionModBdry} is analogous to the fact that if $M$ is a complete spin manifold without boundary that has upsc outside a subset $Z$, then $\chi(\Dirac_M)$ is an involution modulo $C^*(Z\subseteq M)$, see e.g. \cite[Section~2]{Roe2016}. Since $\Dirac^+_{\max}$ may have non-trivial kernel even in the presence of upsc, the boundary has to be treated as if it did not have positive scalar curvature. Indeed, \cref{InvolutionModBdry} can be straight-forwardly generalized to the situation where $\olO$ only has upsc outside some closed $Z\subseteq\olO$, in which case $\chi(\Dirac_e)$ is an involution modulo $D^*(\dO\cup Z \subseteq \olO,\dO)$. We will not need this, however.
\end{rem}

Thanks to \cref{InvolutionModBdry}, if $\olO$ has uniformly positive scalar curvature, $\chi(\Dirac_e)$ defines an element in the K-theory of $D^*(\olO)^\Gamma/D^*(\dO\subseteq\olO,\dO)$. We denote its image under the boundary map
\begin{equation}\label{refined_index}
  K_1\left( \frac{D^*(\olO)^\Gamma}{D^*(\dO\subseteq\olO,\dO)^\Gamma} \right)  \xlongrightarrow{\partial} K_0\left( D^*(\dO\subseteq\olO,\dO)^\Gamma \right) \cong K_0\left( D^*(\dO)^\Gamma \right)
\end{equation}
by $\Ind_{\dO}(\Dirac)\in K_0( D^*(\dO)^\Gamma )$. By naturality of the boundary map, there is a commutative square \\
\centerline{\xymatrix{
  K_1\left( \frac{D^*(\olO)^\Gamma}{D^*(\dO\subseteq\olO,\dO)^\Gamma} \right) \ar[r] \ar[d]_{\Ind_{\dO}} & K^\Gamma_0(\olO,\dO) \ar[d]^{\secInd} \\
  K_0\left( D^*(\dO)^\Gamma \right) \ar[r] & K_0\left( D^*(\olO,\dO)^\Gamma \right)
}}
so that $\Ind_{\dO}(\Dirac)$ refines $\secInd(\Dirac)$.

\section{Application to positive scalar curvature}\label{SecAPS}

Let $\Sigma$ be an odd-dimensional complete spin manifold with a free and proper action of $\Gamma$ by isometries. If $\Sigma$ carries a upsc-metric, then $\Dirac_\Sigma$ is invertible. In this case, the spectral projection $P_>:=\mathbf{1}_{>0}(\Dirac_\Sigma)$ is a projection in $D^*(\Sigma)^\Gamma$ that provides a lift for $[\Dirac_\Sigma] \in K_1^\Gamma(\Sigma)$ along
$$
  K_0\left(D^*(\Sigma)^\Gamma\right) \longrightarrow K_0\left( \frac{D^*(\Sigma)^\Gamma}{C^*(\Sigma)^\Gamma} \right) \cong K_1^\Gamma(\Sigma) \; .
$$

\begin{defn}
  The \emph{higher $\rho$-invariant} $\rho^\Gamma(\Dirac_\Sigma)$ is defined as the class of $P_>$ in $K_0(D^*(\Sigma)^\Gamma)$.
\end{defn}

Now, suppose that $\Sigma$ is the boundary of an even-dimensional spin manifold carrying a free and proper action of $\Gamma$ by isometries that extends the $\Gamma$-action on $\Sigma$. Suppose that $\olO$ has product form near the boundary. Then, we may attach the semi-infinite cylinder $\Sigma_\infty := \RR_{\leq 0}\times\Sigma$ (endowed with the product metric) to form the complete, boundaryless manifold $M:=\Sigma_\infty \cup_\Sigma \olO$. Since $\Sigma$ has upsc, $M$ has upsc away from $\olO$. Specifically, suppose the scalar curvature on the complement of $\olO$ is $\geq\frac{c^2}{4}$ for some $c>0$. Let $\chi$ be a normalizing function with $\chi(\lambda)^2=1$ for all $|\lambda|\geq c$. Then, $\chi(\Dirac_M)\in D^*(M)^\Gamma$ is an involution modulo $C^*(\olO \subseteq M)$ (see e.g. \cite[Section~2]{Roe2016}, also compare \cref{rem_psc_subset}). Hence, it provides a lift of the class $[\Dirac_M] \in K^\Gamma_0(M)$ to $K_1(D^*(M)^\Gamma/C^*(\olO\subseteq M)^\Gamma)$. 

\begin{defn}
  The \emph{delocalized APS-index} $\Ind_\mathrm{APS}(\Dirac_\Omega)$ is defined as the image of $[\chi(\Dirac_M)]$ under the boundary map
    $$
      K_1\left(\frac{D^*(M)^\Gamma}{C^*(\olO\subseteq M)^\Gamma}\right) \xlongrightarrow{\partial} K_0\left( C^*(\olO\subseteq M)^\Gamma \right) \cong K_0\left( C^*(\olO)^\Gamma \right) \; .
    $$
\end{defn}

Both $K_0(D^*(\Sigma)^\Gamma)$ and $K_0(C^*(\olO)^\Gamma)$ admit maps into $K_0(D^*(\olO)^\Gamma)$ induced by the respective inclusion. We will keep these maps implicit in our notation. The delocalized APS-index theorem states that
$$
  \Ind_\mathrm{APS}(\Dirac_\Omega) = - \rho^\Gamma(\Dirac_\Sigma) \; \in \; K_0\left(D^*(\olO)^\Gamma\right) \; .
$$
In the even-dimensional case (which we work in), it was proved by Piazza and Schick \cite{Piazza2014}. The general case was established by Xie and Yu using localization algebra techniques \cite{Xie2014}. Both Piazza-Schick and Xie-Yu assume the action of $\Gamma$ on $\olO$ to be cocompact. We note that the negative sign in the index formula is not present in those works, see \cref{SignAPS} below.

\begin{thm}\label{RelIndRho}
Let $\olO$ be an even-dimensional, collared spin manifold carrying a free and proper action of $\Gamma$ by spin isometries.
\begin{itemize}
  \item[(i)] If $\olO$ has uniformly positive scalar curvature, then\footnote{Recall that $\Ind_{\dO}$ denotes the refinement of the secondary index map defined in \eqref{refined_index}, and that $\Dirac_\Omega$ is in its domain, since the upsc-assumption implies $\chi(\Dirac_\Omega)$ to be an involution modulo $D^*(\dO\subseteq\olO,\dO)$ by \cref{InvolutionModBdry}.}
  $$
    \Ind_{\dO}(\Dirac_\Omega) = \rho^\Gamma(\Dirac_{\dO}) \; \in \; K_{0}\left( D^*(\dO)^\Gamma\right) \; .
  $$
  \item[(ii)] If $\olO$ only has uniformly positive scalar curvature near the boundary, then
  $$
    \secInd(\Dirac_\Omega) = \Ind_{APS}(\Dirac_\Omega) + \rho(\Dirac_{\dO}) \; \in \; K_{0}\left( D^*(\olO,\dO)^\Gamma\right) \; .
  $$
\end{itemize}
\end{thm}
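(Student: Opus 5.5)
For (i), I will first reduce to the case of a half-cylinder, in the spirit of the abstract: the proof goes by an explicit computation on a cylinder. Since $\olO$ is collared, it contains a collar $[0,1)\times\dO$. Choose a cut-off function $\phi$ which is $1$ near $\dO$ and supported in the collar. Then $\chi(\Dirac_e)$ and $\phi\chi(\Dirac_e)\phi + (1-\phi^2)^{1/2}\chi(\Dirac_e)(1-\phi^2)^{1/2}$ differ by an element of $D^*(\dO\subseteq\olO,\dO)^\Gamma$ only up to terms which are locally compact and supported near $\dO$. Finite propagation speed arguments control the interior part via upsc, where $\chi^2=1$ holds modulo $C^*$ by \cref{InvolutionModBdry}. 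Using \cref{RelLocalized}, the class $\Ind_{\dO}(\Dirac_\Omega)$ should then depend only on a neighbourhood of $\dO$. Hence I would compare with the half-cylinder $Y=\RR_{\geq0}\times\dO$, which has product upsc metric and the same extension $\Dirac_e$. Concretely, I would either glue or use a relative version of the Kasparov/locality principle: both manifolds contain isometric neighbourhoods of the boundary, and the refined index factors through $K_0(D^*(\dO\subseteq\cdot,\dO)^\Gamma)\cong K_0(D^*(\dO)^\Gamma)$.

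On the half-cylinder $Y$, I will compute explicitly. Write $\Dirac^+ = \sigma(\partial_t + A)$ with $A=\Dirac_{\dO}$ invertible. Then $\Dirac^-_{\min}$ has zero kernel and $\Dirac^+_{\max}$ has kernel equal to the space of $e^{-tA}P_>u$, i.e. $L^2$ solutions decaying along the cylinder. Choosing $\chi$ with $\chi^2=1$ off $(-c,c)$, we get $1-\chi(\Dirac_e)^2$ supported spectrally near $0$, which is essentially the projection onto $\ker\Dirac^+_{\max}$ (on the even part) modulo small perturbations. The standard formula for the boundary map of a self-adjoint odd involution-mod-ideal $F$ in $K_1$ of the quotient gives $\partial[F] = [\ker F^+] - [\ker F^-]$ when $F$ is a partial isometry modulo that kernel, i.e. the class of the projection onto $\ker\Dirac^+_{\max}$. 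I would then identify this projection, via the unitary $L^2(\RR_{\geq0})\otimes L^2(\dO)\supseteq \{e^{-tA}P_>u\}\cong P_>L^2(\dO)$ (normalized by $(2A)^{1/2}$), with $P_>$ under the isomorphism of \cref{RelLocalizedAisZ}. The key check is that the projection onto $\{e^{-tA}P_>u\}$ is a compression-type element of $D^*(\dO\subseteq Y,\dO)^\Gamma$. In $K$-theory it equals $\tau(P_>)$, the upper-corner inclusion, because the two are homotopic through the family obtained by rescaling $t\mapsto st$, $s\to\infty$, which concentrates the sections at $\dO$. I expect this step to be the main obstacle: the rescaling does not converge in norm to an operator on $H_{\dO}$ (multiplication by $\mathbf 1_{\dO}$ is a measure-zero issue for $L^2$). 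So I would instead choose the very ample representation as in \cref{IndexCommAlgMV}, modelling a neighbourhood of $\dO$ by $L^2([0,1])\otimes L^2(\dO)$, and use that $D^*(\dO\subseteq Y,\dO)$ agrees modulo locally compact pieces away from $\dO$ with $\mathfrak K\otimes D^*(\dO)$-type operators. Under this identification, a rank-one projection in the $t$-variable tensored with $P_>$ represents $\tau_*(P_>)$.

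For (ii), I would apply (i) in a decomposition. Form $M=\Sigma_\infty\cup\olO$ and consider the pair $(\olO\sqcup \text{collar},\dots)$. More efficiently, I would use the Mayer–Vietoris decomposition $D^*(\olO,\dO)^\Gamma=C^*(\olO)^\Gamma+D^*(\dO\subseteq\olO,\dO)^\Gamma$ of \cref{AlgebraicMV}. The operator $\chi(\Dirac_e)$ is an involution modulo $C^*(K\subseteq\olO)^\Gamma+D^*(\dO\subseteq\olO,\dO)^\Gamma$, where $K$ is the non-upsc region away from the boundary. Its boundary class then splits, by the locality used in (i), into a contribution near $\dO$ and a contribution near $K$. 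The near-$\dO$ term agrees with the half-cylinder computation, giving $\rho^\Gamma(\Dirac_{\dO})$. The near-$K$ term agrees, by the same locality principle comparing $\olO$ with $M$ away from the boundary, with the image of $\chi(\Dirac_M)$ in $K_0(C^*(\olO\subseteq M)^\Gamma)$, i.e. $\Ind_\mathrm{APS}(\Dirac_\Omega)$. Additivity of the boundary map under this orthogonal splitting (cutting functions supported far apart in a wide enough collar) then yields the formula. The sign conventions would be checked against \cref{SignAPS}.
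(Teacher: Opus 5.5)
There is a genuine gap in the step both parts rest on. You assert a locality principle: that $\Ind_{\dO}(\Dirac_\Omega)$ only sees a collar of $\dO$, and that $\secInd(\Dirac_\Omega)$ splits into a near-$\dO$ term and a near-$K$ term. The tools you cite do not prove it. From $F:=\chi(\Dirac_e)\equiv\phi F\phi+\sqrt{1-\phi^2}\,F\sqrt{1-\phi^2}$ modulo the ideal, nothing follows about $\partial[F]$:
\begin{itemize}
\item neither summand is an involution modulo the ideal on its own (e.g.\ $(\phi F\phi)^2\equiv\phi^4$);
\item the two summands are not orthogonal, so there is no additivity of $\partial$ to invoke;
\item \cref{RelLocalized} only identifies the target group. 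It says nothing about how the class depends on $\olO$ away from $\dO$.
\end{itemize}
Cutting instead with indicator functions along an interior hypersurface $\{r\}\times\dO$ does give orthogonal pieces. But then the off-diagonal blocks fail to be locally compact at the cut, so they are not in your ideal.

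In (ii) the near-$\dO$/near-$K$ splitting is moreover not well defined. $K$ may be metrically far from $\dO$ yet at finite Hausdorff distance from it, in which case $C^*(K\subseteq\olO)^\Gamma=C^*(\dO\subseteq\olO)^\Gamma$. Also, cutting functions with far-apart supports cannot form a partition of unity.

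The idea you are missing is to cut exactly along $\dO$, where the ideal already absorbs the off-diagonal terms. Glue the reversed half-cylinder $\Sigma_\infty$ to obtain $M$, and prove, as in \cref{decomp}:
\begin{itemize}
\item $\mathbf 1_{\olO}\chi(\Dirac_M)\mathbf 1_{\Sigma_\infty}\in D^*(\Sigma\subseteq M,\Sigma)^\Gamma$;
\item the diagonal blocks agree, modulo the respective ideals, with $\chi(\Dirac_e)$ on $\olO$ and on $\Sigma_\infty$. This is where finite-propagation approximants of $\chi$ are actually used.
\end{itemize}
Then apply additivity of $\partial$ to $L^2(M)=L^2(\olO)\oplus L^2(\Sigma_\infty)$. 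This gives $\Ind_{\mathrm{APS}}(\Dirac_\Omega)=\secInd(\Dirac_\Omega)+\Ind_\Sigma(\Dirac_{\Sigma_\infty})$. In case (i), $M$ is upsc, so $\partial[\chi(\Dirac_M)]=0$ and the same argument gives $0=\Ind_{\dO}(\Dirac_\Omega)+\Ind_\Sigma(\Dirac_{\Sigma_\infty})$. The APS term thus appears by definition. No comparison with a forward cylinder, or with $M$ near $K$, is needed. Only $\Sigma_\infty$ has to be computed, and orientation reversal gives $-\rho^\Gamma(\Dirac_\Sigma)$.

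Your cylinder computation matches \cref{KernelCylinder} and the first half of \cref{FormulaCylinder}: the boundary class is $[P_{\max}]$, with $P_{\max}$ the projection onto $\ker\Dirac^+_{\max}$. However, you do not supply the comparison of $[P_{\max}]$ with the image of $[P_>]$. The version you sketch fails for the usual multiplication representation on $L^2([0,1])\otimes L^2(\dO)$. For a fixed unit vector $g$ in the $t$-variable, $p_g\otimes P_>$ is not pseudo-local. Indeed, for $f=\alpha(t)h(x)$ one has $[\rho(f),p_g\otimes P_>]\equiv[\alpha,p_g]\otimes P_>h$ modulo compacts, and $P_>h$ is not compact for $h\neq0$. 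The constant-profile projection $V_\Sigma P_>V_\Sigma^*$ at the end of the paper's proof of \cref{FormulaCylinder} is of exactly this form, so you cannot simply borrow that step either.

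Your rescaling heuristic points to a route that should work. The map $W\colon u_0\mapsto(2|\Dirac_\Sigma|)^{1/2}e^{-t|\Dirac_\Sigma|}u_0$ is an isometry $L^2(\Sigma)\to L^2(\RR_{\geq0}\times\Sigma)$ with $WP_>W^*=P_{\max}$, and its profiles concentrate at $t=0$ at high spectral values. What remains to be checked is that $W$ uniformly covers the inclusion $\Sigma\hookrightarrow\RR_{\geq0}\times\Sigma$. Then $\Ad(W)_*$ is the isomorphism of \cref{RelLocalizedAisZ}, and the identification follows.
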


Before we come to the proof of \cref{RelIndRho}, we deduce the delocalized APS-index theorem as a corollary. Note we manage without the assumption of a cocompact $\Gamma$-action.

\begin{cor}
  Let $\olO$ be as in \cref{RelIndRho}.
  \begin{itemize}
    \item[(i)] If $\olO$ has uniformly positive scalar curvature, then
    $$
      \rho^\Gamma(\Dirac_{\dO}) = 0 \; \in \; K_{0}\left( D^*(\olO)^\Gamma\right) \; .
    $$
    \item[(ii)] If $\olO$ only has uniformly positive scalar curvature near the boundary, then
    $$
      \Ind_{APS}(\Dirac_\Omega) = - \rho(\Dirac_{\dO}) \; \in \; K_{0}\left( D^*(\olO)^\Gamma\right) \; .
    $$
\end{itemize}
\end{cor}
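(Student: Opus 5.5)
The corollary follows from \cref{RelIndRho} by pushing its formulas into $K_0(D^*(\olO)^\Gamma)$, where the secondary index becomes zero.

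\emph{Step 1: the secondary index dies in $K_0(D^*(\olO)^\Gamma)$.} By definition, $\secInd$ is the boundary map $\partial$ of the long exact sequence for the extension
$$
0\to D^*(\olO,\dO)^\Gamma\to D^*(\olO)^\Gamma\to D^*(\olO)^\Gamma/D^*(\olO,\dO)^\Gamma\to 0,
$$
precomposed with the isomorphism of \cref{EquivKhom}. By exactness, the composition of $\partial$ with the map induced by the inclusion $D^*(\olO,\dO)^\Gamma\hookrightarrow D^*(\olO)^\Gamma$ is zero. Hence $\secInd(\Dirac_\Omega)$ maps to $0$ in $K_0(D^*(\olO)^\Gamma)$.

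\emph{Step 2: part (ii).} Push the formula of \cref{RelIndRho}(ii) into $K_0(D^*(\olO)^\Gamma)$. We must check that the implicit maps agree. The summand $\Ind_\mathrm{APS}(\Dirac_\Omega)\in K_0(C^*(\olO)^\Gamma)$ is first included into $K_0(D^*(\olO,\dO)^\Gamma)$ via the Mayer--Vietoris map of \cref{AlgebraicMV}, and then into $K_0(D^*(\olO)^\Gamma)$. This composite is induced by the chain of inclusions $C^*(\olO)^\Gamma\subseteq D^*(\olO,\dO)^\Gamma\subseteq D^*(\olO)^\Gamma$, so it is the implicit inclusion map of the corollary.

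For the summand $\rho(\Dirac_{\dO})\in K_0(D^*(\dO)^\Gamma)$, the map into $K_0(D^*(\olO,\dO)^\Gamma)$ is the upper-corner inclusion $D^*(\dO)^\Gamma\to D^*(\dO\subseteq\olO,\dO)^\Gamma\subseteq D^*(\olO,\dO)^\Gamma$ (up to the sign conventions of the Mayer--Vietoris sequence, which were fixed in the statement of \cref{RelIndRho}). Composing with $D^*(\olO,\dO)^\Gamma\subseteq D^*(\olO)^\Gamma$ gives the map induced by the inclusion $\dO\hookrightarrow\olO$.

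With these identifications, Step 1 gives
$$
0=\Ind_\mathrm{APS}(\Dirac_\Omega)+\rho(\Dirac_{\dO})\in K_0(D^*(\olO)^\Gamma),
$$
which rearranges to the claim in (ii).

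\emph{Step 3: part (i).} Use the commutative square following \eqref{refined_index}: the image of $\Ind_{\dO}(\Dirac_\Omega)$ in $K_0(D^*(\olO,\dO)^\Gamma)$ is $\secInd(\Dirac_\Omega)$. By \cref{RelIndRho}(i), $\Ind_{\dO}(\Dirac_\Omega)=\rho^\Gamma(\Dirac_{\dO})$. So the image of $\rho^\Gamma(\Dirac_{\dO})$ under
$$
K_0(D^*(\dO)^\Gamma)\to K_0(D^*(\olO,\dO)^\Gamma)\to K_0(D^*(\olO)^\Gamma)
$$
equals the image of $\secInd(\Dirac_\Omega)$, which vanishes by Step 1. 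As in Step 2, this composite is the map induced by $\dO\hookrightarrow\olO$, which proves (i).

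\emph{Main obstacle.} The only delicate point is the bookkeeping in Step 2. The maps in \eqref{IntroCommDiag}, the Mayer--Vietoris map, and the upper-corner inclusions must all coincide with the naive inclusion-induced maps into $K_0(D^*(\olO)^\Gamma)$, and any Mayer--Vietoris sign must be tracked through to the final formula. This is checked directly at the level of $C^*$-algebra inclusions.
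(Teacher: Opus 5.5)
Your proposal is correct and follows the paper's proof: both push the formulas of \cref{RelIndRho} into $K_0(D^*(\olO)^\Gamma)$ and use exactness of the K-theory long exact sequence to see that the secondary index vanishes there. The only cosmetic difference is in (i). The paper applies exactness directly to the extension with ideal $D^*(\dO\subseteq\olO,\dO)^\Gamma$ that defines $\Ind_{\dO}$, whereas you route through the refinement square to reduce to $\secInd$; both arguments are equally valid.
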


\begin{proof}
  The secondary index maps $\Ind_{\dO}$ and $\secInd$ are defined as K-theoretic boundary maps on the quotient of $D^*(\olO)^\Gamma$ by the respective ideal. By exactness, their images lie in the kernel of the respective inclusion maps into $K_0(D^*(\olO)^\Gamma)$. Hence, $\Ind_{\dO}(\Dirac_\Omega)$ and $\secInd(\Dirac_\Omega)$ vanish upon being pushed into $K_0(D^*(\olO)^\Gamma)$, so that the claim follows from the respective formulas in \cref{RelIndRho}.
\end{proof}

\begin{rem}\label{SignAPS}
  In other treatments, the delocalized APS-index theorem (\cite{Piazza2014,Xie2014,Zeidler2016}) does not have the sign that ours does. This is a consequence of a difference in choice of orientation. Indeed, in the other treatments of this theorem the orientation on $M$ is chosen such that the attached cylinder is positively oriented, meaning that the orientation on $\Omega$ is reversed. While this results in a sign-free delocalized APS-index theorem, it is the less natural choice from our point of view. Since our primary interest was the relative index $\secInd(\Dirac_\Omega)$, it made sense for us to choose that orientation on $M$ which coincides with the given one on $\Omega$. This resulted in the sign.
\end{rem}

The remainder of this section is dedicated to the proof of \cref{RelIndRho}. We approach it via reduction to an explicit computation on a cylinder. Hence, we will begin with an analysis of that case. We denote the boundary $\dO$ by $\Sigma$. Recall that we set $P_>:=\mathbf{1}_{>0}(\Dirac_\Sigma)$, and set $P_<:=\mathbf{1}_{<0}(\Dirac_\Sigma)$.

\begin{lem}\label{KernelCylinder}
  Let $\Sigma$ be an odd-dimensional complete spin manifold with uniformly positive scalar curvature. Consider the operator $D = \partial_t + \Dirac_\Sigma$ on the cylinder $\RR_{\geq 0}\times \Sigma$. It holds that
  $$
    \ker(D_{\max}) = \left\{ u(t,x) = e^{-t\Dirac_\Sigma}u_0 \, | \, u_0\in P_>L^2(\Sigma) \right\} \; .
  $$
\end{lem}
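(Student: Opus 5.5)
Here $D_{\max}$ is the maximal closed extension: the adjoint of the formal adjoint $-\partial_t+\Dirac_\Sigma$ restricted to $C_c^\infty$ of the open cylinder $(0,\infty)\times\Sigma$. Its domain consists of all $u\in L^2(\RR_{\geq 0}\times\Sigma)$ with $Du\in L^2$ in the distributional sense. The plan is to reduce the kernel equation to a family of scalar ODEs via the spectral theorem for $\Dirac_\Sigma$. Because $\Sigma$ is complete, $\Dirac_\Sigma$ is essentially self-adjoint. Upsc and the Lichnerowicz formula then give a spectral gap: $\mathrm{spec}(\Dirac_\Sigma)\cap(-c,c)=\emptyset$ for some $c>0$, so $L^2(\Sigma)=P_>L^2(\Sigma)\oplus P_<L^2(\Sigma)$. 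We identify $L^2(\RR_{\geq 0}\times\Sigma)\cong L^2(\RR_{\geq 0};L^2(\Sigma))$.

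\emph{Inclusion ``$\supseteq$''.} For $u_0\in P_>L^2(\Sigma)$ set $u(t)=e^{-t\Dirac_\Sigma}u_0$. By the spectral theorem, $\|u(t)\|\leq e^{-ct}\|u_0\|$, so $u\in L^2$. For a test section $\varphi\in C_c^\infty((0,\infty)\times\Sigma)$, we compute $\langle u,(-\partial_t+\Dirac_\Sigma)\varphi\rangle$. Using self-adjointness of $\Dirac_\Sigma$ and the fact that $t\mapsto u(t)$ is a $C^1$ map into $L^2(\Sigma)$ on $(0,\infty)$ with $u'(t)=-\Dirac_\Sigma u(t)$, integration by parts in $t$ shows this pairing is zero. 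Hence $u\in\dom(D_{\max})$ with $D_{\max}u=0$.

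\emph{Inclusion ``$\subseteq$''.} Let $u\in\ker D_{\max}$.
\begin{enumerate}
\item Test against sections of the form $\varphi(t)\psi$, where $\varphi\in C_c^\infty(0,\infty)$ and $\psi\in C_c^\infty(\Sigma)$; these span a core. This shows that, weakly in $L^2(\Sigma)$, $\partial_t u=-\Dirac_\Sigma u$ holds as a distributional identity on $(0,\infty)$.
\item Apply a spectral cutoff $E_{[a,b]}=\mathbf{1}_{[a,b]}(\Dirac_\Sigma)$ with bounded interval. Since $E_{[a,b]}$ commutes with $\Dirac_\Sigma$ and maps $C_c^\infty$-type test functions into the domain of the formal adjoint, after a density argument $v=E_{[a,b]}u$ satisfies $\partial_t v=-\Dirac_\Sigma v$ with bounded generator. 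Standard vector-valued ODE uniqueness then gives $v(t)=e^{-t\Dirac_\Sigma}v(t_0)$, which determines a well-defined $v_0$ with $v(t)=e^{-t\Dirac_\Sigma}v_0$ for $t>0$.
\item Determine $v_0$. For the part with $[a,b]\subseteq(-\infty,-c]$, the norm $\|v(t)\|\geq e^{ct}\|v_0\|$ grows exponentially, contradicting $v\in L^2(\RR_{\geq0};L^2(\Sigma))$ unless $v_0=0$. Hence $P_<u=0$.
\item For the $P_>$-part, the pieces $E_{[a,b]}u$ over a partition of $[c,\infty)$ assemble to $u(t)=e^{-t\Dirac_\Sigma}u_0$, with $u_0=\lim_{t\to0}u(t)$. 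Here $\|u\|_{L^2}^2=\int\langle e^{-2t\lambda}\rangle\,d\mu_{u_0}=\frac{1}{2}\|\Dirac_\Sigma^{-1/2}u_0\|^2$. This identity shows that $u_0$ a priori lies in the completion of $P_>L^2(\Sigma)$ with respect to the $H^{-1/2}$-type norm rather than in $L^2(\Sigma)$. This is the main obstacle.
\end{enumerate}

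The obstacle arises because $\dom(D_{\max})$ imposes no regularity at $t=0$. To handle it, I would first test whether $u_0$ can be shown to lie in $L^2$ using that $\|u(t)\|_{L^2(\Sigma)}$ is nonincreasing on $P_>$ and that $u\in L^2$. If this fails, the lemma must be read with $u_0$ ranging over the $\|\Dirac_\Sigma^{-1/2}\cdot\|$-completion of $P_>L^2(\Sigma)$; equivalently, $u$ is determined by $u(t_0)\in P_>L^2$ for any $t_0>0$. In that case I would check against the paper's intended use, which presumably only needs the description of the kernel via $P_>$ together with exponential decay. Steps 1--3 are routine; the substantive point is this regularity issue at the boundary.
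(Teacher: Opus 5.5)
\textbf{The Step~4 obstacle is real, and the lemma is false as stated.} You cannot close the gap you isolate, and you should not hedge on it. Take $\Sigma=S^n$, $n$ odd, with the round metric. Choose orthonormal eigenspinors $e_j$ with $\Dirac_\Sigma e_j=\lambda_j e_j$, $\lambda_j=\tfrac n2+j$, and set $u(t)=\sum_{j\geq 1}j^{-1/4}e^{-t\lambda_j}e_j$.
\begin{itemize}
\item $u$ is smooth on $(0,\infty)\times\Sigma$ and satisfies $Du=0$.
\item $\|u\|_{L^2}^2=\sum_j j^{-1/2}/(2\lambda_j)<\infty$, so $u\in\ker(D_{\max})$.
\item $\|u(t)\|^2=\sum_j j^{-1/2}e^{-2t\lambda_j}\to\infty$ as $t\to 0$.
\item But $\|e^{-t\Dirac_\Sigma}u_0\|\leq\|u_0\|$ for every $u_0\in P_>L^2(\Sigma)$, so $u$ is not in the right-hand side.
\end{itemize}

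\textbf{Your fallback is the paper's argument, and it fails at the same point.} You propose to use monotonicity of $\|u(t)\|$ on the $P_>$-part together with $u\in L^2$. A function that is nonincreasing in $t$ and integrable on $(0,\infty)$ can still blow up at $t=0$, like $t^{-1/2}$. The paper first kills $P_<u_t$ by an exponential-growth argument analogous to your Step~3. It then asserts
\[
\|u_t\|^2-\|u_{t_0}\|^2=-\int_t^{t_0}\tfrac{d}{ds}\|u_s\|^2\,ds\leq 2c\int_t^{t_0}\|u_s\|^2\,ds .
\]
However, $-\tfrac{d}{ds}\|u_s\|^2=2\langle u_s,\Dirac_\Sigma u_s\rangle\geq 2c\|u_s\|^2$, so this inequality runs the wrong way. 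No upper bound is available, since $\Dirac_\Sigma$ is unbounded on $P_>L^2(\Sigma)$.

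\textbf{What your Steps~1--4 actually prove is the corrected statement, and you should commit to it.} Let $\mathcal{H}$ be the completion of $P_>L^2(\Sigma)$ in the norm $\|\Dirac_\Sigma^{-1/2}\,\cdot\,\|$. For compact $\Sigma$ this is the $P_>$-part of $H^{-1/2}(\Sigma)$, in line with the hybrid trace space of B\"ar--Ballmann. Then:
\begin{itemize}
\item $\ker(D_{\max})=\{e^{-t\Dirac_\Sigma}u_0 : u_0\in\mathcal{H}\}$, with $\|u\|^2=\tfrac12\|\Dirac_\Sigma^{-1/2}u_0\|^2$.
\item $u(t)\to u_0$ in the $\mathcal{H}$-norm, not in $L^2(\Sigma)$. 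So your phrase ``$u_0=\lim_{t\to 0}u(t)$'' must be read in that norm.
\item Equivalently, $w\mapsto e^{-t\Dirac_\Sigma}(2\Dirac_\Sigma)^{1/2}w$ is a unitary from $P_>L^2(\Sigma)$ onto $\ker(D_{\max})$.
\end{itemize}
Your ``$\supseteq$'' argument extends verbatim to $u_0\in\mathcal{H}$, because $e^{-t\Dirac_\Sigma}$ maps $\mathcal{H}$ into $\bigcap_k\dom(\Dirac_\Sigma^k)$ for $t>0$.

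\textbf{Your route is also more robust than the paper's.} The paper deduces from interior regularity alone that each slice $u_t$ lies in $\dom(\Dirac_\Sigma)$ and depends differentiably on $t$ in $L^2(\Sigma)$. On non-compact $\Sigma$ this needs justification, and your spectral cut-offs with bounded generators avoid it.

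\textbf{On downstream use,} your guess is too optimistic. In \cref{FormulaCylinder}, $P_{\max}$ projects onto this larger space. The computation there describes $P_R$ and the deformation $P_R^s$ via $u_0\in P_>L^2(\Sigma)$, so it has to be re-checked with $\mathcal{H}$ in place of $P_>L^2(\Sigma)$, for instance using the unitary above.
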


\begin{proof}
  We have to find all $u\in L^2(\RR_{\geq 0}\times\Sigma)$ such that $(\partial_t + \Dirac_\Sigma)u=0$ in the sense of distributions. Due to interior regularity for the elliptic operator $D=\partial_t + \Dirac_\Sigma$, any such $u$ must be smooth in the interior. From this, one finds $u_t = u(t,-)$ to be a well-defined element of $\dom(\Dirac_\Sigma)$ for all $t>0$. We wish to check that the limit $u_0:=\lim_{t\to 0}u_t$ exists. Once this is done, standard results on Cauchy problems in Hilbert spaces (see e.g. \cite[~Prop. 6.6]{Schmudgen2012}) yield that $u_t = e^{-t\Dirac_\Sigma}u_0$, which can only be square-integrable if $P_<u_0=0$.

  Existence of the $L^2$-limit can likely be derived from standard trace theorems (e.g. \cite[~Chapter 1]{Lions1972}). However, we will take a more pedestrian approach making use of the spectral gap for $\Dirac_\Sigma$. To that end, observe that 
  $$
    \frac{d}{dt} \| P_< u_t \|^2 = - 2 \langle P_< u_t, \Dirac_\Sigma P_< u_t \rangle \geq 2c \|P_< u_t\|^2 \; ,
  $$
  where we used $|\Dirac_\Sigma|\geq c>0$. Hence, $\| P_< u_t \|^2$ is a non-decreasing function for $t>0$. Since $u$ is square-integrable, this can only be if $P_<u_t = 0$ for all $t>0$. Hence, 
  $$
    \frac{d}{dt} \| u_t \|^2 = - 2 \langle u_t, \Dirac_\Sigma u_t \rangle \leq -2c \|u_t\|^2 \; .
  $$
  It follows that $\|u_t\|^2$ has a finite limit as $t\to 0$: Since $\|u_t\|^2$ is non-increasing, it suffices to show that $\|u_t\|^2$ is bounded for $t\to 0$. Fix some $t_0 > 0$. Then,
  $$
    \|u_t\|^2- \|u_{t_0}\|^2  =  -\int_{t}^{t_0} \frac{d\|u_s\|^2}{ds} \, ds  \leq 2c \int_t^{t_0} \|u_s\|^2 \, ds  \leq 2c\|u\|^2 < \infty \; ,
  $$
  where Tonelli's theorem justifies the final estimate. This proves boundedness.
  
  We use this to show that $u_t$, $t>0$, is a Cauchy net. Let $t>s>0$. It holds that $u_t = e^{-(t-s)\Dirac_\Sigma}u_s$, from which we may conclude that $\langle u_t, u_s \rangle = \|u_{\frac{t-s}{2}}\|^2$. Thus
  $$
    \|u_t - u_s\|^2 = \|u_t\|^2 - 2\|u_{\frac{t-s}{2}}\|^2 + \|u_s\|^2 \xrightarrow{t,s\to 0} 0\; .
  $$
  Hence, the limit $u_0 = \lim_{t\to 0}u_t$ exists, which, as discussed above, completes the proof.
\end{proof}

Having identified this kernel, we can prove \cref{RelIndRho} (i) in the case of a cylinder by direct computation.

\begin{lem}\label{FormulaCylinder}
  Let $\Sigma$ be an odd-dimensional complete spin manifold with a free, proper, isometric $\Gamma$-action and uniformly positive scalar curvature. Consider $\olC:= \RR_{\geq 0}\times \Sigma$ with the product metric, and slice-wise $\Gamma$-action. Then,
  $$
    \Ind_{\Sigma}(\Dirac_C) = \rho^\Gamma(\Dirac_{\Sigma}) \; \in \; K_{0}\left( D^*(\Sigma)^\Gamma\right) \; .
  $$
\end{lem}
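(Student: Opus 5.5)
We compute the boundary map \eqref{refined_index} on the class $[\chi(\Dirac_e)]$ explicitly, using the standard formula for the even boundary map: if $F$ is an odd self-adjoint lift of an involution modulo an ideal $J$ (graded), then $\partial[F]$ is represented by the index class of $F^+$, namely the difference of projections
$$
\begin{pmatrix} 1-(1-F^-F^+)^2 & \cdots \\ \cdots & (1-F^+F^-)^2\end{pmatrix} - \begin{pmatrix}0&0\\0&1\end{pmatrix} \; .
$$
Alternatively, and this is what we would try first, if $F^+$ is a partial isometry modulo something, so that $1-F^-F^+$ and $1-F^+F^-$ are projections lying in $J$, then $\partial[F] = [\ker F^+] - [\ker F^-]$, both kernels being projections in $J$.

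The plan is to choose $\chi$ cleverly. On the cylinder, $\Dirac_C$ is identified, after the usual Clifford identification of $S^\pm$ with the spinors of $\Sigma$, with $\Dirac^+ = \partial_t + \Dirac_\Sigma$ and $\Dirac^- = -\partial_t+\Dirac_\Sigma$. Since $\Dirac^+_{\max}\Dirac^-_{\min} \geq c^2$, the operator $\Dirac^-_{\min}$ is injective with closed range. Moreover, $\Dirac_e^2$ has spectrum in $\{0\}\cup[c^2,\infty)$, with $0$ occurring only on $\ker(\Dirac^+_{\max})$. Taking $\chi$ with $\chi^2=1$ on $|\lambda|\ge c$, the operator $F=\chi(\Dirac_e)$ satisfies $1-F^2 = P_{\ker \Dirac_e}$, which is the projection onto $\ker(\Dirac^+_{\max})$ in the positive summand and zero in the negative one. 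Hence
$$
\Ind_\Sigma(\Dirac_C) = [P_{\ker \Dirac^+_{\max}}] \in K_0\big(D^*(\Sigma\subseteq \olC,\Sigma)^\Gamma\big) \; ,
$$
once we check that this projection lies in that ideal. That membership is guaranteed by \cref{InvolutionModBdry}, since $1-F^2$ is in the ideal.

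By \cref{KernelCylinder}, $\ker(\Dirac^+_{\max}) = \{e^{-t\Dirac_\Sigma}u_0 : u_0\in P_>L^2(\Sigma)\}$. Define the isometry $V: P_>L^2(\Sigma)\to L^2(\olC)$ by
$$
Vu_0 = (2\Dirac_\Sigma)^{1/2}e^{-t\Dirac_\Sigma}u_0 \; ;
$$
the normalisation follows from $\int_0^\infty e^{-2t\lambda}\,dt = 1/(2\lambda)$. Then $P_{\ker} = VV^*$. The remaining task is to identify $[VV^*]$ with the image of $[P_>]$ under the isomorphism of \cref{RelLocalizedAisZ}, which is the corner inclusion $\tau$ of $L^2(\Sigma)$ into $L^2(\olC)$ via a slice. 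For this we would pick a $\Gamma$-equivariant isometry
$$
W: L^2(\Sigma)\to L^2(\olC) \; , \quad W(u) = \phi(t)\otimes u \; ,
$$
with $\phi\in L^2(\RR_{\geq0})$ a unit vector supported in $[0,1]$. This $W$ covers the inclusion of $\Sigma$ up to bounded propagation and induces $\tau_*$ on K-theory, in the usual way for covering isometries. We then connect $VV^*$ to $W P_> W^*$ by a homotopy of projections in $D^*(\Sigma\subseteq\olC,\Sigma)^\Gamma$. Writing everything via functional calculus of $\Dirac_\Sigma$ on $P_>L^2(\Sigma)$, we have
$$
V = \psi_{\Dirac_\Sigma} \; , \quad \psi_\lambda(t)=\sqrt{2\lambda}\,e^{-t\lambda} \; ,
$$
a spectrally parametrised family of unit vectors in $L^2(\RR_{\ge0})$. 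We homotope $\psi_\lambda$ to $\phi$ through unit vectors, for instance via $\psi^s_\lambda(t)=s^{-1/2}\psi_{\lambda}(t/s)$, then rescale $\lambda$ into $[c,C]$, then interpolate linearly and normalise. Each $V_s$ is an isometry, and $V_sV_s^*$ is a projection.

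The main obstacle is verifying that every $V_sV_s^*$ lies in $D^*(\Sigma\subseteq\olC,\Sigma)^\Gamma$: pseudo-locality, approximate finite propagation, support near $\Sigma$, and local compactness in the interior. Support near $\Sigma$ holds only approximately because of the exponential tails, so it has to be handled by norm-approximation. Using the spectral gap $\lambda\ge c$, the tail of $\psi_\lambda$ beyond $t=R$ is uniformly small, so cutting off gives norm approximations supported near $\Sigma$. Functions $f(\Dirac_\Sigma)$ with $f$ smooth on $[c,\infty)$ with limits at infinity lie in $D^*(\Sigma)^\Gamma$ by standard results, and products with the $t$-dependence are handled by writing $V_sV_s^*$ as a norm limit of sums $\sum_k a_k(t)b_k(t')\,f_k(\Dirac_\Sigma)P_>$. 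Local compactness over the interior $\olC\setminus\Sigma$ should follow from ellipticity, since the range consists of smooth solutions, together with the decay of $1/\sqrt{\lambda}$-type factors coming from $e^{-t\lambda}$ for $t>0$. Once membership is established, homotopy invariance gives $[VV^*]=\tau_*[P_>]=\tau_*\rho^\Gamma(\Dirac_\Sigma)$, and \cref{RelLocalizedAisZ} concludes.
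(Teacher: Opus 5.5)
Your identification $\Ind_\Sigma(\Dirac_C)=[P_{\ker\Dirac^+_{\max}}]$ is correct, and so is the normalised isometry $V$ with $VV^*=P_{\ker\Dirac^+_{\max}}$. The final identification step fails, however, exactly at the point you flag as the main obstacle.

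\textbf{The endpoint is not in the ideal.} Write $p_\phi$ for the rank-one projection onto $\phi$. Then $WP_>W^*=p_\phi\otimes P_>$, and this does not lie in $D^*(\Sigma\subseteq\olC,\Sigma)^\Gamma$. Take $g=g_1\otimes g_2$ with $g_1\in C_c((0,\infty))$, $g_1\phi\neq 0$, and $0\neq g_2\in C_c(\Sigma)$. Then $\rho(g)WP_>W^*=|g_1\phi\rangle\langle\phi|\otimes\rho(g_2)P_>$. This is not compact, because $\rho(g_2)P_>$ is not: it is an order-zero pseudodifferential operator with non-vanishing principal symbol over $\supp g_2$.

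\textbf{$W$ does not map into the ideal at all.} $\Ad(W)$ sends $T\in D^*(\Sigma)^\Gamma$ to $p_\phi\otimes T$. This is locally compact over $\olC\setminus\Sigma=(0,\infty)\times\Sigma$ only when $T$ itself is locally compact. So $W$ cannot implement the isomorphism of \cref{RelLocalizedAisZ}.

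\textbf{The interpolation leaves the ideal for every $s>0$.} If $g_1$ vanishes near $t=0$, then $\rho(g_1)\psi_\lambda\to 0$ in norm as $\lambda\to\infty$. So only the $\phi$-component survives at high spectral values, and $\rho(g)V_sV_s^*\rho(g)^*$ agrees modulo compacts with a nonzero multiple of $|g_1\phi\rangle\langle g_1\phi|\otimes\rho(g_2)P_>\rho(\bar g_2)$. Your ellipticity argument only covers $s=0$. Rescaling $\lambda$ into $[c,C]$ destroys interior local compactness for the same reason. In short, membership in the ideal forces the fibrewise profiles to concentrate at $t=0$ as $\lambda\to\infty$. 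No path ending at a $\lambda$-independent profile can stay inside it.

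\textbf{The paper's last step does not help.} It deforms to $V_\Sigma P_>V_\Sigma^*$ with $(V_\Sigma v)(t,x)=R^{-1/2}v(x)$, constant in $t$. That endpoint has the same defect. The deformation is also not norm-continuous at $s=1$, since $\Dirac_\Sigma$ has unbounded spectrum. So you cannot patch your argument by following it.

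\textbf{A possible fix.} Use your $V$ itself as the comparison isometry. Put $\tilde V u=(2|\Dirac_\Sigma|)^{1/2}e^{-t|\Dirac_\Sigma|}u$ for all $u\in L^2(\Sigma)$. Then $\tilde V P_>\tilde V^*=VV^*=P_{\ker\Dirac^+_{\max}}$ on the nose, and no homotopy is needed. The work moves to checking that $\Ad(\tilde V)$ maps $D^*(\Sigma)^\Gamma$ into $D^*(\Sigma\subseteq\olC,\Sigma)^\Gamma$ and induces the identification of \cref{RelLocalizedAisZ}. This requires:
\begin{enumerate}
\item[(1)] norm-approximability of $\tilde V$ by finite-propagation operators;
\item[(2)] approximate support near $\Sigma$, which follows from $\|\rho(\mathbf{1}_{[R,\infty)\times\Sigma})\tilde V\|\leq e^{-Rc}$;
\item[(3)] compactness of $\rho(g)\tilde V$ for $g\in C_0((0,\infty)\times\Sigma)$, again from $\rho(g_1)\psi_\lambda\to 0$;
\item[(4)] compactness of $\rho(f)\tilde V-\tilde V\rho(f|_\Sigma)$ for $f\in C_0(\olC)$.
\end{enumerate}

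\textbf{A minor point on signs.} Your first displayed formula for $\partial$ has its diagonal entries complemented relative to the standard one, which has $(1-F^-F^+)^2$ in the upper left and $1-(1-F^+F^-)^2$ in the lower right. It gives $-[P_{\ker\Dirac^+_{\max}}]$, contradicting your kernel formula. Since the sign is part of the statement, commit to the second formula.
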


\begin{proof}
  As described in \cref{SectionBVPs}, the relative K-homology class of 
  $$
    \Dirac_\Omega = \Dirac = \begin{pmatrix}
    0 & \Dirac^- \\ \Dirac^+ & 0
    \end{pmatrix} = \begin{pmatrix}
    0 & -\partial_t + \Dirac_\Sigma \\ \partial_t + \Dirac_\Sigma & 0
    \end{pmatrix}
  $$ 
  can be obtained from the boundary condition
  $$
    \Dirac_e = \begin{pmatrix}
    0 & \Dirac^-_{\min} \\ \Dirac_{\max}^+ & 0
    \end{pmatrix} \; .
  $$
  Indeed, by assumption of upsc for $\Sigma$ (and hence also for $\olC$), there is $c>0$ such that $\Dirac^+_{\max}\Dirac_{\min}^-\geq c^2$ and $\Dirac_\Sigma^2\geq c^2$. Pick a normalizing function $\chi$ such that $1-\chi^2$ is supported inside $(-c,c)$. Write $\chi(\Dirac_e)=\begin{pmatrix} 0 & U^* \\ U & 0 \end{pmatrix}$. Then, 
  $$
    \begin{pmatrix}
    U^*U & 0 \\ 0 & UU^*
    \end{pmatrix} = \chi^2(\Dirac_e) = I - P_{\ker(\Dirac_e)} = \begin{pmatrix}
    I-P_{\ker(\Dirac^+_{\max})} & 0 \\ 0 & I
    \end{pmatrix}  \; .
  $$
  We conclude that $U$ is a partial isometry with kernel $\ker(\Dirac_{\max}^+)^\perp$ and trivial cokernel. Moreover, $I-\chi(\Dirac_e)^2 \in D^*(\Sigma\subseteq\olC,\Sigma)^\Gamma$, meaning that the projection $P_{\max}$ onto $\ker(\Dirac^+_{\max})$ is in $D^*(\Sigma\subseteq\olC,\Sigma)^\Gamma$. Indeed, the K-theory class $[P_{\max}] \in K_0(D^*(\Sigma\subseteq\olC,\Sigma)^\Gamma)$ represents the image of $[U] \in K_1(D^*(\olC)^\Gamma/D^*(\Sigma\subseteq\olC,\Sigma)^\Gamma)$ under the boundary map
  $$
    K_1\left(\frac{D^*(\olC)^\Gamma}{D^*(\Sigma\subseteq\olC,\Sigma)^\Gamma}\right) \xlongrightarrow{\partial} K_0\left(D^*(\Sigma\subseteq\olC,\Sigma)^\Gamma\right) \; .
  $$
  Up to the identifiction $K_0(D^*(\Sigma\subseteq\olC,\Sigma)^\Gamma)\cong K_0(D^*(\Sigma)^\Gamma)$ via \cref{RelLocalizedAisZ}, this boundary map is the secondary index map $\Ind_\Sigma$. Since $P_>$ represents $\rho^\Gamma(\Dirac_\Sigma)$, we thus have to show that $[P_{\max}] = [P_{>}]$ under this identifiction. 
  
  In \cref{KernelCylinder} it was shown that $\ker(\Dirac_{\max}^+)=\ker((\partial_t + \Dirac_\Sigma)_{\max})$ consists precisely of $u\in L^2(\olC)$ with $u(t,-)=e^{-t\Dirac_\Sigma}u_0$ and $u_0 \in P_>L^2(\Sigma)$. The remaining proof consists of turning this connection into an equality of K-theory classes. To that end, we shall first show that, up to homotopy, it suffices to consider the image of $P_{\max}$ up to some finite time $t=R$. Once this is done, we can homotope $t$ to zero, and identify the resulting projection with $P_>$.
  
  Fix some small $\varepsilon>0$. Because $P_{\max} \in D^*(\Sigma\subseteq \olC,\Sigma)^\Gamma$ is the limit of operators supported near $\Sigma$, there exists some $R\geq 0$ such that $\|P_{\max}u\|<\varepsilon\|u\|$ whenever $d(\supp(u),\Sigma) \geq R$. Then, let $P_R$ be the projection onto the subspace containing all functions $u(t,\cdot)=\psi_R\cdot e^{-t\Dirac_\Sigma}u_0$ for $u_0\in P_>L^2(\Sigma)$, where $\psi_R=\mathbf{1}_{[0,R]}(t)$. Then 
  $$
    \|(P_{\max} - P^R)u\|=\|P_{\max}u\|<\varepsilon\|u\|
  $$
  if $d(\supp(u),\Sigma)\geq R$. Suppose on the other hand that $\supp(u)\subseteq [0,R]\times\Sigma$. Write $u=\psi_R e^{-t\Dirac_\Sigma}u_0 + u^\perp$, where $u^\perp$ is uniquely determined by the requirement that $\langle \psi_R e^{-t\Dirac_\Sigma} v_0,u^\perp\rangle=0$ for all $v_0\in P_>L^2(\Sigma)$. Note that in particular $u^\perp=\psi_R u^\perp$, so that it follows that $\langle e^{-t\Dirac_\Sigma} v_0,u^\perp\rangle=0$ for all $v_0\in P_>L^2(\Sigma)$ as well. Thus
  \begin{align*}
    \left\| (P_{\max} - P^R)u \right\| &= \left\| (P_{\max} - I)\psi_R e^{-t\Dirac_\Sigma}u_0 \right\| \\
    &= \left\| P_{\max} e^{-t\Dirac_\Sigma}u_0 - P_{\max}\left( (1-\psi_R)e^{-t\Dirac_\Sigma}u_0 \right) - \psi_R e^{-t\Dirac_\Sigma}u_0 \right\| \\
    &= \left\| e^{-t\Dirac_\Sigma}u_0 - P_{\max}\left( (1-\psi_R)e^{-t\Dirac_\Sigma}u_0 \right) - \psi_R e^{-t\Dirac_\Sigma}u_0 \right\| \\
    &\leq 2\cdot \left\| (1-\psi_R)e^{-t\Dirac_\Sigma}u_0 \right\| \; .
  \end{align*}
  It holds that 
  $$
  \frac{\varepsilon^2}{4} \int_{t=0}^R e^{-2t\lambda} \, dt \geq \int_{t=R}^\infty e^{-2t\lambda} \, dt
  $$
  for all $\lambda\geq c$ whenever $R$ is sufficiently large, say $R\gsim \log(\varepsilon^{-2}) c^{-1}$. Letting $E_\lambda$ denote the spectral measure of $\Dirac_\Sigma$, we go on to compute that
  \begin{align*}
  \left\| (1-\psi_R)e^{-t\Dirac_\Sigma}u_0 \right\|^2 &=  \int_{t=R}^\infty \left\| e^{-t\Dirac_\Sigma}u_0 \right\|^2 \, dt \\
  &= \int_{t=R}^\infty \left( \int_{\lambda=c}^\infty e^{-t\lambda} d\langle E_\lambda u_0,u_0\rangle \right) \, dt \\
  &= \int_{\lambda=c}^\infty \left( \int_{t=R}^\infty e^{-t\lambda}   \, dt \right) d\langle E_\lambda u_0,u_0\rangle \\
  &\leq \frac{\varepsilon^2}{4} \int_{\lambda=c}^\infty \left( \int_{t=0}^R e^{-t\lambda}   \, dt \right) d\langle E_\lambda u_0,u_0\rangle \\
  &= \frac{\varepsilon^2}{4} \left\| \psi_R e^{-t\Dirac_\Sigma}u_0 \right\|^2 \\
  &\leq \frac{\varepsilon^2}{4} \|u\|^2 \; .
  \end{align*}
  The interchange of order of integration is justified by Tonelli's theorem. Thus
  $$
  \left\| (P_{\max} - P^R)u \right\| \leq \varepsilon \|u\| \; .
  $$
  Overall, we conclude that $P_{\max}$ and $P_R$ are projection whose difference has norm strictly less than $1$, so that they are homotopic (see for example \cite[Proposition 4.1.7]{HigsonRoe2000}).

  Let $\olC_R=[0,R]\times \Sigma$, and denote by $V_R$ the inclusion map $L^2(\olC_R)\hookrightarrow L^2(\olC)$. Also consider the isometry $V_\Sigma: \, L^2(\Sigma) \to L^2(\olC_R)$, $(V_\Sigma v)(t,x)=R^{-1/2}v(x)$. The conjugation maps corresponding to these isometries induce the isomorphisms\footnote{More precisely, the isometry $V_\Sigma$ uniformly covers the inclusion $\Sigma \hookrightarrow [0,R]\times\Sigma$, and $V_R$ uniformly covers $[0,R]\times\Sigma \hookrightarrow \olC$. For the sake of brevity, we have eschewed uniform covering isometries in our discussion of functoriality of (relative) structure algebras, so the reader is directed to \cite[~Section 4.3]{Siegel2012}.}
  $$
  K_*\left(D^*(\Sigma)^\Gamma \right) \xlongrightarrow{\sim} K_*\left( D^*(\Sigma\subseteq\olC_R,\Sigma)^\Gamma \right) \xlongrightarrow{\sim} K_*\left( D^*(\Sigma \subseteq \olC,\Sigma)^\Gamma \right) \; .
  $$
  Observe that $P_R$ is in the image of $\Ad(V_R)$. Namely, it is the image of the projection onto the subspace of $L^2(\olC_R)$ containing all functions $u(t,\cdot)=e^{-t\Dirac_\Sigma}u_0$, $u_0 \in P_>H_\Sigma$, $t\in [0,R]$. Denote that projection by $P_R$ as well. Now, there is a homotopy of projections $P_R^s$, $s\in[0,1]$, where $P_R^s$ projects onto the subspace of $H_R$ containing all functions $u(t,\cdot)=e^{-(1-s)t\Dirac_\Sigma}u_0$, $u_0 \in\im(P_0)$. Then, $P_R^0=P_R$, while $P_R^1=V_\Sigma P_> V^*_\Sigma$. Because $V_\Sigma$ and $V_R$ induce isomorphisms, we conclude that
  $$
  [P_{\max}]=[P_R]= [P_>] \; \in \; K_*\left(D^*(\Sigma)^\Gamma \right) \cong K_*\left( D^*(\Sigma \subseteq \olC,\Sigma)^\Gamma \right) \; .
  $$
  Since we have already noted that the former projection represents $\Ind_\Sigma(\Dirac)$ and the latter $\rho^\Gamma(\Dirac_\Sigma)$, this concludes the proof.
\end{proof}

From here, we apply our understanding of the cylinder to the general case. Recall that we formed the manifold $M$ by attaching the cylinder $\Sigma_\infty = \RR_{\leq 0}\times \Sigma$ to the boundary of $\olO$. We let $\psi_{\olO}$ and $\psi_{\Sigma_\infty}$ denote the respective normalizing functions. Moreover, we fix a characteristic function $\chi$ with $\chi^2(\lambda) = 1$ for $|\lambda|\geq c$.

\begin{lem}\label{decomp}
  Assume that $\Sigma$ has uniformly positive scalar curvature.
  \begin{itemize}
      \item[(i)] $\psi_{\olO} \chi(\Dirac_M) \psi_{\Sigma_\infty}$ and $\psi_{\Sigma_\infty} \chi(\Dirac_M) \psi_{\olO}$ are in $D^*(\Sigma\subseteq M,\Sigma)^\Gamma$.
      \item[(ii)] $\psi_{\olO}\chi(\Dirac_M)\psi_{\olO}$ represents $[\Dirac_\Omega] \in K^\Gamma_0(\olO,\Sigma)$.
      \item[(iii)] $\psi_{\Sigma_\infty}\chi(\Dirac_M)\psi_{\Sigma_\infty}$ represents the class of $\Dirac_{\Sigma_\infty}$ in $K^\Gamma_0(D^*(\Sigma_\infty)^\Gamma/D^*(\Sigma\subseteq\Sigma_\infty,\Sigma)^\Gamma)$.
      \item[(iv)] If $\olO$ has uniformly positive scalar curvature, then $\psi_{\olO}\chi(\Dirac_M)\psi_{\olO}$ represents the class of $\Dirac_\Omega$ in $K^\Gamma_0(D^*(\olO)^\Gamma/D^*(\Sigma\subseteq\olO,\Sigma)^\Gamma)$.
  \end{itemize}
\end{lem}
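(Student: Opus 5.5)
The plan is to work throughout with normalizing functions whose distributional Fourier transform has compact support, and to pass to the given $\chi$ (with $\chi^2=1$ outside $(-c,c)$) only at the end, by norm approximation. For such a $\chi_r$ with $\widehat{\chi_r}$ supported in $[-r,r]$, finite propagation of the wave operators $e^{is\Dirac_M}$ shows that $\chi_r(\Dirac_M)$ has propagation at most $r$. Moreover, $\chi_r(\Dirac_M)\rho(f)$ depends only on $\Dirac_M$ restricted to the $r$-neighbourhood of $\supp f$. Since the product metric makes $\Dirac_M$ equal to $\Dirac_\Omega$ on $\Omega$ and to the cylinder operator on $\Sigma_\infty$, this unit propagation argument will be the main tool for all four items. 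The operators $\chi_r(\Dirac_M)$ are also pseudo-local and equivariant, so $\chi_r(\Dirac_M)\in D^*(M)^\Gamma$. Since $\chi_r\to\chi$ uniformly can be arranged, every norm-closed statement for $\chi_r$ passes to $\chi$.

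For (i), let $A=\olO$ and $B=\Sigma_\infty$, and write $\psi_A=\mathbf{1}_A$, $\psi_B=\mathbf{1}_B$. Then $\psi_A\chi_r(\Dirac_M)\psi_B$ has propagation at most $r$, and its support lies within distance $r$ of $A\cap B=\Sigma$, so it is supported near $\Sigma$. It is pseudo-local, since $[\rho(g),\psi_A\chi_r\psi_B]=\psi_A[\rho(g),\chi_r]\psi_B$. It is locally compact off $\Sigma$: for $f\in C_0(M\setminus\Sigma)$ we have $\rho(f)\psi_A\chi_r\psi_B=\psi_A[\rho(f),\chi_r]\psi_B+\psi_A\chi_r\rho(f)\psi_B$. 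The first term is compact. For the second, split $f=f_A+f_B$ by the open pieces; the part $\rho(f_A)\psi_B$ vanishes, and the remaining term $\psi_A\chi_r\rho(f_B)\psi_B=\psi_A\rho(f_B)\chi_r\psi_B+\text{compact}$ is compact because $\psi_A\rho(f_B)=0$. The right-hand version is symmetric. Equivariance is clear, so both corners lie in $D^*(\Sigma\subseteq M,\Sigma)^\Gamma$, and by taking the norm limit $r\to\infty$ the same holds for $\chi$.

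For (ii), I first check that $F_A:=\psi_A\chi(\Dirac_M)\psi_A$ is a relative Fredholm module over $(\olO,\Sigma)$. Pseudo-locality is clear. For the involution property, note that $\psi_A\chi^2\psi_A=F_A^2+(\psi_A\chi\psi_B)(\psi_B\chi\psi_A)$, and the last term is locally compact over $\Omega$ by (i); since $1-\chi(\Dirac_M)^2$ is locally compact, $1-F_A^2$ is locally compact over $\Omega$. Under excision, the class of this module is its restriction to $\Omega$. On $\Omega$ I then compare $F_A$ with $\chi(\Dirac_e)$ from the relative cycle in \cref{SectionBVPs}. For $f\in C_c(\Omega)$ at distance greater than $r$ from $\Sigma$, unit propagation gives $\chi_r(\Dirac_M)\rho(f)=\chi_r(\Dirac_e)\rho(f)$, because both operators agree with $\Dirac_\Omega$ near $\supp f$. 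A partition-of-unity argument then shows that the two modules restricted to $\Omega$ differ by a locally compact perturbation over $\Omega$. Operator homotopy in $\chi$ and independence of $\chi$ (the cited Prop.~6.8 of \cite{Engel2026}) then give $[F_A]=[\Dirac_\Omega]$.

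Item (iii) is the same argument on $\Sigma_\infty$, carried out one level finer. Here $\Sigma_\infty$ has upsc, so $1-\chi(\Dirac_M)^2\in C^*(\olO\subseteq M)^\Gamma$. Compressing by $\psi_B$ and using (i), I obtain $1-F_B^2\in D^*(\Sigma\subseteq\Sigma_\infty,\Sigma)^\Gamma$. The class of $\Dirac_{\Sigma_\infty}$ in the quotient is the one defined via the boundary condition $\Dirac_e$ on the reflected cylinder, as in \cref{FormulaCylinder} and \cref{InvolutionModBdry}. I then need $F_B-\chi(\Dirac_{e,\Sigma_\infty})\in D^*(\Sigma\subseteq\Sigma_\infty,\Sigma)^\Gamma$. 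For $\chi_r$ this follows from unit propagation: the difference vanishes on sections supported at distance greater than $r$ from $\Sigma$, and it is pseudo-local and locally compact off $\Sigma$ by the same commutator tricks as in (i). Item (iv) is identical, with $\olO$ in place of $\Sigma_\infty$, using \cref{InvolutionModBdry} for $\Dirac_e$ on $\olO$. The main obstacle is the approximation step. The $\chi_r$ cannot satisfy $\chi_r^2=1$ off $(-c,c)$, so the involution-mod-ideal statements in (iii) and (iv) hold for $\chi$ but not literally for $\chi_r$. The difference statements, however, are closed conditions, so they pass to the norm limit $\chi$. I would therefore prove membership of the differences in the ideal for $\chi_r$, take the limit, and only then invoke the involution property for $\chi$ itself, so that both operators define the same $K_1$-class of the quotient.
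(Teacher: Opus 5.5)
Your proposal is correct. For (i), (iii) and (iv) it follows essentially the paper's route. You approximate $\chi$ by normalizing functions with compactly supported Fourier transform and use unit propagation to see that the relevant operators are supported near $\Sigma$. In (iii)/(iv) you show that $\psi_{\Sigma_\infty}\chi(\Dirac_M)\psi_{\Sigma_\infty}-\chi(\Dirac_{\Sigma_\infty,e})$ (resp.\ the analogue on $\olO$) lies in the ideal, so that \cref{InvolutionModBdry} transfers the class.

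There are two differences from the paper. In (i) you obtain local compactness off $\Sigma$ from pseudo-locality alone, by splitting $f\in C_0(M\setminus\Sigma)$ along the two disjoint open pieces. The paper instead uses a normalizing function of propagation smaller than $d(\supp f,\Sigma)$ together with $\chi(\Dirac_M)-\chi'(\Dirac_M)\in C^*(M)^\Gamma$. Your argument is cleaner and works for any pseudo-local operator in the middle. In (ii) you compare $\psi_{\olO}\chi(\Dirac_M)\psi_{\olO}$ directly with the boundary-condition representative $\chi(\Dirac_e)$ modulo $\fD(\olO,\Sigma)^\Gamma$. The paper identifies the class with the restriction of $[\Dirac_M]$ to $\Omega$ and quotes Higson--Roe's Prop.~10.8.8 plus excision. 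The paper's route is shorter. Yours has the merit that one comparison argument serves (ii), (iii) and (iv). Your explicit norm-limit step in (iii) also makes precise the paper's terse remark that one ``may replace $\chi$'' modulo the ideal.

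Three justifications should be stated more carefully.
\begin{enumerate}
\item The ``partition-of-unity argument'' in (ii) needs, for each $f\in C_c(\Omega)$, a normalizing function $\chi_\delta$ with Fourier support in $(-\delta,\delta)$ and $\delta<d(\supp f,\Sigma)$. This is small propagation, the opposite of the $r\to\infty$ approximation in your first paragraph. It also needs the fact that $\chi-\chi_\delta\in C_0(\RR)$. Then $(\chi-\chi_\delta)(\Dirac_M)\rho(f)$ and $(\chi-\chi_\delta)(\Dirac_e)\rho(f)$ are compact, the latter by interior elliptic regularity and Rellich. Without this, a single fixed $r$ gives no information about $f$ close to $\Sigma$.
\item In (iii) and (iv), local compactness off $\Sigma$ of $\psi_{\Sigma_\infty}\chi_r(\Dirac_M)\psi_{\Sigma_\infty}-\chi_r(\Dirac_{\Sigma_\infty,e})$ comes from this same (ii)-type argument. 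It does not come from the commutator trick of (i), which only handles the off-diagonal corners.
\item Finite propagation of $\chi_r(\Dirac_e)$, which you need for the difference to be a generator of the ideal, relies on finite propagation speed for $\Dirac_e$. This holds because both the minimal and the maximal domain are preserved under multiplication by smooth functions with bounded gradient.
\end{enumerate}
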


Statement and proof of this lemma have significant overlap with \cite[Section~4.2]{Piazza2014}.

\begin{proof}
  Ad (i): We prove the claim for $\psi_{\olO} \chi(\Dirac_M) \psi_{\Sigma_\infty}$, that for $\psi_{\Sigma_\infty} \chi(\Dirac_M) \psi_{\olO}$ then follows by taking the adjoint. Approximating $\chi$ by normalizing functions with compactly supported Fourier transforms, one may realize $\psi_{\olO} \chi(\Dirac_M) \psi_{\Sigma_\infty}$ as the limit of finite-propagation operators supported near $\Sigma$. To see that $\psi_{\olO} \chi(\Dirac_M) \psi_{\Sigma_\infty}$ is locally compact over $M\setminus \Sigma$, let $f\in C_0(M\setminus\Sigma)$. We may assume that $\supp(f)$ has some positive distance $\delta>0$ from $\Sigma$. For any normalizing function $\chi'$ with $\chi'(\Dirac_M)$ having propagation $<\delta$, it holds that $\left(\psi_{\olO} \chi'(\Dirac_M) \psi_{\Sigma_\infty}\right) f=0$. On the other hand, $\chi(\Dirac_M)$ and $\chi'(\Dirac_M)$ differ by an operator in $C^*(M)^\Gamma$, see e.g. \cite[Lemma 10.6.3]{HigsonRoe2000}. Thus $\left(\psi_{\olO} \chi(\Dirac_M) \psi_{\Sigma_\infty}\right) f$ is compact. Analogously, $f\left(\psi_{\olO} \chi'(\Dirac_M) \psi_{\Sigma_\infty}\right)$ is also compact, whence we conclude $\psi_{\olO} \chi(\Dirac_M) \psi_{\Sigma_\infty}\in D^*(\Sigma\subseteq M,\Sigma)^\Gamma$.

  Ad (ii): The K-homology class of $\psi_{\olO}\chi(\Dirac_M)\psi_{\olO}$ represents the image of $[\Dirac_M]$ under the map restriction map $K_0^\Gamma(M) \to K_0^\Gamma(\Omega)\cong K^\Gamma_0(\olO,\Sigma)$. Hence, the claim follows from \cite[Prop.~10.8.8]{HigsonRoe2000}.

  Ad (iii): Analogously to (ii), $\psi_{\Sigma_\infty}\chi(\Dirac_M)\psi_{\Sigma_\infty}$ represents the class of $[\Dirac_{\Sigma_\infty}]$ in $K_0^\Gamma(\Sigma_\infty,\Sigma)$. Represent the latter by $\chi(\Dirac_{\Sigma_\infty,e})$ with $\Dirac_{\Sigma_\infty,e}$ the boundary condition \eqref{bc} for $\Dirac_{\Sigma_\infty}$. The operator $\chi(\Dirac_{\Sigma_\infty,e})$ is invertible modulo $D^*(\Sigma\subseteq\Sigma_\infty,\Sigma)^\Gamma$ by \cref{InvolutionModBdry}, hence it suffices to prove its difference from $\psi_{\Sigma_\infty}\chi(\Dirac_M)\psi_{\Sigma_\infty}$ to lie in $D^*(\Sigma\subseteq\Sigma_\infty,\Sigma)^\Gamma$. This can be done analogously to (i) by noting that modulo $D^*(\Sigma\subseteq\Sigma_\infty,\Sigma)^\Gamma$, we may replace $\chi$ by a normalizing function with compactly supported Fourier transform, which clearly leads to operators supported near $\Sigma$.

  Claim (iv) is proved entirely analogously to (iii).
\end{proof}

Now, all that remains to be done is assemble the pieces.

\begin{proof}[Proof of \cref{RelIndRho}]
  We focus on the proof of (ii). By \cref{decomp}(i) we may write
  $$
  \chi(\Dirac_M) = \begin{pmatrix}
  \psi_{\olO} \chi(\Dirac_M) \psi_{\olO} & 0 \\ 0 & \psi_{\Sigma_\infty} \chi(\Dirac_M) \psi_{\Sigma_\infty}
  \end{pmatrix} \; \mathrm{mod} \; D^*(\Sigma\subseteq M , \Sigma)^\Gamma \; .
  $$
  Here, $\psi_{\olO} \chi(\Dirac_M) \psi_{\olO} \in D^*(\olO)^\Gamma$ is an involution modulo $D^*(\olO,\Sigma)$, and $\psi_{\Sigma_\infty} \chi(\Dirac_M) \psi_{\Sigma_\infty} \in D^*(\Sigma_\infty)^\Gamma$ an involution modulo $D^*(\Sigma \subseteq \Sigma_\infty,\Sigma)^\Gamma$. Consider the commutative diagram\\
  \centerline{\xymatrix{
  K_{1}\left(\frac{D^*(\olO)^\Gamma}{D^*(\olO,\Sigma)^\Gamma}\right)\ar[d]_{\partial}\ar@{}[r]|{\oplus} & K_{1}\left(\frac{D^*(\Sigma_\infty)^\Gamma}{D^*(\Sigma\subseteq\Sigma_\infty,\Sigma)^\Gamma}\right)\ar[d]^{\partial} \ar[r] & K_{1}\left(\frac{D^*(M)^\Gamma}{D^*(\olO\subseteq M,\Sigma)^\Gamma}\right)\ar[d]^{\partial} \\
  K_{0}\left(D^*(\olO,\Sigma)^\Gamma\right)\ar@{}[r]|{\oplus} & K_{0}\left(D^*(\Sigma\subseteq \Sigma_\infty,\Sigma)^\Gamma\right) \ar[r] & K_{0}\left(D^*(\olO\subseteq M,\Sigma)^\Gamma\right)
  }} 
  where the horizontal arrows are the sums of the respective inclusion maps. Note that the first vertical arrow is precisely the secondary uniform index map. Up to identifying $K_{0}(D^*(\Sigma\subseteq \Sigma_\infty,\Sigma)^\Gamma) \cong K_0(D^*(\Sigma)^\Gamma)$, the second is the index map $\Ind_\Sigma$. The third vertical arrow maps the class of $\chi(\Dirac_M) \in D^*(M)^\Gamma$ to the image of $\Ind_\mathrm{APS}(\Dirac_M)$ under the inclusion map $C^*(\olO\subseteq M) \hookrightarrow D^*(\olO\subseteq M,\Sigma)$. Moreover, we may identify $K_{0}\left(D^*(\olO\subseteq M,\Sigma)^\Gamma\right)$ with $K_{0}\left(D^*(\olO,\Sigma)^\Gamma\right)$ by \cref{RelLocalized}. Appealing to \cref{decomp} (ii) and (iii), we thus find that
  $$
  \Ind_\mathrm{APS}(\Dirac_M) = \secInd \left( \Dirac_\Omega \right) + \Ind_\Sigma \left( \Dirac_{\Sigma_\infty} \right) \; ,
  $$
  To conclude the proof, we merely note that $\Ind_\Sigma( \Dirac_{\Sigma_\infty}) = - \rho^\Gamma(\Dirac_\Sigma)$ by \cref{FormulaCylinder}, where the sign comes from the fact that $\Sigma_\infty = (-\infty,0]\times\Sigma$ carries the opposite orientation from the cylinder $\olC=[0,\infty) \times \Sigma$ considered in that lemma.

  By using that if $\olO$ has uniformly positive scalar curvature, $\psi_{\olO} \chi(\Dirac_M) \psi_{\olO} \in D^*(\olO)^\Gamma$ is actually an involution modulo $D^*(\Sigma\subseteq\olO,\Sigma)$, and appealing to \cref{decomp} (iv), claim (i) is proved entirely analogously.
\end{proof}

\printbibliography[heading=bibintoc]

\end{document}